\def\rset{\mathbb R}
\def\zset{\mathbb Z}
\def\eqsp{\;}
\newcommand{\pscal}[2]{\left\langle#1,#2\right\rangle}
\newcommand{\KL}[2]{\mathsf{KL}\left(#1 \vert #2\right)}
\newcommand{\eqdef}{\ensuremath{\stackrel{\mathrm{def}}{=}}}
\def\Xset{\mathcal{X}} % Espace d 'etat
\def\Zset{\mathcal{Z}} % Espace d 'etat
\def\F{\mathcal{F}} % filtration
\def\B{\mathcal{B}} % filtration
\def\cB{\mathsf{B}} % filtration
\def\e{\mathcal{E}}
\def\E{\mathbb{E}}
\def\M{\mathcal{M}}
\def\G{\mathcal{G}}
\def\A{\mathcal{A}}
\def\H{\mathcal{H}}
\newcommandx\sequence[3][2=t,3=\zset]
\def\PP{\mathbb{P}} % proba
\newcommand{\CPP}[3][]
{\ifthenelse{\equal{#1}{}}{{\mathbb P}\left(\left. #2 \, \right| #3 \right)}{{\mathbb P}_{#1}\left(\left. #2 \, \right | #3 \right)}}
\def\PE{\mathbb{E}} % esperance
\newcommand{\CPE}[3][]
{\ifthenelse{\equal{#1}{}}{{\mathbb E}\left[\left. #2 \, \right| #3 \right]}{{\mathbb E}_{#1}\left[\left. #2 \, \right | #3 \right]}}
\def\L{\mathcal{L}} % espace des fonctions
\def\tv{\mathrm{tv}}
\def\Cset{\mathcal{C}} % Petite set
\def\q{\textsf{q}}
\def\r{\textsf{r}}
\def\S{\mathcal{S}}
\theoremstyle{plain}
\newtheorem{theorem}{Theorem}
\newtheorem{assumption}{H\hspace{-3pt}}
\newtheorem{assumptionB}{B\hspace{-3pt}}
\newtheorem{assumptionC}{C\hspace{-3pt}}
\newaliascnt{proposition}{theorem}
\newaliascnt{lemma}{theorem}
\newtheorem{lemma}[lemma]{Lemma}
\newaliascnt{corollary}{theorem}
\newtheorem{corollary}[corollary]{Corollary}
\theoremstyle{definition}
\newaliascnt{definition}{theorem}
\newtheorem{algorithm}{Algorithm}
\newaliascnt{remark}{theorem}
\newtheorem{remark}[remark]{Remark}
\newaliascnt{example}{theorem}
\newtheorem{example}[example]{Example}
\def\rmd{\mathrm{d}}
\def\1{\mathbbm{1}}
\DeclareMathOperator*{\argmin}{{\mathsf{Argmin}}}
\DeclareMathOperator*{\argmax}{{\mathsf{Argmax}}}
\begin{document}

\title[Large-scale Quasi-Bayesian inference with spike-and-slab priors]{An approach to large-scale Quasi-Bayesian inference with spike-and-slab priors}\thanks{This work is partially supported by the NSF grant DMS 1513040}

\author{Yves Atchad\'e}\thanks{ A. Y. Atchad\'e: Boston University, 111 Cummington Mall, Boston, 02215 MA, United States. {\em E-mail address:} atchade@bu.edu}
\author{Anwesha Bhattacharyya}\thanks{ A. Bhattacharyya: University of Michigan, 1085 South University, Ann Arbor,
  48109, MI, United States. {\em E-mail address:} anwebha@umich.edu}

\subjclass[2010]{62F15, 62Jxx}

\keywords{High-dimensional Bayesian inference,  Variable selection, Posterior contraction, Bernstein-von Mises approximation, Variational approximations, Graphical models, Sparse principal component analysis}

\maketitle

\begin{center} (Aug. 2019) \end{center}

\begin{abstract}
We propose a general framework using spike-and-slab prior distributions to aid with the development of high-dimensional Bayesian inference. Our framework allows inference with  a general quasi-likelihood function. We show that highly efficient and scalable Markov Chain Monte Carlo (MCMC) algorithms can be easily constructed to sample from the resulting quasi-posterior distributions.

We study the large scale behavior of the resulting quasi-posterior distributions as the dimension of the parameter space grows, and we establish several convergence results. In large-scale applications where computational speed is important, variational approximation methods are often used to approximate posterior distributions.  We show that  the contraction behaviors of the quasi-posterior distributions can be exploited  to  provide theoretical guarantees  for  their variational approximations.  We illustrate the theory with some simulation results from Gaussian graphical models, and sparse principal component analysis.
\end{abstract}

\setcounter{secnumdepth}{3}

\section{Introduction}\label{sec:intro}
We consider the problem of estimating a $p$-dimensional parameter using a dataset $z\in\Zset$, and a likelihood or quasi-likelihood function $\ell:\;\rset^p\times \Zset\to\rset$, where $\Zset$ denote a sample space equipped with a reference sigma-finite measure $\rmd z$. We assume that the quasi-likelihood function $(\theta,z)\mapsto \ell(\theta,z)$ is a  jointly measurable function on $\rset^p\times \Zset$, and thrice differentiable in the parameter $\theta$ for any $z\in\Zset$. We take a Bayesian approach with a spike-and-slab prior for $\theta$. The prior requires the  introduction of a new parameter $\delta\in \Delta\eqdef\{0,1\}^p$ with prior  distribution  $\{\omega(\delta),\;\delta\in\Delta\}$ which can be used for variable selection. The components of $\theta$ are  then assumed to be conditionally independent given $\delta$, and $\theta_j\vert \delta$ has a mean zero Gaussian distribution with precision parameter $\rho_1>0$  if $\delta_j=1$ (slab prior),  or a mean zero Gaussian distribution with precision parameter $\rho_0>0$ if $\delta_j=0$ (spike prior). Spike-and-slab priors have been popularized by the seminal works  \cite{mitchell:beauchamp:88,george:mcculloch97} among others. Versions with a point-mass at the origin are known to have several optimality properties in high-dimensional problems (\cite{johnstone:04,castillo:etal:12,castillo:etal:14,atchade:15b}),  but are computationally difficult to work with. In this work we follow \cite{george:mcculloch97,narisetti:he:14} and others, and  replace the point-mass at the origin by a small-variance Gaussian distribution. %We focus mainly on the Gaussian distribution for computational convenience.  
We then propose to study the following quasi-posterior distribution  on $\Delta\times \rset^p$,
\begin{equation}\label{post:Pi}
\Pi(\delta,\rmd\theta\vert z) \propto  e^{\ell(\theta_\delta,z)} \omega(\delta) \left(\frac{\rho_1}{2\pi}\right)^{\frac{\|\delta\|_0}{2}} \left(\frac{\rho_0}{2\pi}\right)^{\frac{p-\|\delta\|_0}{2}} e^{-\frac{\rho_1}{2}\|\theta_\delta\|_2^2}  e^{-\frac{\rho_0}{2}\|\theta-\theta_\delta\|_2^2}\rmd\theta,
\end{equation}
assuming that it is well-defined, where for $\theta\in\rset^p$, and $\delta\in\Delta$, $\theta_\delta$ denote their componentwise product. A distinctive feature of (\ref{post:Pi}) is that we have also replaced the quasi-likelihood $\ell(\theta;z)$ by a sparsified version $\ell(\theta_\delta;z)$. In other words, even if $\ell$ is a standard log-likelihood, (\ref{post:Pi}) would still be different from the Gaussian-Gaussian spike-and-slab posterior distribution of \cite{george:mcculloch97,narisetti:he:14}. To the best of our knowledge this sparsification trick has not been explored in the literature. It has the effect of  bringing (\ref{post:Pi}) closer to the point-mass spike-and-slab posterior distribution  in terms of statistical performance, while  at the same time providing tremendous computational speed as we will see.

By working with a general quasi-likelihood function this work also contributes to a growing Bayesian literature where  non-likelihood functions are combined with prior distributions for  the sake of tractability and scalability (\cite{chernozhukov:hong03,jiang:tanner08,liao:jiang:11,yang:he:2012,kato:13,li:jiang:14,atchade:15b,atchade:15:c}). Non-likelihood functions (also known as quasi-likelihood,  pseudo-likelihood or composite likelihood functions) are routine in frequentist statistics, particular to deal with large scale problems (\cite{meinshausen06,zou:etal:06,shen:huang:08,ravikumaretal10,varin:etal:11,lei:vu:15}).  In semi/non-parametric statistics and econometrics, the idea is closely related to moments restrictions inference (\cite{ichumura:93,chernozhukov:etal:07,atchade:15:c}). 

At a high-level, our main contribution  can be described as follows: given a  log-quasi-likelihood function $\ell$ and a random sample $Z$ such that $\ell(\cdot; Z)$ is (locally) strongly concave with maximizer located near some parameter value of interest $\theta_\star\in\rset^p$, we show that the distribution  (\ref{post:Pi}) puts most of its probability mass around $(\delta_\star,\theta_\star)$, where $\delta_\star$ is the support of $\theta_\star$.  Precise statements can be found in Theorem \ref{thm:0} and Theorem \ref{thm1}. The parameter value $\theta_\star$ is typically (but not necessarily) defined as  the maximizer of the population version of the log-quasi-likelihood function:
\[\theta_\star = \argmax_{\theta\in\rset^p} \;\PE_\star\left[\ell(\theta;Z)\right].\]
We use Theorem \ref{thm:0} to argue in Section \ref{subsec:algo1} that the sparcification trick used in (\ref{post:Pi})  significantly speeds up MCMC computation compared to the state of the art. 

For sufficiently strong signal $\theta_\star$, we show that $\Pi$ actually behaves like a product of a point mass at $\delta_\star$ and the Gaussian approximation of the conditional distribution of $\theta$ given $\delta=\delta_\star$ in $\Pi$ (Bernstein-von Mises approximation). Precise statements can be found in Theorem \ref{thm:KL}. The results have implications for  variational approximation methods, and  as an application of the main results, we derive some  sufficient conditions under which  variational approximations of $\Pi$ are consistent. We illustrate the theory with  examples from Gaussian graphical models (Section \ref{sec:lin:reg}), and sparse principal component analysis (Section \ref{sec:spca}).

The paper is organized as follows. We study the sparsity and statistical properties of $\Pi$ in Section \ref{sec:sparsity} and \ref{sec:contraction} respectively. The Bernstein-von  Mises theorem and the behavior of their variational approximations are considered in Section \ref{sec:post:approx}.  We illustrate these results by considering the problem of inferring Gaussian graphical models in Section \ref{sec:lin:reg}, and sparse principal component estimation in Section \ref{sec:spca}. All the proofs are collected in the appendix.

\subsection{Notation}
Throughout we equip the Euclidean space $\rset^p$ ($p\geq 1$ integer) with its usual Euclidean inner product $\pscal{\cdot}{\cdot}$ and norm $\|\cdot\|_2$, its Borel sigma-algebra, and its Lebesgue measure. All vectors $u\in\rset^p$ are column-vectors unless stated otherwise. We also use the following norms on $\rset^p$: $\|\theta\|_1\eqdef \sum_{j=1}^p|\theta_j|$, $\|\theta\|_0\eqdef\sum_{j=1}^p \textbf{1}_{\{|\theta_j|>0\}}$, and $\|\theta\|_\infty\eqdef \max_{1\leq j\leq p}|\theta_j|$.

We set $\Delta\eqdef\{0,1\}^p$. For $\theta,\theta'\in\rset^p$, $\theta\cdot\theta'\in\rset^p$ denotes the component-wise product of $\theta$ and $\theta'$. For $\delta\in\Delta$, we set $\rset^p_\delta\eqdef\{\theta\cdot\delta:\,\theta\in\rset^p\}$, and we write $\theta_\delta$ as a short for $\theta\cdot\delta$. 
%We define $\delta^c\eqdef 1-\delta$, that is $\delta_j^c=1-\delta_j$, $1\leq j\leq p$. For a matrix $A\in\rset^{p\times p}$ and $\delta\in\Delta$, $A_\delta$ (resp. $A_{\delta^c}$) denotes the matrix of $\rset^{\|\delta\|_0\times \|\delta\|_0}$ (resp. $\rset^{(p-\|\delta\|_0)\times(p-\|\delta\|_0)}$) obtained by keeping only the rows and columns of $A$ for which $\delta_j=1$ (resp. $\delta_j=0$). 
For $\delta,\delta'\in\Delta$, we write $\delta\supseteq\delta'$ to mean that for any $j\in\{1,\ldots,p\}$, whenever $\delta_j'=1$, we have $\delta_j=1$. Given $\theta\in\rset^p$, and $\delta\in\Delta\setminus\{0\}$, we write $[\theta]_\delta$ to denote the  $\delta$-selected components of $\theta$ listed in their order of appearance: $[\theta]_\delta=(\theta_j,\;j\in\{1\leq k\leq p:\;\delta_k=1\})\in\rset^{\|\delta\|_0}$. Conversely, if $u\in\rset^{\|\delta\|_0}$, we write $(u,0)_\delta$ to denote the element of $\rset^p_\delta$ such that $[(u,0)_\delta]_\delta=u$.
%If $\|\delta\|_0=p$ we convene that $\det([A]_{\delta^c})=1$.

%Throughout the paper $e$ denotes the Euler number, and ${m \choose q}$ is the combinatorial number $m!/(q!(m-q)!)$. For $x\in\rset$, $\textsf{sign}(x)$ is the sign of $x$ ($\textsf{sign}(x)=1$ if $x>0$, $\textsf{sign}(x)=-1$ if $x<0$, and $\textsf{sign}(x)=0$ if $x=0$). 

If $f(\theta,x)$ is a real-valued function that depends on the parameter $\theta$ and some other argument $x$, the notation $\nabla^{(k)} f(\theta,x)$, where $k$ is an integer, denotes the $k$-th partial derivative with respect to $\theta$ of the map $(\theta,x)\mapsto f(\theta,x)$, evaluated at $(\theta,x)$. For $k=1$, we write $\nabla f(\theta,x)$ instead of $\nabla^{(1)} f(\theta, x)$.

A continuous function $\r:\;[0,+\infty)\to [0,+\infty)$ is called a rate function if $\r(0)=0$, $\r$ is increasing and $\lim_{x\downarrow 0} \r(x)/x=0$. 

%We equip the space $\Delta\times\rset^p$ with the metric 
%\[\d((\delta_1,\theta_1);(\delta_2,\theta_2))\eqdef \sqrt{\|\delta_2-\delta_1\|_0^2 + \|\theta_2-\theta_1\|_1^2},\]
%and we write $\W_2$ to denote the 2-Wasserstein metric on $\Delta\times\rset^p$ with respect to $\d$ (see e.g. \cite{villani:09}~Chapter 6) for definition and basic properties).

All constructs  and other constants in the paper (including the sample size $n$)  depend a priori on the dimension $p$.  And we carry the asymptotics  by letting $p$ grow to infinity. We say that  a term $x\in\rset$ is an absolute constant if $x$ does not depend on $p$. Throughout the paper $C_0$ denotes some generic absolute constant whose actual value may change from one appearance to the next.

\section{Main assumptions and Posterior sparsity}\label{sec:sparsity}

We introduce here our two main assumptions. %The first assumption is a type of restricted concavity  assumption on the quasi-likelihood function, and the second assumption pertains to  the prior $\{\omega(\delta),\;\delta\in\Delta\}$. %Our working assumption is that the log-quasi-likelihood function $\ell(\cdot;z)$ is  concave with a maximizer located near the parameter value of interest $\theta_\star$ (that we assume given). We will need a slightly stronger assumption than plain concavity. This is captured in the next assumption. 
We set
\[\L_{\theta_1}(\theta;z) \eqdef \ell(\theta;z) - \ell(\theta_1;z) -\pscal{\nabla\ell(\theta_1;z)}{\theta-\theta_1},\;\;\theta\in\rset^p,\]
and we assume that the following holds.

\begin{assumption}\label{H1}
We observe a $\Zset$-valued random variable $Z\sim f_\star$, for some probability density $f_\star$ on $\Zset$. Furthermore there exists $\delta_\star\in\Delta$, $\theta_\star\in\rset^p_{\delta_\star}$, $\theta_\star\neq \textbf{0}_p$, finite positive constants $\bar\rho,\bar\kappa$,  such that $\PP_\star(Z\in \e_0)>0$, where 
\begin{multline*}
\e_{0} \eqdef \left\{z\in\Zset:\; \Pi(\cdot\vert z) \mbox{ is well-defined, }\;\; \|\nabla\ell(\theta_\star;z)\|_\infty\leq \frac{\bar\rho}{2},\;\mbox{ and }\;\;\right.\\
\left. \L_{\theta_\star}(\theta;z) \geq -\frac{\bar{\kappa}}{2}\|\theta-\theta_\star\|_2^2,\;\mbox{ for all }\theta\in\rset^p_{\delta_\star} \right\}.\end{multline*}
Furthermore, we assume that the prior parameter $\rho_1$ satisfies $32\rho_1 \|\theta_\star\|_\infty \leq \bar \rho$, and we write $\PP_\star$ and $\PE_\star$ to denote probability and expectation operator under $f_\star$.
\end{assumption}

\medskip
\begin{remark}
H\ref{H1} is very mild. Its main purpose  is to introduce the data generating process, the true value of the parameter,  and their relationship to the quasi-likelihood function. Specifically, since $\nabla\ell(\cdot;z)$ is null at the maximizer of $\ell(\cdot;z)$, having $z\in\e_0$ implies that the maximizer of $\ell(\cdot;z)$ is close to $\theta_\star$ in some sense, and  the largest restricted (restricted to $\rset^p_{\delta_\star}$) eigenvalue of the second derivative of $-\ell(\cdot;z)$ is bounded from above by $\bar\kappa$.  %ZIn that sense, H\ref{H1} shows that there is a bit of flexibility in choosing a quasi-likelihood function in a given problem. 
The assumption that $\theta_\star\neq {\bf 0}_p$   is made only out of mathematical convenience. All the results below continue to hold when $\theta_\star = {\bf 0}_p$ albeit with minor adjustments. 
%The constant $\bar\kappa$ in H\ref{H1} is an upper-bound on the curvature of $-\ell$ around $\theta_\star$. Put in optimization jargon, the assumption $\L_{\theta_\star}(\theta;z) \geq -\frac{\bar{\kappa}}{2}\|\theta-\theta_\star\|_2^2$ is equivalent to the $\bar\kappa$-smoothness of $\ell(\cdot;z)$  over the set $\rset^p_{\delta_\star}$ (see e.g. \cite{beck:17}~Chapter 5 for more on smothness). This form of smoothness is needed to lower bound the normalizing constant of $\Pi$.  
\vspace{-0.6cm}
\begin{flushright}
$\square$
\end{flushright}
\end{remark}

For convenience we will write $s_\star\eqdef\|\theta_\star\|_0$ to denote the number of non-zero components of the elements of $\theta_\star$.  We assume next that the prior on $\delta$ is a product of independent Bernoulli distribution with small probability of success.

\begin{assumption}\label{H2}
We assume that
\[\omega(\delta)= \q^{\|\delta\|_0}(1-\q)^{p-\|\delta\|_0},\;\;\;\delta\in\Delta,\]
where $\mathsf{q}\in (0,1)$ is such that $\frac{\mathsf{q}}{1-\mathsf{q}} = \frac{1}{p^{u+1}}$, for some  absolute constant $u>0$. Furthermore we will assume that $p\geq 9$, $p^{u/2}\geq 2e^{2\rho_1}$.
\end{assumption}
\medskip

Discrete priors as in H\ref{H2} and generalizations  were introduced by \cite{castillo:etal:12}. This is a very strong prior distribution that is well-suited for high-dimensional problems with limited sample where the signal is believed to be very sparse. It should be noted that this prior can perform poorly if these conditions are not met. We show next that  the resulting posterior distribution is also typically sparse. 

\begin{theorem}\label{thm:0}
Assume H\ref{H1}-H\ref{H2}.   Suppose that there exists a rate function $\r_0$ such that for all $\delta\in\Delta$,
\begin{multline}\label{eq:curvature:cond:thm0}
\log\PE_\star\left[\textbf{1}_{\e}(Z)e^{\L_{\theta_\star}(u;Z)  + \left(1-\frac{\rho_1}{\bar\rho}\right)\pscal{\nabla\ell(\theta_\star;Z)}{u-\theta_\star}} \right] \\
\leq \left\{\begin{array}{ll} -\frac{1}{2}\r_0(\|\delta_\star\cdot(u-\theta_\star)\|_2) & \mbox{ if } \;\|\delta_\star^c\cdot(u-\theta_\star)\|_1 \leq 7\|\delta_\star\cdot(u-\theta_\star)\|_1\\ 0 & \mbox{ otherwise}\end{array}\right.,\end{multline}
for some measurable subset $\e\subseteq\e_0$. Let $\mathsf{a}_0\eqdef -\min_{x>0}\left[\r_0(x) -4\rho_1 s_\star^{1/2}x\right]$.  If for some absolute constant $c_0$ we have
\begin{equation}\label{eq:cond:thm:0}
s_\star \left(\frac{1}{2} +2\rho_1\right)  + \frac{s_\star}{2}\log\left(1+\frac{\bar \kappa}{\rho_1}\right) +\frac{\mathsf{a_0}}{2} +2\rho_1\|\theta_\star\|_2^2 \leq c_0s_\star\log(p),\end{equation}
then it holds that for all $j\geq 1$
\[\PE_\star\left[\textbf{1}_{\e}(Z)\Pi\left(\|\delta\|_0\geq s_\star\left(1 + \frac{2(1+c_0)}{u}\right)+  j\;\vert Z\right)\right] \leq  \frac{2}{p^{\frac{uj}{2}}}.\]
\end{theorem}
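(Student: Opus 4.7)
The plan is to integrate out $\theta$ from the joint posterior (\ref{post:Pi}) and reduce the claim to a comparison of marginal weights on $\delta$. Writing $\theta=\theta_\delta+(\theta-\theta_\delta)$ with $\theta-\theta_\delta\in\rset^p_{\delta^c}$, the Gaussian factor $(\rho_0/(2\pi))^{(p-\|\delta\|_0)/2}e^{-\rho_0\|\theta-\theta_\delta\|_2^2/2}$ integrates to $1$ over $\rset^p_{\delta^c}$, the $\rho_0$-dependence drops out, and one obtains $\Pi(\delta\vert z)\propto g(\delta;z)$ where
\[ g(\delta;z)\eqdef\omega(\delta)\left(\frac{\rho_1}{2\pi}\right)^{\|\delta\|_0/2}\int_{\rset^p_\delta}e^{\ell(\theta;z)-\frac{\rho_1}{2}\|\theta\|_2^2}\,\rmd\theta. \]
Since the normalizer is at least $g(\delta_\star;z)$, I would use $\Pi(\|\delta\|_0\geq K\vert z)\leq\sum_{\delta:\,\|\delta\|_0\geq K}g(\delta;z)/g(\delta_\star;z)$, then take $\PE_\star[\textbf{1}_{\e}(Z)\cdot]$ term-by-term, handling the random denominator via a deterministic lower bound so that Fubini can be applied to the numerator.

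The lower bound on $g(\delta_\star;z)$ comes from a Laplace-type argument in the $s_\star$-dimensional subspace $\rset^p_{\delta_\star}$. On $\e_0$, I would Taylor-expand $\ell(\theta;z)-\rho_1\|\theta\|_2^2/2$ around $\theta_\star$ using $\|\nabla\ell(\theta_\star;z)\|_\infty\leq\bar\rho/2$, the restricted upper-curvature $\L_{\theta_\star}(\theta;z)\geq-\bar\kappa\|\theta-\theta_\star\|_2^2/2$, and $32\rho_1\|\theta_\star\|_\infty\leq\bar\rho$ to control the cross term $\rho_1\pscal{\theta_\star}{\theta-\theta_\star}$, then complete the square to reach $g(\delta_\star;z)\geq C\,\omega(\delta_\star)\,e^{\ell(\theta_\star;z)-\rho_1\|\theta_\star\|_2^2/2}(\rho_1/(\rho_1+\bar\kappa))^{s_\star/2}$. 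The random part then factors cleanly as $e^{\ell(\theta_\star;z)}$. For the numerator I would write $\ell(\theta_\delta;z)-\ell(\theta_\star;z)=\pscal{\nabla\ell(\theta_\star;z)}{\theta_\delta-\theta_\star}+\L_{\theta_\star}(\theta_\delta;z)$, split $\pscal{\nabla\ell}{\cdot}=(1-\rho_1/\bar\rho)\pscal{\nabla\ell}{\cdot}+(\rho_1/\bar\rho)\pscal{\nabla\ell}{\cdot}$, and on $\e\subseteq\e_0$ absorb the second piece into $(\rho_1/2)\|\theta_\delta-\theta_\star\|_1$; the exponent now fits the hypothesis of (\ref{eq:curvature:cond:thm0}) exactly. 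Bringing $\PE_\star[\textbf{1}_{\e}(Z)\cdot]$ inside the $\rset^p_\delta$-integral by Fubini, (\ref{eq:curvature:cond:thm0}) supplies the factor $e^{-\r_0(\|\delta_\star\cdot(\theta_\delta-\theta_\star)\|_2)/2}$ on the cone $\{\|\delta_\star^c\cdot(\theta_\delta-\theta_\star)\|_1\leq 7\|\delta_\star\cdot(\theta_\delta-\theta_\star)\|_1\}$ and the trivial $1$ outside, where the slab term $e^{-\rho_1\|\theta_\delta\|_2^2/2}$ (using $\|\theta_\delta\|_2^2\geq\|\delta_\star^c\cdot(\theta_\delta-\theta_\star)\|_2^2$) together with the $\ell_1$-remainder of the gradient split provides the decay needed to bound the anti-cone integral by a finite Gaussian normalization. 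The residual $\ell_1$-terms on the cone are absorbed into the rate function via $\|\delta_\star\cdot(\theta_\delta-\theta_\star)\|_1\leq s_\star^{1/2}\|\delta_\star\cdot(\theta_\delta-\theta_\star)\|_2$, and the supremum in the resulting one-dimensional maximization is exactly $\mathsf{a}_0$.

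Assembling the bounds, the per-$\delta$ ratio is at most $(\omega(\delta)/\omega(\delta_\star))\exp(s_\star(\frac{1}{2}+2\rho_1)+\frac{s_\star}{2}\log(1+\bar\kappa/\rho_1)+\mathsf{a}_0/2+2\rho_1\|\theta_\star\|_2^2)$, which under H\ref{H2} and (\ref{eq:cond:thm:0}) is bounded by $p^{-(u+1)(\|\delta\|_0-s_\star)+c_0 s_\star}$. Summing over $\delta$ with $\|\delta\|_0=k$ via $\binom{p}{k}\leq p^k$ and geometrically over $k\geq K\eqdef s_\star(1+2(1+c_0)/u)+j$ gives exponent $(u+1+c_0)s_\star-uk\leq-(1+c_0)s_\star-uj$; the geometric tail in $k-K$ contributes a harmless factor $(1-p^{-u})^{-1}\leq 2$ under H\ref{H2}, producing the stated tail $\leq 2/p^{uj/2}$. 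The main obstacle I anticipate is the off-cone bookkeeping: one must track how the $\ell_2$-slab decay, the $\ell_1$-remainder from the gradient split, and the sparsity constraint $\theta_\delta\in\rset^p_\delta$ (a subspace of dimension $k$ intersecting $\rset^p_{\delta_\star}$ in at most $s_\star$ coordinates) fit together so that the normalization of the anti-cone integral is tight enough to be absorbed by the $(\rho_1/(2\pi))^{k/2}$ prefactor without inflating the displayed constants.
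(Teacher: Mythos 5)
Your proposal follows essentially the same route as the paper's proof: marginalize the spike coordinates, lower-bound the normalizing constant by its $\delta_\star$-term using the restricted curvature on $\e_0$ (this is the paper's Lemma \ref{lem:control:nc}), split the score term into a $(1-\rho_1/\bar\rho)$-part and a $(\rho_1/\bar\rho)$-part bounded by $\tfrac{\rho_1}{2}\|u-\theta_\star\|_1$ via $\|\nabla\ell(\theta_\star;z)\|_\infty\le\bar\rho/2$, apply Fubini and condition (\ref{eq:curvature:cond:thm0}) with the cone/anti-cone dichotomy, absorb the rest with the Gaussian slab, and finish with the prior sum under H\ref{H2}. The one place where your bookkeeping as written does not land on the stated constants is the cone step: bounding the full remainder $\tfrac{\rho_1}{2}\|u-\theta_\star\|_1\le 4\rho_1 s_\star^{1/2}\|\delta_\star\cdot(u-\theta_\star)\|_2$ and pairing it with $-\tfrac12\r_0$ gives $\tfrac12\sup_{x>0}\left[8\rho_1 s_\star^{1/2}x-\r_0(x)\right]$, which is not $\mathsf{a}_0/2$. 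The paper instead first multiplies and divides by $e^{\rho_1(\|\theta_\star\|_1-\|u\|_1)}$ (the function $\textsf{d}(u)$ in its proof), so that on the cone only a net $2\rho_1\|\delta_\star\cdot(u-\theta_\star)\|_1\le 2\rho_1 s_\star^{1/2}x$ survives, giving exactly $-\tfrac12\left[\r_0(x)-4\rho_1 s_\star^{1/2}x\right]\le\mathsf{a}_0/2$, while on the anti-cone $\textsf{d}(u)\le-\tfrac{\rho_1}{4}\|u-\theta_\star\|_1$; the leftover factor $e^{\rho_1(\|u\|_1-\|\theta_\star\|_1)}$ and the slab cross term are then majorized by $e^{2\rho_1\|\theta_\star\|_2^2}\left(\sqrt{2}\,e^{2\rho_1}\right)^{\|\delta\|_0}e^{-\frac{\rho_1}{4}\|u-\theta_\star\|_2^2}$, and it is precisely this per-coordinate factor $\sqrt{2}\,e^{2\rho_1}$, eaten by the condition $p^{u/2}\ge 2e^{2\rho_1}$ of H\ref{H2}, that makes the final exponent $uj/2$ rather than the $uj$ your tally suggests. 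This is exactly the ``off-cone bookkeeping'' you flagged yourself; with the paper's $\ell_1$-comparison device inserted, your outline goes through and reproduces the theorem as stated.
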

\begin{proof}
See Section \ref{sec:proof:thm:0}.
\end{proof}
\medskip

Theorem \ref{thm:0}  is analogous to Theorem 1 of \cite{castillo:etal:14}, and Theorem 3 of \cite{atchade:15b}, and says that the quasi-posterior distribution $\Pi$ is automatically sparse in $\delta$ (of course $\theta$ is never sparse). The main contribution here is the fact that this behavior  holds with Gaussian  slab priors. The condition in (\ref{eq:cond:thm:0}) implies that the precision parameter of the slab density (that is $\rho_1$)  should be of order $\log(p)$ or smaller. Simulation results (not reported here) show indeed that the method performs poorly if $\rho_1$ is taken too large. 

Roughly speaking, the condition (\ref{eq:curvature:cond:thm0}) is expected to hold if
\[\textbf{1}_{\e_0}(Z)\L_{\theta_\star}(u;Z) \leq - \log  \PE_\star\left[e^{\left(1-\frac{\rho_1}{\bar\rho}\right)\pscal{\nabla\ell(\theta_\star;Z)}{u-\theta_\star}} \right],\]
for all $u$ in the cone $\Cset = \{u\in\rset^p:\;\|\delta_\star^c\cdot(u-\theta_\star)\|_1 \leq 7\|\delta_\star\cdot(u-\theta_\star)\|_1\}$. If the quasi-score $\nabla\ell(\theta_\star;Z)$ is sub-Gaussian, then  the right-hand side of the last display is lower bounded by $-c_0(1-\rho_1/\bar\rho)^2\|u-\theta_\star\|_2^2$, for some positive constant $c_0$. In this case  (\ref{eq:curvature:cond:thm0}) will hold if
\[\textbf{1}_{\e_0}(Z)\L_{\theta_\star}(u;Z) \leq -c_0(1-\rho_1/\bar\rho)^2\|u-\theta_\star\|_2^2,\]
for all $u\in\Cset$. Hence (\ref{eq:curvature:cond:thm0}) is a form  restricted strong concavity of $\ell$ over $\Cset $.  We refer the reader to   \cite{negahbanetal10} for more details on restricted strong concavity.

\subsection{Implications for Markov Chain Monte Carlo sampling}\label{subsec:algo1}
Theorem \ref{thm:0} has  implications  for Markov Chain Monte Carlo (MCMC) sampling. To show this we consider a  Metropolized-Gibbs strategy  to sample from $\Pi$ whereby we update $\theta$ keeping $\delta$ fixed, and then update $\delta$ keeping $\theta$ fixed -- we refer the reader to (\cite{robertetcasella04}) for an introduction to basic MCMC algorithms. Note that given $\delta$, $[\theta]_\delta$ and $[\theta]_{\delta^c}$ are conditionally independent, and $[\theta]_{\delta^c}\stackrel{i.i.d.}{\sim}\textbf{N}(0,\rho_0^{-1})$, whereas  $[\theta]_\delta$ can be updated using either its full conditional distribution when available, or using an  extra MCMC update. For each $j$, given $\theta$ and $\delta_{-j}$, the variable $\delta_j$ has a closed-form Bernoulli distribution. However, we choose to update $\delta_j$ using an Independent Metropolis-Hastings kernel with a $\textsf{Ber}(0.5)$ proposal. Putting these steps together yields the following algorithm.

\begin{algorithm}\label{algo:basic}
Draw $(\delta^{(0)},\theta^{(0)}) \in\Delta\times\rset^p$ from some initial distribution. For $k=0,\ldots,$ repeat  the following. Given $(\delta^{(k)},\theta^{(k)})= (\delta,\theta)\in\Delta\times\rset^p$:
\begin{description}
\item [(STEP 1)] For all $j$ such that $\delta_j=0$, draw $\theta_j^{(k+1)}\sim \textbf{N}(0,\rho_0^{-1})$. Using $[\theta]_{\delta}$, draw jointly $[\theta^{(k+1)}]_{\delta}$ from some appropriate MCMC kernel on $\rset^{\|\delta\|_0}$ with invariant distribution proportional to
\[u\mapsto e^{\ell\left((u,0)_\delta;z\right) -\frac{\rho_1}{2}\|u\|_2^2}.\]
\item [(STEP 2)] Given $\theta^{(k+1)}=\bar\theta$, set $\delta^{(k+1)} =\delta^{(k)}$  and do the following for $j=1,\ldots,p$. Draw $\iota\sim\textbf{Ber}(0.5)$.  If $\delta^{(k+1)}_j =0$, and $\iota=1$, with probability  $\min(1,A_j)/2$ change $\delta^{(k+1)}_j$ to $\iota$. If $\delta^{(k+1)}_j=1$, and $\iota=0$, with probability $\min(1,A_j^{-1})/2$, change $\delta^{(k+1)}_j$ to $\iota$; where
\begin{equation}\label{def:Aj}
A_j  \eqdef \frac{\mathsf{q}}{1-\mathsf{q}} \sqrt{\frac{\rho_1}{\rho_0}} e^{-\left(\rho_1-\rho_0\right)\frac{\bar\theta_j^2}{2}} e^{\ell(\bar\theta_\delta^{(j,1)};z) - \ell(\bar\theta_\delta^{(j,0)};z)},\end{equation}
where $\bar\theta_\delta^{(j,1)},\bar\theta_\delta^{(j,0)}\in\rset^p$ are defined as $(\bar\theta_\delta^{(j,1)})_k = (\bar\theta_\delta^{(j,0)})_k =(\bar\theta_\delta)_k$, for all $k\neq j$, and $(\bar\theta_\delta^{(j,1)})_j = \bar\theta_j$, $(\bar\theta_\delta^{(j,0)})_j = 0$.
\end{description}
\vspace{-0.6cm}
\begin{flushright}
$\square$
\end{flushright}

\end{algorithm}
\medskip

We have left unspecified the MCMC kernel on $\rset^{\|\delta\|_0}$ used in STEP 1, since it can be set up in many ways.  Let us call $C_1(\delta^{(k)})$  the computational cost of that part of STEP 1, and let $C_2(\delta)$ denote the cost of computing the quasi-likelihood $\ell(\theta_\delta;z)$ which is the dominant term  in (\ref{def:Aj}). Then  as $p$ grows, the total per-iteration cost of Algorithm \ref{algo:basic} is of order
\[ O\left(C_1(\delta^{(k)} ) +  pC_2(\delta^{(k)})\right).\]
 Since Theorem \ref{thm:0} implies that a typical  draw $\delta^{(k)}$ from the quasi-posterior distribution is sparse and satisfies $\|\delta^{(k)}\|_0 =O(s_\star)$, we can conclude that  the per-iteration cost of the algorithm  is accordingly reduced  in problems where the sparsity of $\delta$ reduces the cost of the MCMC update in STEP 1, and the cost of computing the sparsified pseudo-likelihood $\ell(\theta_\delta;z)$. For instance, in a linear regression model (see Algorithm \ref{algo:gibbs} in Appendix \ref{append:algo} for a detailed presentation), if the Gram matrix $X'X$ is pre-computed then $ C_1(\delta^{(k)} )  =O(\|\delta^{(k)}\|_0^3) = O(s_\star^3)$ (the cost of Cholesky decomposition), and $C_2(\delta^{(k)}) =O(\|\delta^{(k)}\|_0) = O(s_\star)$. As a result the per-iteration cost of Algorithm \ref{algo:gibbs} grows with $p$ as $O(s_\star^3 + s_\star p)=O(s_\star p)$, which is substantially faster than $O(\min(n,p)p^2)$ as needed by most MCMC algorithms for high-dimensional linear regression (\cite{bhattacharya:etal:16}). We refer the reader to Section \ref{sec:lin:reg} for a numerical illustration.

\section{Contraction rate and model selection consistency}\label{sec:contraction}
If in addition to the assumptions above, the restrictions of $\ell$ to the sparse subsets $\rset^p_\delta$  are  strongly concave then one can show that a draw $\theta$ from  $\Pi$ is typically close to $\theta_\star$. To elaborate on  this,  let $\bar s\geq s_\star$ be some arbitrary integer and set $\Delta_{\bar s}\eqdef\{\delta\in\Delta:\;\|\delta\|_0\leq \bar s\}$, and
\[\e_1(\bar s) \eqdef\e_0 \cap \left\{z\in\Zset:\; \L_{\theta_\star}(\theta;z) \leq -\frac{1}{2}\r(\|\theta-\theta_\star\|_2) ,\;\;\mbox{ for all } \delta\in\Delta_{\bar s},\;\theta\in\rset^p_{\delta}\right\},\]
for some rate function $\r$.  Hence $z\in\e_1(\bar s)$ implies that the function $u\mapsto \ell(u;z)$ behaves like a strongly concave function when restricted to $\rset^p_\delta$, for all $\delta\in\Delta_{\bar s}$, but with a general rate function $\r$. Here also, checking that $Z\in\e_1(\bar s)$ boils down to checking a strong restricted  concavity of $\ell$, which can be done using similar methods as in \cite{negahbanetal10}.  The use of a general rate function $\r$ allows to handle problems  that are not strongly convex in the usual sense  (as for instance with logistic regression). Our main result in this section states that  when $z\in\e_1(\bar s)$, we are automatically guaranteed  a minimum rate of contraction for  $\Pi$ given by
\begin{equation}\label{eq:rate}
\epsilon \eqdef \inf\left\{z>0:\; \r(x) -2(s_\star + \bar s)^{1/2} \bar \rho x\geq 0,\;\mbox{ for all } x\geq z\right\}.\end{equation}
 To gain some intuition on $\epsilon$, consider  a linear regression model where $\ell(\theta;z) = -\|z-X\theta\|_2^2/(2\sigma^2)$. Then  we have
 \[\L_{\theta_\star}(\theta;z) = -\frac{n}{2\sigma^2}(\theta-\theta_\star)'\left(\frac{X'X}{n}\right)(\theta-\theta_\star).\] 
If $\theta\in\rset^p_\delta$ for some $\delta\in\Delta_{\bar s}$, then $\L_{\theta_\star}(\theta;z)\leq -n\underline{v}(\bar s+s_\star)\|\theta-\theta_\star\|_2^2/(2\sigma^2)$,  where $\underline{v}(\bar s+s_\star)$ is the restricted smallest eigenvalue of $X'X/n$ over $(\bar s+s_\star)$-sparse vectors. Hence,  we can take the rate function $\r(x) = n\underline{v}(\bar s+s_\star)x^2/\sigma^2$, In that case the contraction rate in (\ref{eq:rate}) gives  $\epsilon = 2\sigma^2(\bar s+s_\star)^{1/2} \bar \rho/ (n\underline{v}(\bar s+s_\star))$. The final form of the rate depends on $\bar\rho$ (in H\ref{H1}) which is determined by the tail behavior of the quasi-score $\nabla\ell(\theta_\star;Z)$. In the sub-Gaussian case $\bar\rho\propto \sqrt{n\log(p)}$, and this gives $\epsilon\propto \sqrt{(\bar s+s_\star)\log(p)/n}$. We refer the reader to the proof of Corollary \ref{coro:lm} for more details.

We set 
\begin{equation}\label{def:set:B}
\cB \eqdef \bigcup_{\delta\in\Delta_{\bar s}}\; \{\delta\}\times \cB^{(\delta)},\;
\end{equation}
where 
\begin{equation} \cB^{(\delta)}\eqdef \left\{\theta\in\rset^p:\; \|\theta_\delta-\theta_\star\|_2 \leq C\epsilon,\;\|\theta-\theta_\delta\|_2\leq \sqrt{(1+C_1)\rho_0^{-1} p},\right\},\end{equation}
for some absolute constants $C,C_1\geq 3$, where $\epsilon$ is as defined in (\ref{eq:rate}).  Our next result says that if $(\delta,\theta)\sim\Pi(\cdot\vert Z)$ and $Z\in\e_1(\bar s)$, then with high probability we have $\theta\in\cB^{(\delta)}$ for some $\delta\in\Delta_{\bar s}$: $\theta_\delta$ is close to $\theta_\star$, and  $\theta-\theta_\delta$ is small.
%Under the assumptions stated above we expect most of the probability mass of $\Pi$ to be on $\cB$ as the next result shows. 
 
\begin{theorem}\label{thm1}
Assume H\ref{H1}-H\ref{H2}. Let $\bar s\geq s_\star$ be some arbitrary integer, and take $\e\subseteq\e_1(\bar s)$. If 
\begin{equation}\label{tech:cond:thm1}
C\bar\rho(s_\star +\bar s)^{1/2}\epsilon  \geq 32\max\left[\bar s\log(p), \;(1+u)s_\star\log\left(p+\frac{p\bar\kappa}{\rho_1}\right)\right],\end{equation}
then for all $p$ large enough,
\begin{equation}\label{thm1:eq:main:bound}
\PE_\star\left[\textbf{1}_\e(Z)\Pi\left(\cB^c\vert Z\right)\right] \leq  \PE_\star\left[\textbf{1}_{\e}(Z)\Pi\left(\|\delta\|_0 > \bar s\;\vert Z\right)\right]  + 8e^{-\frac{C}{32}\bar\rho(s_\star+ \bar s)^{1/2}\epsilon}  + 2e^{-p}\end{equation}
where  $\cB^c \eqdef (\Delta\times\rset^p)\setminus \cB$.
\end{theorem}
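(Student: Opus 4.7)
The plan is to decompose $\cB^c$ as a union of three events and bound each separately. Define
\[\A_1 \eqdef \{(\delta,\theta) : \delta\in\Delta_{\bar s},\; \|\theta_\delta - \theta_\star\|_2 > C\epsilon\},\qquad \A_2 \eqdef \{(\delta,\theta) : \delta\in\Delta_{\bar s},\; \|\theta-\theta_\delta\|_2 > \sqrt{(1+C_1)p/\rho_0}\},\]
so that $\cB^c \subseteq \{\|\delta\|_0 > \bar s\}\cup\A_1\cup\A_2$. The first set contributes the term $\PE_\star[\textbf{1}_\e(Z)\Pi(\|\delta\|_0>\bar s\vert Z)]$ directly, so I only need to control $\Pi(\A_1\vert Z)$ and $\Pi(\A_2\vert Z)$.

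For $\A_2$, the key structural observation is that because $\ell(\theta;z)$ depends on $\theta$ only through $\theta_\delta$, the posterior factors: conditionally on $\delta$, the block $[\theta]_{\delta^c}$ is independent of $[\theta]_\delta$ and of $Z$, and distributed as i.i.d.\ $\gauss(0,\rho_0^{-1})$. Thus $\rho_0\|\theta-\theta_\delta\|_2^2 \vert \delta \sim \chi^2_{p-\|\delta\|_0}$ uniformly in $\delta\in\Delta_{\bar s}$, and a Laurent--Massart tail bound with deviation parameter $x=p$ yields $\Pi(\A_2\vert Z)\leq 2e^{-p}$ once $C_1$ is a large enough absolute constant.

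The bulk of the work is the bound on $\Pi(\A_1\vert Z)$, which I would obtain via a standard evidence-ratio argument. Integrating out the Gaussian block $[\theta]_{\delta^c}$ in both numerator and denominator reduces the ratio to
\[\Pi(\A_1\vert z) \;\leq\; \frac{\sum_{\delta\in\Delta_{\bar s}}\omega(\delta)(\rho_1/2\pi)^{\|\delta\|_0/2}\, m_\delta^{\mathrm{bad}}(z)}{\omega(\delta_\star)(\rho_1/2\pi)^{s_\star/2}\, m_{\delta_\star}(z)},\]
with $m_\delta^{\mathrm{bad}}(z)= \int_{\|(u,0)_\delta-\theta_\star\|_2>C\epsilon} e^{\ell((u,0)_\delta;z)-\rho_1\|u\|_2^2/2}\,du$ and $m_{\delta_\star}(z)=\int e^{\ell((u,0)_{\delta_\star};z)-\rho_1\|u\|_2^2/2}\,du$. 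For the numerator, $\theta_\delta\in\rset^p_\delta$ with $\delta\in\Delta_{\bar s}$, so the restricted strong concavity built into $\e_1(\bar s)$ gives $\L_{\theta_\star}(\theta_\delta;z)\leq -\tfrac12\r(\|\theta_\delta-\theta_\star\|_2)$; combined with $|\pscal{\nabla\ell(\theta_\star;z)}{\theta_\delta-\theta_\star}|\leq (\bar\rho/2)(s_\star+\bar s)^{1/2}\|\theta_\delta-\theta_\star\|_2$ (using $\|\nabla\ell\|_\infty\leq\bar\rho/2$ on $\e_0$ and that the support of $\theta_\delta-\theta_\star$ has size at most $s_\star+\bar s$), the definition of $\epsilon$ delivers $\ell((u,0)_\delta;z)-\ell(\theta_\star;z)\leq -\tfrac{C}{2}\bar\rho(s_\star+\bar s)^{1/2}\epsilon$ uniformly on the bad set. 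For the denominator, I would restrict $u$ to a small ball around $[\theta_\star]_{\delta_\star}$, apply $\L_{\theta_\star}\geq -\tfrac{\bar\kappa}{2}\|\cdot\|_2^2$ from $\e_0$ together with $\|u\|_2^2\leq 2\|\theta_\star\|_2^2+2\|u-[\theta_\star]_{\delta_\star}\|_2^2$, and evaluate the resulting Gaussian-type integral to produce $m_{\delta_\star}(z)\gtrsim e^{\ell(\theta_\star;z)-2\rho_1\|\theta_\star\|_2^2}(2\pi/(\bar\kappa+2\rho_1))^{s_\star/2}$.

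The principal obstacle is the final bookkeeping. After the factor $e^{\ell(\theta_\star;z)}$ cancels, the slab-prior normalizing constants produce $((\bar\kappa+2\rho_1)/\rho_1)^{s_\star/2}$, and by H\ref{H2} the sum over $\delta$ gives $\sum_{k=0}^{\bar s}\binom{p}{k}(\q/(1-\q))^{k-s_\star}\leq 2p^{(u+1)s_\star}$. On the logarithmic scale all the surviving positive contributions are of order $O\bigl(\bar s\log p + s_\star\log(1+\bar\kappa/\rho_1) + \rho_1\|\theta_\star\|_2^2\bigr)$, and this is precisely the quantity that the hypothesis (\ref{tech:cond:thm1}) is designed to dominate. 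A careful accounting shows these contributions consume at most $\tfrac{15}{16}$ of the exponent $\tfrac{C}{2}\bar\rho(s_\star+\bar s)^{1/2}\epsilon$, leaving the advertised residue $8e^{-\frac{C}{32}\bar\rho(s_\star+\bar s)^{1/2}\epsilon}$, with the constant $8$ absorbing polynomial-in-$p$ prefactors for $p$ sufficiently large.
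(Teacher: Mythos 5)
Your overall route differs from the paper's: you argue pointwise on $\e_1(\bar s)$ with a direct evidence-ratio bound (numerator bounded uniformly on the bad set via restricted concavity plus $\|\nabla\ell(\theta_\star;z)\|_\infty\le\bar\rho/2$, denominator via a normalizing-constant lower bound as in Lemma \ref{lem:control:nc}), whereas the paper runs a Kleijn--van der Vaart testing argument (Lemma \ref{test}) and takes expectations with Fubini, the $8e^{-\frac{C}{32}\bar\rho(s_\star+\bar s)^{1/2}\epsilon}$ term being the sum of the type-I error and the no-test contribution. Skipping the test is legitimate here: on $\e_1(\bar s)$ one has $\ell(\theta_\delta;z)-\ell(\theta_\star;z)\le -\tfrac12\r(x)+\tfrac{\bar\rho}{2}(s_\star+\bar s)^{1/2}x\le-\tfrac14\r(x)$ for $x=\|\theta_\delta-\theta_\star\|_2\ge\epsilon$, which is exactly the estimate the test delivers in expectation. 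Your treatment of the off-support block ($2e^{-p}$) and of the prior mass ratio ($\le 2p^{(1+u)s_\star}$) also matches the paper.

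The gap is in your final bookkeeping of the slab-prior terms. By absorbing the numerator factor $e^{-\rho_1\|u\|_2^2/2}$ into the Gaussian normalizer and separately lower-bounding the denominator with a standalone factor $e^{-c\rho_1\|\theta_\star\|_2^2}$ (your $c=2$), you are left with an uncancelled factor $e^{c\rho_1\|\theta_\star\|_2^2}$ in the ratio, and you assert that hypothesis (\ref{tech:cond:thm1}) dominates a contribution of order $\rho_1\|\theta_\star\|_2^2$. It does not: (\ref{tech:cond:thm1}) contains no such term (that term appears only in (\ref{eq:cond:thm:0}), which Theorem \ref{thm1} does not assume), and under H\ref{H1} alone one only gets $\rho_1\|\theta_\star\|_2^2\le\tfrac{\bar\rho}{32}\|\theta_\star\|_1$, which in the strong-signal regime can far exceed $\bar\rho(s_\star+\bar s)^{1/2}\epsilon$, so your bound would not yield (\ref{thm1:eq:main:bound}). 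The repair is the paper's pairing trick: keep the ratio $e^{\frac{\rho_1}{2}(\|\theta_\star\|_2^2-\|\theta\|_2^2)}$ intact, bound it by $-\tfrac{\rho_1}{2}\|\theta-\theta_\star\|_2^2+2\rho_1\|\theta_\star\|_\infty\|\theta-\theta_\star\|_1\le-\tfrac{\rho_1}{2}\|\theta-\theta_\star\|_2^2+2\rho_1\|\theta_\star\|_\infty(s_\star+\bar s)^{1/2}\|\theta_\delta-\theta_\star\|_2$, and invoke the H\ref{H1} condition $32\rho_1\|\theta_\star\|_\infty\le\bar\rho$ (which you never use) so that this linear term is swallowed by the decay $\tfrac14\r(x)\ge\tfrac12\bar\rho(s_\star+\bar s)^{1/2}x$ on the bad set, while the remaining $e^{-\frac{\rho_1}{2}\|\theta-\theta_\star\|_2^2}$ supplies the Gaussian normalizer. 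With that change your argument goes through; as written, the $\rho_1\|\theta_\star\|_2^2$ step fails.
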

\begin{proof}
See Section \ref{sec:proof:thm1}.
\end{proof}

\medskip
\begin{remark}
The result implies that for $j$ such that $\delta_j=0$, $|\theta_j| =O(\sqrt{\rho_0^{-1}})$ under $\Pi$. As a result we recommend scaling  $\rho_0^{-1}$ in practice as
\[\rho_0^{-1} = \frac{C_0}{n},\;\;\mbox{ or }\;\; \rho_0^{-1} = \frac{C_0}{p}.\]

When the posterior distribution is known to be sparse one can choose $\bar s$ appropriately to make the first term on the right hand side of (\ref{thm1:eq:main:bound}) small. For instance under the assumptions of Theorem \ref{thm:0}, we can take 
\[\bar s = s_\star\left(1 + \frac{2(1+c_0)}{u}\right)+  k.\]
If in addition  $\PP_\star(Z\notin\e_1(\bar s))\to 0$ as $p\to\infty$, we can deduce from (\ref{thm1:eq:main:bound}) that $\PE_\star[\Pi(\cB^c\vert Z)]\to 0$, as $p\to\infty$. If Theorem \ref{thm:0} does not apply, one  can  modify H\ref{H2} to impose the sparsity constraint $\|\delta\|_0\leq \bar s$ directly in the prior distribution. In this case  the first term on the right hand side of (\ref{thm1:eq:main:bound}) automatically vanishes. The main drawback in this approach is that an a priori knowledge of $\bar s\geq s_\star$ is needed in order to use the quasi-posterior distribution with a possible risk of misspecification.
\vspace{-0.6cm}
\begin{flushright}
$\square$
\end{flushright}
\end{remark}

We now show that when the non-zero components of $\theta_\star$ are sufficiently large, $\Pi$  achieves perfect model selection. %To show this we introduce some notations. Given $\theta\in\rset^p$, and $\delta\in\Delta\setminus\{0\}$, we write $[\theta]_\delta$ to denote the  $\delta$-selected components of $\theta$ listed in their order of appearance: $[\theta]_\delta=(\theta_j,\;j\in\{1\leq k\leq p:\;\delta_k=1\})\in\rset^{\|\delta\|_0}$. Conversely, if $u\in\rset^{\|\delta\|_0}$, we write $(u,0)_\delta$ to denote the element of $\rset^p_\delta$ such that $[(u,0)_\delta]_\delta=u$. 
Given $\delta\in\Delta_{\bar s}$ we define the function $\ell^{[\delta]}(\cdot;z):\;\rset^{\|\delta\|_0}\to\rset$ by $\ell^{[\delta]}(u;z)\eqdef \ell((u,0)_\delta;z)$. We then introduce the estimators
\begin{equation}\label{freq:est}
\hat\theta_\delta(z)\eqdef  \argmax_{u\in\rset^{\|\delta\|_0}}\; \ell^{[\delta]}(u;z),\;\;\;z\in\Zset.\end{equation}
When $\delta=\delta_\star$ we write $\hat\theta_\star(z)$.  At times, to shorten the notation we will omit the data $z$ and write $\hat\theta_\delta$ instead of $\hat\theta_\delta(z)$. Recall for $z\in\e_1(\bar s)$ the functions $\ell^{[\delta]}(\cdot;z)$ are strongly concave. Therefore for $z\in\e_1(\bar s)$,  the estimators $\hat\theta_\delta$ are well-defined for all $\delta\in\Delta_{\bar s}$. Omitting the data $z$, we will write $\mathcal{I}_{\delta}\in \rset^{\|\delta\|_0\times \|\delta\|_0}$ to denote the negative of the matrix of second derivatives of $u\mapsto \ell^{[\delta]}(u;z)$ evaluated at $\hat\theta_\delta(z)$. That is 
\[\mathcal{I}_\delta \eqdef -\nabla^{(2)}\ell^{[\delta]}(\hat\theta_\delta;z) \in\rset^{\|\delta\|_0\times \|\delta\|_0}.\]
Note that $\mathcal{I}_\delta$ is simply the sub-matrix of $\nabla^{(2)}\ell((\hat\theta_\delta,0)_\delta;z)$ obtained by taking the rows and columns for which $\delta_j=1$.  When $\delta=\delta_\star$, we will  write $\mathcal{I}$ instead of $\mathcal{I}_{\delta_\star}$. 
For $a>0$, and $\delta\in\Delta\setminus \{0\}$, we define
\[\varpi(\delta,a;z)\eqdef \;\;\sup_{u\in\rset^{\|\delta\|_0}:\;\|u-\hat\theta_\delta\|_2\leq a}\;\;\max_{1\leq i,j,k\leq \|\delta\|_0} \left|\frac{\partial^3\ell^{[\delta]}(u;z)}{\partial u_i\partial u_j\partial u_k}\right|.\]
$\varpi(\delta,a;z)$ measures the deviation of the log-quasi-likelihood from its quadratic approximation around $\hat\theta_\delta$. %For square matrices $A,B$ of equal dimensions, we write $A\preceq B$ to mean that $B-A$ is semi-positive definite.
With the rate $\epsilon$ as  in (\ref{eq:rate}), we will make the assumption that 
\begin{equation}\label{eq:strong:signal}
\min_{j:\;\delta_{\star j}=1} |\theta_{\star j}| >C\epsilon.\end{equation}
Clearly this assumption is unverifiable in practice since $\theta_\star$ is typically not known. However a strong signal assumption such as (\ref{eq:strong:signal}) is needed in one form or the other for exact model selection (\cite{narisetti:he:14,castillo:etal:14,yang:etal:15}).  Furthermore as we show in Section \ref{sec:lin:reg}, in specific models  (\ref{eq:strong:signal}) translates into a condition on the sample size $n$, which in some cases can help the user evaluates in practice whether (\ref{eq:strong:signal}) seems reasonable or not. An understanding of the behavior of $\Pi$ when (\ref{eq:strong:signal}) does not hold remains an interesting problem for future research.

One can readily observe that when (\ref{eq:strong:signal}) holds,  then the set $\cB^{(\delta)}$ introduced above is necessarily empty when $\delta$ does not contain the true model $\delta_\star$. In other words, when  (\ref{eq:strong:signal}) holds, the set $\cB$ defined in (\ref{def:set:B}) can be written as 
\[\cB= \bigcup_{\delta\in\A_{\bar s}} \{\delta\}\times \cB^{(\delta)},\]
where
\[ \A_{\bar s}\eqdef\{\delta\in\Delta:\;\|\delta\|_0\leq \bar s,\;\mbox{ and }\delta\supseteq\delta_\star\},\]
and we recall that the notation $\delta\supseteq\delta'$ means that $\delta_j=1$ whenever $\delta_j'=1$ for all $j$. More generally, for $j\geq 0$, we set
\[ \A_{s_\star +j} \eqdef\{\delta\in\Delta:\;\|\delta\|_0\leq s_\star +j,\;\delta\supseteq\delta_\star\},\;\mbox{ and }\;\; \cB_j= \bigcup_{\delta\in\A_{s_\star +j}} \{\delta\}\times \cB^{(\delta)}.\]
In particular $\cB_0 =\{\delta_\star\}\times \cB^{(\delta_\star)}$, and $(\delta,\theta)\in\cB_j$  implies that $\delta$ has at most $j$ false-positive (and no false-negative). We set 
\begin{multline*}
\e_{2}(\bar s) \eqdef \e_1(\bar s) \cap \bigcap_{j=1}^{\bar s-s_\star} \left\{z\in\Zset:\;\max_{\delta\in\A_{\bar s}:\;\|\delta\|_0=s_\star + j}\ell^{[\delta]}(\hat\theta_\delta;z) -\ell^{[\delta_\star]}(\hat\theta_\star;z)\leq \frac{ju}{2}\log(p) \right\},\end{multline*}
 which imposes a growth condition on the log-quasi-likelihood ratios of sparse sub-models. 
 
\begin{theorem}\label{thm:sel}
Assume H\ref{H1}-H\ref{H2}, and (\ref{eq:strong:signal}). Let $\bar s\geq s_\star$ be some arbitrary integer, and take $\e\subseteq\e_{2}(\bar s)$. For some constant $\underline{\kappa}>0$, suppose that for all $z\in\e$,
\begin{equation}\label{def:kappa:min}
\min_{\delta\in\A_{\bar s}}\; \inf_{u\in\rset^{\|\delta\|_0}:\;\|u-\hat\theta_\delta\|_2\leq 2\epsilon}\;\inf\left\{ \frac{v'\left(-\nabla^{(2)}\ell^{[\delta]}(u;z)\right)v}{\|v\|_2^2},\; v\in\rset^{\|\delta\|_0},\;v\neq 0\right\}\geq \underline{\kappa},\end{equation}
and 
\begin{equation}\label{def:kappa:bar:bis}
\max_{\delta\in\A_{\bar s}}\; \sup_{u\in\rset^{\|\delta\|_0}}\;\sup\left\{ \frac{v'\left(-\nabla^{(2)}\ell^{[\delta]}(u;z)\right)v}{\|v\|_2^2},\; v\in\rset^{\|\delta\|_0},\;v\neq 0\right\}\leq \bar \kappa,\end{equation}
where $\bar\kappa$ is as in H\ref{H1}.  Then it holds that for any $j\geq 1$
\begin{multline}\label{maineq:thm:sel}
\textbf{1}_{\e}(z)\left(1 - \Pi\left(\cB_j\vert z\right)\right)\\
\leq  8 e^{C_0(\rho_1\|\theta_\star\|_\infty\bar s^{1/2}\epsilon + \mathsf{a}_2\bar s^{3/2}\epsilon^3)} e^{\frac{2\mathsf{a}_2\bar s^{3}\epsilon}{\underline{\kappa}}} \left(\sqrt{\frac{\rho_1}{\underline{\kappa}}} \frac{1}{p^{\frac{u}{2}}}\right)^{j+1} +\textbf{1}_{\e}(z) \Pi(\cB^c\vert z),\end{multline}
provided  that $\underline{\kappa} p^u\geq 4\rho_1 $, and $(C-1)\epsilon \underline{\kappa}^{1/2} \geq 2(s_\star^{1/2}+1)$, where
$\mathsf{a}_2\eqdef \max_{\delta\in\A_{\bar s}}\;\varpi(\delta,(C+1)\epsilon;z)$, and $C_0$ some absolute constant.
\end{theorem}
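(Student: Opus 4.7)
The plan is to run a Laplace-approximation comparison between the mass $\Pi$ places on strict super-models of $\delta_\star$ and the mass on $\delta_\star$ itself. First I would write
\[1-\Pi(\cB_j\vert z)\;\leq\;\Pi(\cB\setminus\cB_j\vert z)\;+\;\Pi(\cB^c\vert z),\]
so that the second term is exactly what Theorem \ref{thm1} controls. For any $(\delta,\theta)\in\cB$ the definition of $\cB^{(\delta)}$ forces $\|\theta_\delta-\theta_\star\|_2\leq C\epsilon$; combined with the strong-signal hypothesis (\ref{eq:strong:signal}) this immediately rules out any $\delta$ missing a coordinate of $\delta_\star$, so $\cB\setminus\cB_j=\bigcup_{k=j+1}^{\bar s-s_\star}\bigcup_{\delta\supseteq\delta_\star,\,\|\delta\|_0=s_\star+k}\{\delta\}\times\cB^{(\delta)}$. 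Using that the normalizing constant of $\Pi$ is at least $\int_{\cB^{(\delta_\star)}}\pi(\delta_\star,\theta;z)\,d\theta$, I would bound
\[\Pi(\cB\setminus\cB_j\vert z)\;\leq\;\sum_{k\geq j+1}\sum_{\delta}\frac{\int_{\cB^{(\delta)}}\pi(\delta,\theta;z)\,d\theta}{\int_{\cB^{(\delta_\star)}}\pi(\delta_\star,\theta;z)\,d\theta},\]
reducing everything to a numerator/denominator computation.

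Second, for each $\delta\supseteq\delta_\star$ I would apply Laplace's method to $\int_{\cB^{(\delta)}}\pi(\delta,\theta;z)\,d\theta$. The joint density (\ref{post:Pi}) factorizes conditionally on $\delta$ into a slab piece in $[\theta]_\delta$ and an independent spike Gaussian in $[\theta]_{\delta^c}$; the truncation $\|[\theta]_{\delta^c}\|_2\leq\sqrt{(1+C_1)\rho_0^{-1}p}$ in $\cB^{(\delta)}$ retains all but an exponentially small fraction of the spike mass by a standard $\chi^2$-tail estimate, so the spike contribution is essentially $1$. For the slab piece I would Taylor-expand $\ell^{[\delta]}(u;z)-\frac{\rho_1}{2}\|u\|_2^2$ around the restricted MLE $\hat\theta_\delta$; since $\nabla\ell^{[\delta]}(\hat\theta_\delta;z)=0$ the expansion is
\[\ell^{[\delta]}(u;z)-\tfrac{\rho_1}{2}\|u\|_2^2=\ell^{[\delta]}(\hat\theta_\delta;z)-\tfrac{\rho_1}{2}\|\hat\theta_\delta\|_2^2-\rho_1\hat\theta_\delta^\top(u-\hat\theta_\delta)-\tfrac{1}{2}(u-\hat\theta_\delta)^\top(\mathcal{I}_\delta+\rho_1I)(u-\hat\theta_\delta)+R_\delta(u),\]
with $|R_\delta(u)|\leq\mathsf{a}_2\|u-\hat\theta_\delta\|_2^3$ via the definition of $\varpi$. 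The restricted strong concavity on $\e_1(\bar s)$ together with (\ref{freq:est}) yields $\|\hat\theta_\delta-[\theta_\star]_\delta\|_2\leq\epsilon$, so the image of $\cB^{(\delta)}$ in $u$-coordinates lies inside a ball of radius $(C+1)\epsilon$ around $\hat\theta_\delta$; the cubic remainder is then at most $\mathsf{a}_2\bar s^{3/2}\epsilon^3$ and the linear shift at most $C_0\rho_1\|\theta_\star\|_\infty\bar s^{1/2}\epsilon$ in the exponent. The hypothesis $(C-1)\epsilon\underline{\kappa}^{1/2}\geq 2(s_\star^{1/2}+1)$ ensures the resulting truncated Gaussian captures a constant fraction of its full mass, giving
\[\int_{\cB^{(\delta)}}\pi(\delta,\theta;z)\,d\theta\;\approx\;\omega(\delta)\,\rho_1^{\|\delta\|_0/2}\,|\mathcal{I}_\delta+\rho_1 I|^{-1/2}\,e^{\ell^{[\delta]}(\hat\theta_\delta;z)-\tfrac{\rho_1}{2}\|\hat\theta_\delta\|_2^2}\]
up to the two slippage factors above.

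Third, I would form the ratio to the $\delta_\star$-contribution and bound each piece. The prior ratio is $(\q/(1-\q))^k=p^{-(u+1)k}$; the slab normalization contributes $\rho_1^{k/2}$; the log-quasi-likelihood gap $\ell^{[\delta]}(\hat\theta_\delta;z)-\ell^{[\delta_\star]}(\hat\theta_\star;z)\leq(ku/2)\log p$ on $\e_2(\bar s)$ contributes at most $p^{ku/2}$; and the determinant ratio $|\mathcal{I}+\rho_1 I|^{1/2}/|\mathcal{I}_\delta+\rho_1 I|^{1/2}$ is handled by isolating the $k$ extra eigenvalues (all $\geq\underline{\kappa}$ by (\ref{def:kappa:min})), which contributes $\underline{\kappa}^{-k/2}$, while the $s_\star$-block comparison exploits the fact that $\mathcal{I}_\delta$ restricted to the $\delta_\star$-coordinates differs from $\mathcal{I}$ by an operator-norm perturbation of order $\mathsf{a}_2\bar s^{1/2}\epsilon$ (third-derivative control applied to $\hat\theta_\delta-\hat\theta_\star$), which is the source of the $e^{2\mathsf{a}_2\bar s^3\epsilon/\underline{\kappa}}$ factor. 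A single $\delta$ with $\|\delta\|_0-s_\star=k$ therefore contributes $(\sqrt{\rho_1/\underline{\kappa}}\,p^{-u/2})^k$ times the slippage prefactors. Summing: there are $\binom{p-s_\star}{k}\leq p^k/k!$ such $\delta$'s, which absorbs one power of $p$ per index, and the condition $\underline{\kappa}p^u\geq 4\rho_1$ makes the resulting series geometric with ratio $\leq 1/2$, so $\sum_{k\geq j+1}$ is dominated by its first term $(k=j+1)$, yielding $(\sqrt{\rho_1/\underline{\kappa}}\,p^{-u/2})^{j+1}$ as in (\ref{maineq:thm:sel}).

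The main technical obstacle is the two-sided Laplace approximation with quantitative tracking of every prefactor. Upper-bounding the numerators is routine (the truncated Gaussian mass is at most the full Gaussian mass), but \emph{lower}-bounding the denominator $\int_{\cB^{(\delta_\star)}}\pi(\delta_\star,\theta;z)\,d\theta$ by a genuine Gaussian integral is delicate: $\cB^{(\delta_\star)}$ is a ball around $\theta_\star$ rather than around $\hat\theta_\star$, and one must show that the truncated Gaussian centered at $\hat\theta_\star$ with inverse-covariance $\mathcal{I}+\rho_1 I$ still captures a uniformly positive fraction of its mass on that ball; this is precisely where the two quantitative hypotheses $(C-1)\epsilon\underline{\kappa}^{1/2}\geq 2(s_\star^{1/2}+1)$ and $\underline{\kappa}p^u\geq 4\rho_1$ enter. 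Keeping the cubic and linear-shift slippage factors uniformly controlled over the exponentially many super-models in $\A_{\bar s}$ is the secondary bookkeeping challenge.
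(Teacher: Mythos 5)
Your proposal is correct and follows essentially the same route as the paper's proof: the same decomposition of $1-\Pi(\cB_j\vert z)$ into $\Pi(\cB^c\vert z)$ plus the mass on super-models with more than $j$ false positives, the same reduction to ratios of truncated integrals over $\cB^{(\delta)}$ and $\cB^{(\delta_\star)}$, the same Laplace expansion around $\hat\theta_\delta$ with the $\mathsf{a}_2$-cubic and $\rho_1\|\theta_\star\|_\infty\bar s^{1/2}\epsilon$ slippage terms, the likelihood-ratio control from $\e_2(\bar s)$, the interlacing plus third-derivative treatment of the determinant ratio, and the same counting/geometric-series step using $\underline{\kappa}p^u\geq 4\rho_1$. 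The only differences are cosmetic bookkeeping choices (e.g.\ carrying the Gaussian precision as $\mathcal{I}_\delta+\rho_1 I$ with a linear shift instead of bounding the prior quadratic separately, and splitting the determinant ratio by blocks rather than by evaluation points), which do not change the argument.
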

\begin{proof}
See Section \ref{sec:proof:thm:sel}.
\end{proof}

We note that $\cB_0 = \{\delta_\star\} \times \cB^{(\delta_\star)}\subset \{\delta_\star\}\times\rset^p$. Hence by choosing $j=0$,  (\ref{maineq:thm:sel}) provides a lower bound on the probability of perfect model selection $\Pi(\delta_\star\vert z)$. 

\begin{remark}
The left hand sides of (\ref{def:kappa:min}) and (\ref{def:kappa:bar:bis}) are restricted eigenvalues. We note that the infimum on $u$ in (\ref{def:kappa:min}) is taken over a small neighborhood of $\hat\theta_\delta$, which is an important detail that facilitates the application of the result. The main challenge  in using this result is bounding the probability of the event $\e_{2}(\bar s)$ (which deals with the behavior of the quasi-likelihood ratio statistics). For linear regression problems, this boils down to deviation bounds for projected Gaussian distributions as we show in Section \ref{sec:lin:reg}. An extension to generalized linear models via the Hanson-Wright inequality seems plausible although not pursed here.
\vspace{-0.4cm}
\begin{flushright}
$\square$
\end{flushright}
\end{remark}

\section{Posterior approximations}\label{sec:post:approx}
We show here that a  Bernstein-von Mises approximation holds in the  KL-divergence sense. %For $\delta\in\A$, we recall the sparsified log-quasi-likelihood functions $\ell^{[\delta]}(\cdot;z):\;\rset^{\|\delta\|_0}\to\rset$ introduced above, and the estimator $\hat\theta_\delta$ defined in (\ref{freq:est}).    
We consider the  distribution 
\begin{equation}\label{Pi:inf}
\Pi^{(\infty)}_\star(\delta,\rmd\theta\vert z)  \propto \textbf{1}_{\delta_\star}(\delta) e^{-\frac{1}{2}([\theta]_{\delta_\star}-\hat\theta_\star)'\mathcal{I}([\theta]_{\delta_\star}-\hat\theta_\star) -\frac{\rho_0}{2}\|\theta-\theta_{\delta_\star}\|_2^2}  \rmd\theta,
\end{equation}
which puts probability one on $\delta_\star$, and draws independently $[\theta]_{\delta_\star}\sim \textbf{N}(\hat\theta_\star,\mathcal{I}^{-1})$, and $[\theta]_{\delta_\star^c}\stackrel{i.i.d.}{\sim}\textbf{N}(0,\rho_0^{-1})$. Our version of the Bernstein-von Mises theorem  says that $\Pi$ behaves like  $\Pi_\star^{(\infty)}$.  If $\mu,\nu$ are two probability measures on some measurable space we define the Kulback-Leibler divergence (KL-divergence) of $\mu$ respect to $\nu$ as
\[\KL{\mu}{\nu} \eqdef \left\{ \begin{array}{ll}\int \log\left(\frac{\rmd\mu}{\rmd\nu}\right)\rmd\mu, & \mbox{ if } \mu\ll\nu \\ +\infty & \mbox{ otherwise}.\end{array}\right.\]
A Bernstein-von Mises approximation in the KL-divergence sense -- unlike the analogous result in the total variation metric --  requires a control of the tails of the log-quasi-likelihood. To limit the technical details we will focus  on the case where those tails are quadratic.

\begin{theorem}\label{thm:KL}
Assume H\ref{H1}-H\ref{H2}.  For some integer $\bar s\geq s_\star$, and some constant $\underline{\kappa}>0$, let $\e$ be some measurable subset of $\Zset$ such that  for all $z\in\e$, $\Pi(\delta_\star\vert z)\geq 1/2$,  (\ref{def:kappa:bar:bis}) holds with $\bar\kappa$ as in H\ref{H1}, and 
\begin{equation}\label{def:kappa:min:KL}
\min_{\delta\in\A_{\bar s}}\; \inf_{u\in\rset^{\|\delta\|_0}}\;\inf\left\{ \frac{v'\left(-\nabla^{(2)}\ell^{[\delta]}(u;z)\right)v}{\|v\|_2^2},\; v\in\rset^{\|\delta\|_0},\;v\neq 0\right\}\geq \underline{\kappa}.\end{equation}
Then there exists an absolute constants $C_0$ such that 
\begin{multline}\label{eq:main:bvm}
\textbf{1}_{\e}(z) \KL{\Pi_\star^{(\infty)}}{\Pi}  \leq   C_0 \left(\rho_1\bar s^{1/2}\epsilon + \mathsf{a}_2\bar s^{3/2}\epsilon^3\right)
+\frac{3\rho_1^2(\epsilon + \|\theta_\star\|_2)^2}{2(\rho_1+\bar \kappa)} \\
+C_0(\rho_1 +\bar\kappa)\epsilon^2\left(\frac{\bar\kappa}{\underline{\kappa}}\right)^{\frac{s_\star}{2}} e^{-\frac{(C-1)^2\epsilon^2\underline{\kappa}}{32}} + C_0(\rho_1 +\bar\kappa)e^{-p} +2\textbf{1}_{\e}(z) (1-\Pi(\delta_\star \vert z)), \end{multline}
provided that  $\underline{\kappa}(C-1)\epsilon \geq 4\max(\sqrt{s_\star\underline{\kappa}},\rho_1(\epsilon+ s_\star^{1/2}\|\theta_\star\|_\infty))$, where  $C$ is as in Theorem \ref{thm1}.
\end{theorem}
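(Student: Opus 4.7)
The plan is to exploit the product structure of both measures on $\{\delta_\star\}\times\rset^p$. Under $\Pi_\star^{(\infty)}$, $\delta=\delta_\star$ almost surely, with $[\theta]_{\delta_\star}\sim \textbf{N}(\hat\theta_\star,\mathcal{I}^{-1})$ independent of $[\theta]_{\delta_\star^c}\sim \textbf{N}(0,\rho_0^{-1})^{\otimes(p-s_\star)}$. Under $\Pi(\cdot\mid\delta_\star,z)$ the same decoupling holds: the $\delta_\star^c$-block is \emph{exactly} $\textbf{N}(0,\rho_0^{-1})^{\otimes(p-s_\star)}$ (this is what the Gaussian spike buys us), while $[\theta]_{\delta_\star}$ has density $\pi^{[\delta_\star]}(u)\propto\exp(\ell^{[\delta_\star]}(u;z)-\tfrac{\rho_1}{2}\|u\|_2^2)$. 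Since the $\delta_\star^c$-marginals agree,
\[\KL{\Pi_\star^{(\infty)}}{\Pi}=-\log\Pi(\delta_\star\mid z)+\KL{\textbf{N}(\hat\theta_\star,\mathcal{I}^{-1})}{\pi^{[\delta_\star]}}.\]
On $\e$ we have $\Pi(\delta_\star\mid z)\geq\tfrac12$, so the elementary inequality $-\log x\leq 2(1-x)$ on $[1/2,1]$ immediately produces the last term in~(\ref{eq:main:bvm}).

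For the remaining Gaussian-vs-posterior KL I would carry out a local Laplace expansion. Since $\nabla\ell^{[\delta_\star]}(\hat\theta_\star;z)=0$, Taylor's theorem gives
\[\ell^{[\delta_\star]}(u;z)=\ell^{[\delta_\star]}(\hat\theta_\star;z)-\tfrac12(u-\hat\theta_\star)'\mathcal{I}(u-\hat\theta_\star)+R(u),\]
with $|R(u)|\leq\tfrac16\mathsf{a}_2 s_\star^{3/2}\|u-\hat\theta_\star\|_2^3$ when $\|u-\hat\theta_\star\|_2\leq(C+1)\epsilon$. Completing the square absorbs the prior penalty $-\tfrac{\rho_1}{2}\|u\|_2^2$ into a pivotal Gaussian $\textbf{N}(\tilde\theta,\mathcal{J}^{-1})$ with $\mathcal{J}=\mathcal{I}+\rho_1 I$, $\tilde\theta=\mathcal{J}^{-1}\mathcal{I}\hat\theta_\star$, together with a scalar $-\tfrac{\rho_1^2}{2}\hat\theta_\star'\mathcal{J}^{-1}\hat\theta_\star$ from the completion. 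Evaluating the resulting log-normalizer by Gaussian integration and combining with the first and second moments of $q_1=\textbf{N}(\hat\theta_\star,\mathcal{I}^{-1})$, the KL takes the exact form
\[\KL{q_1}{\pi^{[\delta_\star]}}=\tfrac12\bigl(\rho_1\operatorname{tr}(\mathcal{I}^{-1})-\log\det(I+\rho_1\mathcal{I}^{-1})\bigr)+\tfrac{\rho_1^2}{2}\hat\theta_\star'\mathcal{J}^{-1}\hat\theta_\star+\log E_{\textbf{N}(\tilde\theta,\mathcal{J}^{-1})}[e^{R(u)}]-E_{q_1}[R(u)].\]
Using $x-\log(1+x)\leq x^2/2$ controls the first bracket, and the quadratic form in $\hat\theta_\star$, combined with the a posteriori estimate $\|\hat\theta_\star\|_2\leq\|\theta_\star\|_2+\epsilon$ (from Theorem~\ref{thm1} applied at $\delta=\delta_\star$) and the spectral bounds on $\mathcal{J}$, produces the prior-shift contribution $\tfrac{3\rho_1^2(\epsilon+\|\theta_\star\|_2)^2}{2(\rho_1+\bar\kappa)}$.

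The main obstacle is the pair of remainder integrals $E_{q_1}[R]$ and $\log E_{\textbf{N}(\tilde\theta,\mathcal{J}^{-1})}[e^R]$, because the cubic bound on $R$ only holds on the $(C+1)\epsilon$-ball. My plan is to split each integral at radius $(C-1)\epsilon$ around $\hat\theta_\star$. On the inner piece the cubic bound integrates, against either Gaussian, to a contribution of order $\mathsf{a}_2\bar s^{3/2}\epsilon^3$; the hypotheses $\underline{\kappa}(C-1)\epsilon\geq 4\sqrt{s_\star\underline{\kappa}}$ and $\underline{\kappa}(C-1)\epsilon\geq 4\rho_1(\epsilon+s_\star^{1/2}\|\theta_\star\|_\infty)$ guarantee respectively that $q_1$ concentrates in this ball and that the mean-shift $\|\hat\theta_\star-\tilde\theta\|_2=\rho_1\|\mathcal{J}^{-1}\hat\theta_\star\|_2$ keeps $\textbf{N}(\tilde\theta,\mathcal{J}^{-1})$ concentrated there as well. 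Outside the ball the cubic bound fails, and here the global strong-concavity hypothesis~(\ref{def:kappa:min:KL}) is essential: it yields $\ell^{[\delta_\star]}(\hat\theta_\star;z)-\ell^{[\delta_\star]}(u;z)\geq\tfrac{\underline{\kappa}}{2}\|u-\hat\theta_\star\|_2^2$ uniformly, which lets me dominate the integrand by a Gaussian. A $\chi^2$-style tail bound for $\|u-\hat\theta_\star\|_2>(C-1)\epsilon$, combined with the change-of-measure determinant factor $(\bar\kappa/\underline{\kappa})^{s_\star/2}$ arising from switching between $\mathcal{I}$- and $\mathcal{J}$-normalizations on the tail event, gives the tail term $C_0(\rho_1+\bar\kappa)\epsilon^2(\bar\kappa/\underline{\kappa})^{s_\star/2}e^{-(C-1)^2\epsilon^2\underline{\kappa}/32}$. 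The residual $C_0(\rho_1+\bar\kappa)e^{-p}$ comes from an analogous but much cheaper truncation of the $[\theta]_{\delta_\star^c}$ spike coordinates, needed only to align the normalization of the full product measure. Summing all pieces yields~(\ref{eq:main:bvm}).
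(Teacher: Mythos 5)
Your proposal is correct in outline, but it follows a genuinely different route from the paper's. The paper never conditions on $\delta$: it writes the density $\rmd\Pi_\star^{(\infty)}/\rmd\Pi$ explicitly as a tilted restriction $e^{-R}\textbf{1}_{\{\delta_\star\}\times\rset^p}$ as in (\ref{eq:density:bvm}), where $R$ collects the ridge difference $\tfrac{\rho_1}{2}(\|\hat\theta_\delta\|_2^2-\|\theta_\delta\|_2^2)$ and the third-order Taylor remainder, then applies the generic inequality (\ref{bound:KL})--(\ref{bound:KL:2}), $\KL{\mu}{\pi}\leq\int_A|f|\rmd\mu+2\int_A|f|\rmd\pi+2\pi(A^c)$, bounds $|R|$ pointwise on $\cB^{(\delta_\star)}$ via (\ref{bound:R1}) and quadratically off it via (\ref{bound:R2}), and controls the off-$\cB^{(\delta_\star)}$ second moments with Lemma \ref{bound:l2:dev}. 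You instead use the exact chain rule $\KL{\Pi_\star^{(\infty)}}{\Pi}=-\log\Pi(\delta_\star\vert z)+\KL{\textbf{N}(\hat\theta_\star,\mathcal{I}^{-1})}{\pi^{[\delta_\star]}}$, which is valid precisely because the $\delta_\star^c$ spike blocks cancel exactly, followed by an exact Laplace/Gaussian-KL identity; your identity (trace/log-det bracket, mean shift $\tfrac{\rho_1^2}{2}\hat\theta_\star'\mathcal{J}^{-1}\hat\theta_\star$, and the two remainder integrals) is correct, and your tail treatment (recombining $e^{R}$ with the reference Gaussian so that (\ref{def:kappa:min:KL}) makes the integrand log-concave with curvature at least $\underline{\kappa}+\rho_1$, producing the $(\bar\kappa/\underline{\kappa})^{s_\star/2}e^{-(C-1)^2\epsilon^2\underline{\kappa}/32}$-type factor) is in substance what Lemma \ref{bound:l2:dev} delivers for the paper. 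Your route dispenses with the factor-of-two structure and with the $e^{-p}$ term (an artifact of the spike-coordinate constraint in $\cB^{(\delta_\star)}$), so it proves a slightly cleaner bound from which (\ref{eq:main:bvm}) follows a fortiori; the paper's route, by folding the ridge term into $R$ and bounding it pointwise on $\cB^{(\delta_\star)}$, never has to evaluate the Gaussian-KL pieces at all, and its formulation for an arbitrary measure $q$ supported on $\{\delta_\star\}\times\rset^p$ is reused verbatim in the proof of Theorem \ref{thm:VA}.

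Two of your stated justifications do not literally deliver the terms you claim, though both are repairable under the theorem's proviso. First, the spectral bound on $\mathcal{J}=\mathcal{I}+\rho_1 I$ gives $\lambda_{\textsf{min}}(\mathcal{J})\geq\rho_1+\underline{\kappa}$, not $\rho_1+\bar\kappa$, so your mean-shift term is only bounded by $\rho_1^2(\epsilon+\|\theta_\star\|_2)^2/(2(\rho_1+\underline{\kappa}))$, which can exceed the displayed $3\rho_1^2(\epsilon+\|\theta_\star\|_2)^2/(2(\rho_1+\bar\kappa))$ by a factor of order $\bar\kappa/\underline{\kappa}$; in the paper that denominator is a Young-inequality weight in (\ref{bound:R2}), not a spectral fact. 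The repair is to use $\underline{\kappa}(C-1)\epsilon\geq 4\rho_1(\epsilon+s_\star^{1/2}\|\theta_\star\|_\infty)$ together with $\|\hat\theta_\star\|_2\leq\|\theta_\star\|_2+\epsilon$ (which, as in the paper's own use of (\ref{eq:bound:freq:est}), tacitly requires the curvature/score control behind $\e_1(\bar s)$) to get $\tfrac{\rho_1^2}{2}\hat\theta_\star'\mathcal{J}^{-1}\hat\theta_\star\leq C_0\rho_1\epsilon(\epsilon+s_\star^{1/2}\|\theta_\star\|_\infty)$, which is absorbed into $C_0\rho_1\bar s^{1/2}\epsilon$ under the same constant conventions the paper uses for (\ref{bound:R1}). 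Second, ``controls the first bracket'' also needs the proviso: $\tfrac12\bigl(\rho_1\textsf{Tr}(\mathcal{I}^{-1})-\log\det(I+\rho_1\mathcal{I}^{-1})\bigr)\leq s_\star\rho_1^2/(4\underline{\kappa}^2)$, and only via $\rho_1\leq(C-1)\underline{\kappa}/4$ and $\underline{\kappa}\geq 16 s_\star/((C-1)^2\epsilon^2)$ does this become $\leq C_0\rho_1\epsilon^2$, again absorbable in the leading term. With these two points spelled out, your argument establishes (\ref{eq:main:bvm}).
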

\begin{proof}
See Section \ref{sec:proof:thm:KL}.
\end{proof}

\begin{remark}
The upper bound in (\ref{eq:main:bvm}) implies an upper bound on the total variation distance between $\Pi$ and $\Pi_\star^{(\infty)}$ via Pinsker's inequality (see e.g. \cite{boucheron:eta:13}~Theorem 4.19).  The leading term in (\ref{eq:main:bvm}) is typically $C_0(\rho\bar s^{1/2}\epsilon + \mathsf{a}_2\bar s^{3/2}\epsilon^3)$ which gives a non-trivial convergence rate in the Bernstein-von Mises approximation. %With a bit more work one can also control Wasserstein metrics via weighted Pinsker's inequalities (\cite{bolley:villani:05}).
\vspace{-0.6cm}
\begin{flushright}
$\square$
\end{flushright}
\end{remark}

%\section{Implications for computational methods}
%We will show now that the results developed above also imply favorable behavior for variational approximation and Markov Chain Monte Carlo.

\subsection{Implications for variational approximations}
%Although various Markov Chain Monte Carlo (MCMC) algorithms can be easily designed to sample from $\Pi$, the computational complexities of these algorithms is $O(p^3)$ per iteration in the best case. 
When dealing with very large scale problems, practitioners often turn to variational approximation methods to obtain fast approximations of $\Pi$. We explore some implications of  Theorem \ref{thm:KL}   on the behavior of variational approximation methods in the high-dimensional setting. Let $\S\in\{0,1\}^{p\times p}$ be a symmetric matrix, and let $\M^+_p(\S)$ be the set of all $p\times p$ symmetric positive definite (spd) matrices with sparsity pattern $\S$ (that is $M\in\M_p^+(\S)$ means that $\S\cdot M= M$, where $A\cdot B$ is the component-wise product of $A,B$).  We assume in addition that $\S$ is such that if $M$ is spd then $\S\cdot M$ is also spd. We consider the  family $\mathcal{Q}\eqdef\{Q_\Psi,\;\Psi\}$ of probability measures on $\Delta\times\rset^p$, indexed by $\Psi = (q,\mu,C)\in (0,1)^p\times \rset^p \times \M_p^+(\S)$, where
\begin{equation}\label{def:fam:Q}
Q_\Psi(\rmd \delta,\rmd \theta) = \prod_{j=1}^p \textbf{Ber}(q_j)(\rmd\delta_j)   \textbf{N}_p(\mu,C)(\theta)\rmd\theta,\end{equation}
 In these definitions $\textbf{Ber}(\alpha)(\rmd x)$ is the probability measure on $\{0,1\}$ that assigns probability $\alpha$ to $1$, and $\textbf{N}_p(m,V)(\cdot)$ is the density of $p$-dimensional Gaussian distribution $\textbf{N}_p(m,V)$. Let $Q$ be the minimizer of the KL-divergence $\KL{Q}{\Pi}$ over the family $\mathcal{Q}$:
\begin{equation}\label{Q:star}
Q \eqdef \argmin_{Q\in\mathcal{Q}}\;  \KL{Q}{\Pi}.\end{equation}
We call $Q$ the variational approximation of $\Pi$ over the family $\mathcal{Q}$. Although not shown in the notation, $Q$ depends on the data $z$. We will consider the following examples.

\begin{example}[Skinny variational approximation]
If $\S=I_p$, then $Q$ corresponds to a  mean-field variational approximation of $\Pi$. We will refer to this approximation below as the skinny variational approximation (skinny-VA) of $\Pi$.
\end{example}

\begin{example}[full and midsize variational approximations]
If $\S$ is taken as the full matrix with all entries equal to $1$, we will refer to $Q$ as the full variational approximation (full-VA) of $\Pi$.  More generally let $\delta^{(\textsf{i})}$ be some element of $\{0,1\}^p$ that we call a template. Ideally we want $\delta^{(i)}$ to be sparse and to contain the true model, but this needs not be assumed. We then define $\S$ as follows:  $\S_{ij}=1$ if $i=j$, and $\S_{ij}=\delta^{(\textsf{i})}_i\delta^{(\textsf{i})}_j$ if $i\neq j$. If $\delta^{(\textsf{i})}$ is sparse, matrices $M\in \M_p^+(\S)$ are also sparse. In that case we call $Q$ a midsize variational approximation (midsize-VA) of $\Pi$. We note that we also recover the skinny-VA by taking $\delta^{(\textsf{i})}= {\bf 0}_p$, and we recover the full-VA by taking $\delta^{(\textsf{i})}$ as the vector with components equal to $1$.
\end{example}

The appeal of variational approximation methods is that  $Q$ can be approximated using algorithms that are order of magnitude faster than MCMC. We note however that the optimization problem in (\ref{Q:star}) is non-convex in general. Hence,  convergence guarantees for these algorithms are difficult to establish.  We do not address these issues here. Instead we would like to explore the behavior of $Q$ in view of Theorem \ref{thm:KL}.  Let us rewrite the distribution $\Pi^{(\infty)}_\star$ in (\ref{Pi:inf}) as
\begin{equation*}
\Pi^{(\infty)}_\star(\delta,\rmd\theta\vert z)  \propto \textbf{1}_{\delta_\star}(\delta) e^{-\frac{1}{2}(\theta - \hat\theta_\star)' \bar{\mathcal{I}}_\gamma (\theta-\hat\theta_\star)}  \rmd\theta,
\end{equation*}
where we abuse notation to write $(\hat\theta_\star,0)_{\delta_\star}$ as $\hat\theta_\star$, and $\bar{ \mathcal{I}}_\gamma\in\rset^{p\times p}$ is such that $[\bar{ \mathcal{I}}_\gamma]_{\delta_\star,\delta_\star} = \mathcal{I}$, $ [\bar{ \mathcal{I}}_\gamma]_{\delta_\star,\delta_\star^c} = [\bar{ \mathcal{I}}_\gamma]_{\delta_\star^c,\delta_\star}' = 0$, and $[\bar{ \mathcal{I}}_\gamma]_{\delta_\star^c,\delta_\star^c} = (1/\gamma)I_{p-s_\star}$. Then we set 
\begin{equation}\label{Pi:inf:tilde}
\tilde\Pi^{(\infty)}_\star(\delta,\rmd\theta\vert z)  \propto \textbf{1}_{\delta_\star}(\delta) e^{-\frac{1}{2}(\theta - \hat\theta_\star)' \left(\S\cdot \bar{\mathcal{I}}_\gamma\right) (\theta-\hat\theta_\star)}  \rmd\theta.
\end{equation}
 The total variation metric between two probability measure is defined as
\[\|\mu-\nu\|_{\tv} \eqdef \sup_{A\;\mbox{ meas.}}\; \left(\mu(A) -\nu(A)\right).\]

\begin{theorem}\label{thm:VA}
Assume H\ref{H1}-H\ref{H2}. For all $z\in\Zset$ such that $\Pi(\cdot\vert z)$ and $\Pi_\star^{(\infty)}(\cdot\vert z)$ are well-defined we have 
\begin{equation}\label{thm:VA:eq}
\|Q - \tilde\Pi_\star^{(\infty)}\|_\tv^2 \leq  8\zeta + 16 \int_{\delta_\star\times\rset^p} \log\left(\frac{\rmd \Pi^{(\infty)}_\star}{\rmd \Pi}\right)\rmd \tilde\Pi_\star^{(\infty)}, \end{equation}
where
 \begin{equation}\label{def:zeta1:zeta2}
 \zeta =\log\left(\frac{\det(\bar{\mathcal{I}}_\gamma)}{\det(\S\cdot\bar{\mathcal{I}}_\gamma)}\right)  + \textsf{Tr}\left(\bar{\mathcal{I}}_\gamma^{-1}(\S\cdot \bar{\mathcal{I}}_\gamma)\right) - p.
 \end{equation}
\end{theorem}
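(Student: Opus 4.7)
The plan is to exploit the KL-minimality of $Q$ in (\ref{Q:star}) by using $\tilde\Pi_\star^{(\infty)}$ as a competitor, and to convert the resulting KL bound to total variation via Pinsker's inequality. The first task is to verify that $\tilde\Pi_\star^{(\infty)} \in \mathcal{Q}$: it is (a limit of) a product of degenerate Bernoullis concentrated at $\delta_\star$ and a Gaussian with mean $\hat\theta_\star$ and precision $\S\cdot\bar{\mathcal{I}}_\gamma$. The structural hypothesis on $\S$ (preservation of positive-definiteness under componentwise multiplication by $\S$, which in the three concrete examples is augmented by closure of the sparsity pattern under matrix inversion) guarantees that the covariance $(\S\cdot\bar{\mathcal{I}}_\gamma)^{-1}$ lies in $\M_p^+(\S)$.

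With this membership, the variational minimality of $Q$ gives $\KL{Q}{\Pi} \leq \KL{\tilde\Pi_\star^{(\infty)}}{\Pi}$. I would then apply the triangle inequality by routing through $\Pi_\star^{(\infty)}$,
\[
\|Q - \tilde\Pi_\star^{(\infty)}\|_\tv \leq \|Q - \Pi\|_\tv + \|\Pi - \Pi_\star^{(\infty)}\|_\tv + \|\Pi_\star^{(\infty)} - \tilde\Pi_\star^{(\infty)}\|_\tv,
\]
square using $(a+b+c)^2 \leq 4(a^2 + b^2 + c^2)$, and apply Pinsker's inequality to each summand. The third summand is controlled directly by the Gaussian identity $2\KL{\Pi_\star^{(\infty)}}{\tilde\Pi_\star^{(\infty)}} = \zeta$, which is a standard trace/log-det computation valid because both measures put mass one on $\delta_\star$, share mean $\hat\theta_\star$, and have precisions $\bar{\mathcal{I}}_\gamma$ and $\S\cdot\bar{\mathcal{I}}_\gamma$. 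For the first two summands, the chain-rule decomposition
\[
\KL{\tilde\Pi_\star^{(\infty)}}{\Pi} = \KL{\tilde\Pi_\star^{(\infty)}}{\Pi_\star^{(\infty)}} + \int \log\bigl(d\Pi_\star^{(\infty)}/d\Pi\bigr)\,d\tilde\Pi_\star^{(\infty)}
\]
produces the integral appearing on the right-hand side of (\ref{thm:VA:eq}); gathering all the constants yields the advertised bound $8\zeta + 16\int\cdots\,d\tilde\Pi_\star^{(\infty)}$.

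The main technical subtlety is the asymmetry of the KL divergence: the chain-rule term naturally produces $\KL{\tilde\Pi_\star^{(\infty)}}{\Pi_\star^{(\infty)}}$, while $\zeta = 2\KL{\Pi_\star^{(\infty)}}{\tilde\Pi_\star^{(\infty)}}$ is the opposite direction and, on the eigenvalue level, a genuinely different functional of the relative spectrum of $\bar{\mathcal{I}}_\gamma$ and $\S\cdot\bar{\mathcal{I}}_\gamma$. Routing the triangle inequality through $\Pi_\star^{(\infty)}$ — rather than directly going $Q\to\Pi\to\tilde\Pi_\star^{(\infty)}$ — is precisely what allows the $\zeta$-contribution to be introduced through Pinsker applied to $\|\Pi_\star^{(\infty)} - \tilde\Pi_\star^{(\infty)}\|_\tv$ in the favourable direction, so that only the direction matching the definition of $\zeta$ enters the final bound. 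Carefully verifying the membership $\tilde\Pi_\star^{(\infty)} \in \mathcal{Q}$ case by case, and tracking the constants through the squaring and Pinsker steps, account for the somewhat loose coefficients $8$ and $16$ on the right-hand side.
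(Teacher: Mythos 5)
There is a genuine gap: the triangle-inequality-plus-Pinsker route cannot produce the right-hand side of (\ref{thm:VA:eq}). Two of your three legs turn into quantities that simply do not appear in, and are not dominated by, the stated bound. The middle leg $\|\Pi-\Pi_\star^{(\infty)}\|_\tv$ becomes, at best, $\tfrac12\KL{\Pi_\star^{(\infty)}}{\Pi}=\tfrac12\int_{\delta_\star\times\rset^p}\log\bigl(\rmd\Pi_\star^{(\infty)}/\rmd\Pi\bigr)\rmd\Pi_\star^{(\infty)}$, an integral against $\Pi_\star^{(\infty)}$, whereas the theorem's integral is against $\tilde\Pi_\star^{(\infty)}$; these coincide only when $\S\cdot\bar{\mathcal{I}}_\gamma=\bar{\mathcal{I}}_\gamma$ and neither dominates the other in general (and the other Pinsker direction, $\KL{\Pi}{\Pi_\star^{(\infty)}}$, is typically $+\infty$ since $\Pi$ charges $\delta\neq\delta_\star$). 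For the first leg, minimality gives $\KL{Q}{\Pi}\leq\KL{\tilde\Pi_\star^{(\infty)}}{\Pi}$, and your own chain rule then leaves behind $\KL{\tilde\Pi_\star^{(\infty)}}{\Pi_\star^{(\infty)}}$ — exactly the "wrong-direction" Gaussian KL you flag as not comparable to $\zeta$; routing the triangle inequality through $\Pi_\star^{(\infty)}$ only helps the third leg and does nothing to remove this term from the first. So after gathering constants you obtain a bound containing $\KL{\Pi_\star^{(\infty)}}{\Pi}$ and $\KL{\tilde\Pi_\star^{(\infty)}}{\Pi_\star^{(\infty)}}$, not $8\zeta+16\int\log(\rmd\Pi_\star^{(\infty)}/\rmd\Pi)\rmd\tilde\Pi_\star^{(\infty)}$. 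Note also that the theorem is a purely deterministic statement for any admissible $z$, with no assumption that $\Pi$ is close to $\Pi_\star^{(\infty)}$ or that $\KL{Q}{\Pi}$ is small, so any argument that passes through $\|Q-\Pi\|_\tv$ and $\|\Pi-\Pi_\star^{(\infty)}\|_\tv$ is structurally mismatched with the claim. A secondary gap is your membership step: since $q_j\in(0,1)$, the point mass at $\delta_\star$ is not in $\mathcal{Q}$, and "$\KL{Q}{\Pi}\leq\KL{\tilde\Pi_\star^{(\infty)}}{\Pi}$ by taking limits" is not automatic because KL is only lower semicontinuous, which is the wrong direction for this transfer.

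The paper's proof avoids Pinsker and never compares $Q$ with $\Pi$ in total variation. Its key tool is the strong convexity of the KL divergence in its first argument (Lemma \ref{lem:strong:conv:kl}): with $\tilde Q\in\mathcal{Q}$ a genuine competitor (independent Bernoullis with probabilities $\alpha$ on $\delta_\star$ and $1-\alpha$ off it, Gaussian part with precision $\S\cdot\bar{\mathcal{I}}_\gamma$), the strong-convexity inequality at $t=1/2$, together with $\KL{\tfrac12 Q+\tfrac12\tilde Q}{\Pi}\geq 0$ and $\KL{Q}{\Pi}\leq\KL{\tilde Q}{\Pi}$, gives directly $\|Q-\tilde Q\|_\tv^2\leq 8\,\KL{\tilde Q}{\Pi}$ — a bound on the distance between the minimizer and the competitor by the competitor's KL alone. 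One then splits $\KL{\tilde Q}{\Pi}$ over $\{\delta_\star\}\times\rset^p$ and its complement, uses $\rmd\tilde Q/\rmd\Pi=\tilde Q(\{\delta_\star\}\times\rset^p)\,(\rmd\tilde\Pi_\star^{(\infty)}/\rmd\Pi_\star^{(\infty)})(\rmd\Pi_\star^{(\infty)}/\rmd\Pi)$ on $\{\delta_\star\}\times\rset^p$, and lets $\alpha\to1$, so that the off-$\delta_\star$ contribution and $\|\tilde Q-\tilde\Pi_\star^{(\infty)}\|_\tv$ vanish; what survives is $16\bigl[\KL{\tilde\Pi_\star^{(\infty)}}{\Pi_\star^{(\infty)}}+\int_{\delta_\star\times\rset^p}\log(\rmd\Pi_\star^{(\infty)}/\rmd\Pi)\rmd\tilde\Pi_\star^{(\infty)}\bigr]$, and the Gaussian KL computation (Lemma \ref{lem:KL:Gaussian}) identifies the first term with $\zeta/2$, yielding $8\zeta$. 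In particular, the asymmetry you worried about never forces a triangle detour: the Gaussian KL that arises is the chain-rule one, and it is evaluated in closed form rather than compared to its reverse. If you want to salvage your write-up, replace the triangle/Pinsker steps by this strong-convexity argument and run it with $\tilde Q_\alpha\in\mathcal{Q}$, passing to the limit $\alpha\to1$ only in the final inequality.
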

\begin{proof}
See Section \ref{sec:proof:thm:VA}.
\end{proof}

\begin{remark}
As we show below in the proof of Theorem \ref{thm:KL}, the integral on the right size of (\ref{thm:VA:eq}) behaves like $\KL{\Pi_\star^{(\infty)}}{\Pi}$, which can be shown to vanish using the Bernstein-von Mises theorem (Theorem \ref{thm:KL}) under appropriate regularity conditions. In this case, whether $Q$ behaves like $\tilde\Pi_\star^{(\infty)}$ can be deduced from the behavior of $\zeta$, a term that is easier to analyze.  For instance for the full-VA $\zeta=0$. More generally for any midsize-VA such that $\delta^{(\textsf{i})}\supseteq\delta_\star$, we have $\zeta=0$. In the case of the skinny-VA (mean field variational approximation), $\zeta>0$ in general, but  $\zeta=o(1)$  when the off-diagonal elements of the information matrix $\mathcal{I}$ are $o(1)$.
\vspace{-0.6cm}
\begin{flushright}
$\square$
\end{flushright}
\end{remark}

\begin{remark}
Theorem \ref{thm:VA}  gives an approximation (in total variation sense) of the variational approximation. To the exception of (\cite{wang:blei:18}) most of the theoretical work on variational approximation methods have focused on concentration: whether the variational approximation put most of its probability mass around the true value (see e.g. \cite{alquier:ridgeway:17} for some recent results, and \cite{wang:blei:18} for an overview of the literature), without addressing whether other aspects of the distribution are recovered well. One important limitation of \cite{wang:blei:18} which makes the extension of their approach to high-dimension problematic is  their reliance on a) local asymptotic normality assumptions, and b) the assumption that the variational family can be viewed as a re-scaled version of some sample-size independent family.
\vspace{-0.5cm}
\begin{flushright}
$\square$
\end{flushright}
\end{remark}

\section{Examples}\label{sec:ex}
\subsection{Gaussian graphical models via Linear regressions}
\label{sec:lin:reg}
Fitting large sparse graphical models in the Bayesian framework is computationally challenging (\cite{dobra:11a,dobra:11b,khondkeretal13,peterson:etal:14,banerjee:ghosal13}). A quasi-Bayesian approach based on the neighborhood selection of \cite{meinshausen06}  offers a simple, yet effective alternative. The idea was explored in \cite{atchade:15:c} using point-mass spike and slab priors. The approach  proposed in this paper yields a highly scalable quasi-posterior distribution  with equally strong theoretical backing. We make   the following data generating assumption.

\begin{assumptionB}\label{H:lin:mod}
$Z\in\rset^{n\times (p+1)}$ is a random matrix with i.i.d. rows from $\textbf{N}_{p+1}(0,\vartheta_\star^{-1})$ for some positive definite matrix $\vartheta_\star$.  We set $\Sigma\eqdef \vartheta^{-1}_\star$ and also assume that as $p\to\infty$,
\begin{equation}\label{lm:eig:assump}
\frac{1}{\lambda_{\textsf{min}}(\Sigma)} + \lambda_{\textsf{max}}(\Sigma) = O(1).\end{equation}
%For some parameter $\theta_\star\in\rset^p\setminus\{0\}$, 
%\[Z = X\theta_\star + V,\]
%where $V$ is mean-zero $\rset^n$-valued random vector. Furthermore, there exists $\sigma>0$ such that for all $u\in\rset^n$, with $\|u\|_2=1$, it holds
%\[\PP_\star\left(|\pscal{u}{V}|>t\right) \leq 2e^{-\frac{t^2}{2\sigma^2}},\;\;t>0.\]
\end{assumptionB}
\medskip

\begin{remark}
The assumption in (\ref{lm:eig:assump}) restricts our focus to  problems that in some sense do not  become intrinsically harder as $p$ increases. It	 can be relaxed by tracking more carefully the constants  in the proofs.
\vspace{-0.6cm}
\begin{flushright}
$\square$
\end{flushright}
\end{remark}

Given the data matrix $Z\in\rset^{n\times (p+1)}$, we wish to estimate the precision matrix $\vartheta_\star$. Instead of a full likelihood approach (explored in the references cited above), we consider a pseudo-likelihood approach that estimates each column of $\vartheta_\star$ separately. Given $1\leq j\leq p+1$, we partition the data matrix $Z$ as $Z = [Y^{(j)},X^{(j)}]$, where $Y^{(j)}\in\rset^n$ denotes the $j$-th column of $Z$,  and $X^{(j)}\in\rset^{n\times p}$ collects the remaining columns.  In that case the conditional distribution of $Y^{(j)}$ given $X^{(j)}$ is 
\[\textbf{N}_n\left(X^{(j)}\theta_\star^{(j)},\frac{1}{[\vartheta_{\star}]_{jj}}I_n\right),\] 
where $\theta_\star^{(j)} \eqdef (-1/[\vartheta_{\star}]_{jj})[\vartheta_{\star}]_{-j,j}\in\rset^p$.  Therefore, for some user-defined parameters $\sigma_j>0$, $\rho_{0,j}>0$, and $\rho_{1,j}$ the quasi-posterior distribution on $\Delta\times \rset^p$ given by
\begin{multline}\label{post:linmod}
\Pi^{(j)}(\delta,\rmd\theta\vert Z)
\propto  \\
e^{-\frac{1}{2\sigma_j^2}\|Y^{(j)}-X^{(j)}\theta_\delta\|_2^2} \omega(\delta) \left(\frac{\rho_{1,j}}{2\pi}\right)^{\frac{\|\delta\|_0}{2}} \left(\frac{\rho_{0,j}}{2\pi}\right)^{\frac{p-\|\delta\|_0}{2}} e^{-\frac{\rho_{1,j}}{2}\|\theta_\delta\|_2^2}e^{-\frac{\rho_{0,j}}{2}\|\theta-\theta_\delta\|_2^2} \rmd \theta,
\end{multline}
can be used to estimate $\theta_\star^{(j)}$, and hence the $j$-th column of $\vartheta_\star$, if an estimate of $[\vartheta_{\star}]_{jj}$ is available\footnote{A full Bayesian approach can be adopted to estimate both $\theta_\star^{(j)}$ and $[\vartheta_{\star}]_{jj}$. But for simplicity's sake we will not pursue this here}. This is basically the quasi-Bayesian analog of the neighborhood selection of \cite{meinshausen06}. The same procedure can be repeated -- possibly in parallel --  to recover the entire matrix $\vartheta_\star$. We use the theory of Section \ref{sec:sparsity}-\ref{sec:post:approx} to describe the behavior of this approach to infer $\vartheta_\star$.  We focus on the case where $n=o(p)$, and we recall that $C_0$ is an absolute constant whose value may be different from one expression to the other. Let $\Pi_\star^{(j,\infty)}$ be the corresponding limiting distribution of $\Pi^{(j)}$ as defined in (\ref{Pi:inf}), and let $\tilde\Pi_\star^{(j,\infty)}$ be the corresponding approximation given in (\ref{Pi:inf:tilde}). In this particular case, $\Pi_\star^{(j,\infty)}$ is the probability measure on $\Delta\times\rset^p$ that  puts probability one on $\delta_\star^{(j)}$ (the support of $\theta_\star^{(j)}$),  draws $[\theta]_{\delta_\star^{(j)}}\sim \textbf{N}\left(\hat\theta_\star^{(j)},\sigma_j^2(X_{\delta_\star^{(j)}}'X_{\delta_\star^{(j)}})^{-1}\right)$, and draws independently  all other components i.i.d. from $\textbf{N}(0,\rho_0^{-1})$, where $\hat\theta_\star^{(j)}$ is the OLS estimator $(X_{\delta_\star^{(j)}}X_{\delta_\star^{(j)}})^{-1}X_{\delta_\star^{(j)}}'Y^{(j)}$. We set $s_\star^{(j)} \eqdef \|\theta_\star^{(j)}\|_0$. Let $Q^{(j)}$ denote the variational approximation of $\Pi^{(j)}$ based on the family (\ref{def:fam:Q}) with sparsity pattern $\S^{(j)}$, and let $\zeta_j$ denote the corresponding term in (\ref{def:zeta1:zeta2}).

\begin{corollary}\label{coro:lm}
Assume H\ref{H2}, B\ref{H:lin:mod}, and  suppose that $s_\star^{(j)}>0$, $\max_j\|\theta_\star^{(j)}\|_\infty=O(1)$, and $\max_j s_\star^{(j)}=O(\log(p))$ as $p\to\infty$. Suppose also that $u>2$, and  $u\sigma_j^2[\vartheta_{\star}]_{jj}\geq 16$. Choose  the prior parameter $\rho_{1,j}$  as
\[ \rho_{1,j} = \sqrt{\frac{\log(p)}{n}}.\]
Set
\[\bar s^{(j)}  \eqdef s_\star^{(j)}\left( 1 + \frac{6}{u}\right) + \frac{u}{4},\;\;\;\epsilon^{(j)} \eqdef  C_0\sqrt{\frac{(\bar s^{(j)}+s_\star^{(j)})\log(p)}{[\vartheta_\star]_{jj}\;n}},\;\;\mbox{ and }\;\;\; \bar s = \max_j \bar s^{(j)}.\]
Suppose that the sample size  $n$ satisfies $n=o(p)$, as $p\to\infty$, and
\[
 n\geq C_0 \bar s \log(p),\]
and the strong signal assumption 
\begin{equation}\label{tech:cond:lm}
\min_{k:\;|\theta^{(j)}_{\star,k}| > 0} |\theta^{(j)}_{\star,k}| > C_0 \epsilon^{(j)}\end{equation}
holds. Then there exists a measurable set $\G$ with $\PP_\star(Z\notin\G)\to 0$ as $p\to\infty$ such that  
 \begin{equation}\label{eq:thm:KL:lm}
\PE_\star\left[\textbf{1}_{\G}(Z) \max_{1\leq j\leq p+1}\KL{\Pi_\star^{(j,\infty)}}{\Pi^{(j)}}\right] \leq \frac{C_0\max_j(\bar s^{(j)} + s_\star^{(j)})}{\min_j [\vartheta_\star]_{jj}} \frac{\log(p)}{n} + \frac{C_0}{p^{1\wedge\left(\frac{u}{2}-1\right)}}.\end{equation}
Furthermore the  variational approximation $Q^{(j)}$ satisfies
\begin{multline}
\PE_\star\left[\textbf{1}_{\G}(Z) \max_{1\leq j\leq p+1} \|Q^{(j)} - \tilde\Pi_\star^{(j,\infty)}\|_\tv^2\right] \leq 8\PE_\star\left[\textbf{1}_{\G}(Z)\max_{1\leq j\leq p+1} \zeta^{(j)} \right]  \\
+ \frac{C_0\max_j(\bar s^{(j)} + s_\star^{(j)})}{\min_j [\vartheta_\star]_{jj}} \frac{\log(p)}{n} + \frac{C_0}{p^{1\wedge\left(\frac{u}{2}-1\right)}}.\end{multline}
\end{corollary}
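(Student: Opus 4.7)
The plan is to apply Theorems \ref{thm:KL} and \ref{thm:VA} to each quasi-posterior $\Pi^{(j)}$ separately, then union-bound over $j\in\{1,\ldots,p+1\}$. For each fixed $j$, the relevant quasi-likelihood
\[\ell^{(j)}(\theta;Z) = -\frac{1}{2\sigma_j^2}\|Y^{(j)} - X^{(j)}\theta\|_2^2\]
is quadratic, so $\L_{\theta_\star^{(j)}}(\theta;Z) = -(1/(2\sigma_j^2))(\theta-\theta_\star^{(j)})'X^{(j)\prime}X^{(j)}(\theta-\theta_\star^{(j)})$, and, writing $\xi^{(j)} \eqdef Y^{(j)} - X^{(j)}\theta_\star^{(j)}\sim\textbf{N}_n(0,[\vartheta_\star]_{jj}^{-1}I_n)$ independently of $X^{(j)}$ under B\ref{H:lin:mod}, the quasi-score at the truth is $\nabla\ell^{(j)}(\theta_\star^{(j)};Z) = (1/\sigma_j^2)X^{(j)\prime}\xi^{(j)}$. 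A particularly convenient feature of this Gaussian model is that the third derivative vanishes identically, so the quantity $\mathsf{a}_2$ appearing in Theorems \ref{thm:sel} and \ref{thm:KL} equals $0$; this eliminates the cubic term in the Bernstein--von Mises bound.

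First I would construct a single good event $\G$ on which, simultaneously for every $j\in\{1,\ldots,p+1\}$: (i) $\|X^{(j)\prime}\xi^{(j)}\|_\infty \leq C_0\sqrt{n\log(p)/[\vartheta_\star]_{jj}}$, obtained by Gaussian concentration of $\xi^{(j)}$ conditionally on $X^{(j)}$ combined with sub-exponential control of the diagonal of $X^{(j)\prime}X^{(j)}$; (ii) the $(\bar s+s_\star)$-sparse singular values of $X^{(j)}/\sqrt{n}$ lie between two absolute positive constants, which follows from standard Wishart restricted-eigenvalue estimates, (\ref{lm:eig:assump}), and $n\geq C_0\bar s\log p$; and (iii) the likelihood-ratio statistics $\ell^{[\delta]}(\hat\theta_\delta;Z)-\ell^{[\delta_\star^{(j)}]}(\hat\theta_{\delta_\star^{(j)}};Z)$ are at most $ku\log(p)/2$ uniformly over $\delta\supseteq\delta_\star^{(j)}$ with $\|\delta\|_0=s_\star^{(j)}+k$, via a $\chi^2$-tail bound combined with a union bound over at most $(p+1)\binom{p}{k}$ nested sub-models. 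Careful book-keeping shows $\PP_\star(Z\notin\G) = O(p^{-1})$.

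On $\G$, H\ref{H1} holds with $\bar\rho\asymp(1/\sigma_j^2)\sqrt{n\log(p)/[\vartheta_\star]_{jj}}$ and $\bar\kappa\asymp n/\sigma_j^2$, and $\L_{\theta_\star^{(j)}}$ satisfies a restricted strong concavity with $\r(x)=(c_0n/\sigma_j^2)x^2$ on $(\bar s+s_\star)$-sparse directions, yielding the contraction rate
\[\epsilon^{(j)} \asymp \frac{\bar\rho (s_\star^{(j)}+\bar s)^{1/2}}{n/\sigma_j^2} \asymp \sqrt{\frac{(s_\star^{(j)}+\bar s)\log p}{[\vartheta_\star]_{jj}\,n}}\]
of the corollary. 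The prior-slab inequality $32\rho_{1,j}\|\theta_\star^{(j)}\|_\infty\leq\bar\rho$ and the smallness condition (\ref{eq:cond:thm:0}) are both satisfied because $\rho_{1,j}=\sqrt{\log p/n}$, $\|\theta_\star^{(j)}\|_\infty=O(1)$, and $\bar s^{(j)}$ is matched to the sparsity bound of Theorem \ref{thm:0}. The strong-signal assumption (\ref{tech:cond:lm}) then lets me invoke Theorem \ref{thm:sel} with $j=0$ in its conclusion, which, combined with the sparsity estimate from Theorem \ref{thm:0}, gives $\textbf{1}_\G(Z)(1-\Pi^{(j)}(\delta_\star^{(j)}\vert Z)) = O(p^{-u/2})$. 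With model selection in hand, Theorem \ref{thm:KL} applies, and because $\mathsf{a}_2=0$ its leading term reduces to $C_0\rho_{1,j}\bar s^{1/2}\epsilon^{(j)} \asymp \bar s^{1/2}(\bar s+s_\star)^{1/2}\log(p)/(n\sqrt{[\vartheta_\star]_{jj}})$, which under (\ref{lm:eig:assump}) is bounded by $C_0(\bar s+s_\star)\log(p)/([\vartheta_\star]_{jj}n)$ as claimed; the remaining terms in (\ref{eq:main:bvm}) are either of lower order or absorbed into the $p^{-1\wedge(u/2-1)}$ residual coming from $\PP_\star(Z\notin\G)$ and $(1-\Pi(\delta_\star\vert Z))$. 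Taking the union bound over $j$ delivers (\ref{eq:thm:KL:lm}).

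The second assertion follows directly from Theorem \ref{thm:VA}: the integral $\int\log(\rmd\Pi^{(j,\infty)}_\star/\rmd\Pi^{(j)})\rmd\tilde\Pi^{(j,\infty)}_\star$ on the right of (\ref{thm:VA:eq}) is comparable to $\KL{\Pi^{(j,\infty)}_\star}{\Pi^{(j)}}$, since $\tilde\Pi^{(j,\infty)}_\star$ and $\Pi^{(j,\infty)}_\star$ share the same support and their densities differ only through a bounded sparsity-pattern projection of the information matrix; the first bound thus controls this piece, while $\zeta^{(j)}$ remains as an additive term. The main obstacle in executing this plan is the uniform control over $j$ of the good event $\G$, in particular the Wishart restricted-eigenvalue bound of item (ii) and the $\chi^2$-statistic bound of item (iii), both of which require delicate concentration arguments combined with union bounds over exponentially many sub-models. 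A secondary nuisance is tracking constants so that the $[\vartheta_\star]_{jj}$-dependence emerges correctly in $\epsilon^{(j)}$, and verifying the many numerical side-conditions (e.g.\ $\underline{\kappa} p^u\geq 4\rho_{1,j}$, $(C-1)\epsilon\underline{\kappa}^{1/2}\geq 2(s_\star^{1/2}+1)$) imposed by the hypotheses of Theorems \ref{thm:sel} and \ref{thm:KL}.
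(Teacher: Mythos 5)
Your plan coincides with the paper's own proof: the same good event (restricted eigenvalues of the design via Raskutti--Wainwright/Ravikumar-type bounds, control of the score $X^{(j)\prime}\xi^{(j)}$, and a uniform bound on the nested quasi-likelihood-ratio statistics), followed by the same chain Theorem~\ref{thm:0} $\to$ Theorem~\ref{thm1} $\to$ Theorem~\ref{thm:sel} (with $\mathsf{a}_2=0$) $\to$ Theorem~\ref{thm:KL} $\to$ Theorem~\ref{thm:VA}, finishing with the moment bound (\ref{bound:moment:Q}) applied to $R=\tilde\Pi_\star^{(\infty)}$. The only cosmetic differences are that the paper checks condition (\ref{eq:curvature:cond:thm0}) by integrating out the Gaussian noise exactly through its conditional moment generating function (rather than through an event on $\|X^{(j)\prime}\xi^{(j)}\|_\infty$, which only enters the verification of H\ref{H1}), and it bounds the projected quadratic forms by Hanson--Wright instead of a $\chi^2$ tail bound -- both equivalent in substance to what you propose.
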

\begin{proof}
See Section \ref{proof:coro:lm}. 
\end{proof}

\begin{remark}
\begin{enumerate}
%\item  The corollary provide some guidelines for choosing $\sigma^2$, and $\rho_1$. Ideally one should choose $\sigma^2 = \vartheta_{\star,11}^{-1}$,  and Corollary \ref{coro:lm} suggests that it is better to over-estimate $ \vartheta_{\star,11}^{-1}$ than to under-estimate it.  %However  large values of $\sigma^2\vartheta_{\star,11}$ imply large values of $C$ and hence  large values of $C\epsilon$ in (\ref{tech:cond:lm}). 
\item We have focused in the Corollary on the Bernstein-von Mises approximation and the behavior of the VA approximation. Other results, and generally more precise results are given in the proof. In particular we show that the rate of contraction of $\Pi^{(j)}$ is $\epsilon^{(j)}$, and that $\Pi^{(j)}$ achieves perfect model selection.
\item One cannot easily remove the indicator $\textbf{1}_\G$ from (\ref{eq:thm:KL:lm}). However by Pinsker's inequality we get
\begin{multline*}
2\PE_\star\left[\max_{1\leq j\leq p+1} \|\Pi_\star^{(j,\infty)} - \Pi^{(j)}\|_\tv^2\right] \leq 2\PP_\star[Z\notin\G] \\
+ \frac{C_0\max_j(\bar s^{(j)} + s_\star^{(j)})}{\min_j [\vartheta_\star]_{jj}} \frac{\log(p)}{n} + \frac{C_0}{p^{1\wedge\left(\frac{u}{2}-1\right)}}.\end{multline*}
\item  If the variational approximation $Q^{(j)}$ is constructed from some template $\delta^{(\textsf{i},j)}$, then the remainder $\zeta^{(j)}$ is zero if $\delta^{(\textsf{i},j)}\supseteq\delta_\star^{(j)}$. When this is the case we also have $\tilde\Pi_\star^{(j,\infty)} = \Pi_\star^{(j,\infty)}$. This holds for instance if $\delta^{(\textsf{i},j)}$ is the vector with all components equal to $1$ (full-VA). However the full-VA is expensive to compute. In fact, as we illustrate below the full-VA is more expensive to compute than direct MCMC sampling from $\Pi^{(j)}$.  However if $\delta^{(\textsf{i},j)}$ is sparse, for instance if $\delta^{(\textsf{i},j)}$ is the support of the lasso solution -- or some equally well-behaved frequentist estimate -- then the scaling of the  computational cost of $Q^{(j)}$ can be extremely favorable. Hence Corollary implies that extremely fast variational approximation of $\Pi^{(j)}$ with strong theoretical guarantees can be computed in large scale Gaussian graphical models.

%In that case, write
%\[\Cset(X) \eqdef \max_{i\neq j:\;\delta_{\star,i}=\delta_{\star,j}=1}\; \frac{|\pscal{X_i}{X_j}|}{\|X_i\|_2\|X_j\|_2},\]
%and note that with high probability we have $\Cset(X) \leq \log(p)/\sqrt{n}$ (using for instance Lemma 1 of \cite{ravikumaretal11}). By existing  upper-bounds on the determinant of diagonally dominant matrices (see e.g. \cite{yong}~Theorem 2.2), we have
%\[\log\left(\frac{\det(\mathcal{I})}{\prod_j\mathcal{I}_{jj}}\right) \leq\sum_{i=1}^{s_\star} \log\left(1+ (s_\star -i)\Cset(X)\right) \leq \frac{s_\star^2}{2}\Cset(X),\]
%and by Lemma 2.2 of \cite{yong}) applied with $A=\mathcal{I}$ and $B=\textsf{diag}(\mathcal{I})$,  we have
%\[\left(\textsf{Tr}\left(\mathcal{I}^{-1}\mathsf{diag}(\mathcal{I})\right) -s_\star\right)  \leq \frac{s_\star^2 \Cset(X)}{1-s_\star\Cset(X)} \leq  2s_\star\eta,\]
%on $s_\star\Cset(X)\leq \eta$, for any $\eta\in (0,1/2)$. Hence, for any $\eta\in (0,1/2)$, and noting that $\zeta=O(s_\star)$ as $p\to\infty$, we have
%\[\PE_\star\left[\textbf{1}_{\G}(Z)\zeta \right] \leq  \frac{3s_\star\eta}{2} + C_0s_\star\PE_\star\left[\textbf{1}_{\G}(Z)\textbf{1}_{\{s_\star\Cset(X)>\eta\}}\right]\leq \frac{C_0s_\star^2\log(p)}{\sqrt{n}}, \]
%by taking $\eta = \frac{s_\star\log(p)}{\sqrt{n}}$, for $n$ large enough so that $\eta\in (0,1/2)$. Hence when the  relevant variables of the true model are independent, the mean field variational approximation of the posterior distribution is consistent in high-dimension.
\end{enumerate}
\vspace{-0.6cm}
\begin{flushright}
$\square$
\end{flushright}
\end{remark}
\medskip

\subsubsection{Numerical illustration}
\begin{wrapfigure}{r}{0.6\textwidth}
    \centering
    {\includegraphics[scale=0.2]{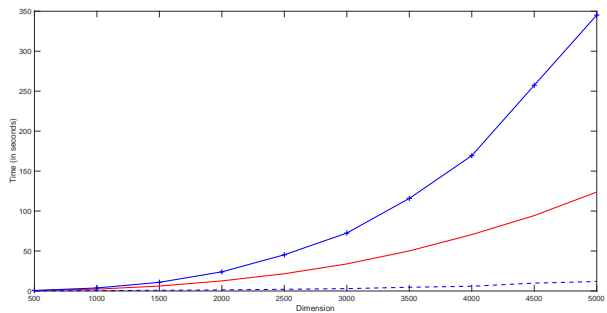}}
    \vspace{-0.8 cm}
 \caption{{\small Costs of: $p$ iterations of Metropolized Gibbs sampler (red solid line);  50 iterations of full-VA (blue+ line); and 50 iterations of midsize-VA with $\|\delta^{(\textsf{i})}\|_0=100$ (blue-dashed line), as   functions of the dimension $p$.}}
\label{fig:lm2}
 \end{wrapfigure}
We perform a simulation study to assess the behavior of the posterior distribution and its variational approximations as described in Corollary \ref{coro:lm}. For simplicity we focus on only one of the regression  problems. We set  $p=1000$, $n\in \{100,500\}$, and we generate $Z=[Y,X]\in\rset^{n\times (p+1)}$ as follows. We first generate the matrix $X$ by simulating the rows of $X$ independently from a Gaussian distribution with correlation $\psi^{|j-i|}$ between components $i$ and $j$, where  $\psi\in\{0,0.8\}$. When $\psi=0$, the resulting matrix $X$ has a low coherence, but the coherence increases when $\psi =0.8$. Using $X$, we general $Y=X\theta_\star +\epsilon/\vartheta_{\star,11}$, with $\vartheta_{\star,11}=1$ that we assume known. We build $\theta_\star$ with $s_\star=10$ non-zeros  components that we fill with draws from the uniform distribution $\pm\mathbf{U}(a,a+1)$, where $a = 4\sqrt{s_\star\log(p)/n}$.

We build $\Pi$ with $\sigma^2=1$, $u=2$, $\rho_1=\sqrt{\log(p)/n}$, and $\rho_0^{-1} = 1/(4n)$. We sample from $\Pi$ using Algorithm \ref{algo:gibbs}. We consider two variational approximation. The full-VA, and a mid-size VA with template $\delta^{(\textsf{i})}$ that contains the support of $\theta_\star$, and such that $\|\delta^{(\textsf{i})}\|_0=100$. We approximate the variational approximations by coordinate ascent variational inference (see e.g. \cite{blei:etal:17}). The details of these algorithms are given in Appendix \ref{append:algo}. We initialize all three algorithms from the lasso solution.  In Figure  \ref{fig:lm2} we plot the computational cost of the three algorithms as $p$ increases. It  shows that the full-VA is actually more expensive than the MCMC sampler. This is  due to the need to form the Cholesky decomposition of a large $p\times p$ matrix at each iteration of the full-VA. In contrast, and as explained in Section \ref{subsec:algo1} the per-iteration cost of Algorithm \ref{algo:gibbs} is of order $O(s_\star p)$. On the other hand, for $p=5,000$ the midsize VA is more than 10 times faster than the MCMC sampler.

Figure \ref{fig:lm} shows the (estimated) posterior distributions for the parameters $\theta_1,\theta_2$ and $\theta_3$ from one MCMC run of $5,000$ iterations and single CAVI-runs of 50 iterations. Here we are comparing the skinny-VA, and the midsize-VA with $\|\delta^{(\textsf{i})}\|_0=100$, for a template $\delta^{(\textsf{i})}$ that contains the support of $\theta_\star$. Since we are working in a high signal-to-noise  ratio setting the results are fairly consistent across replications.  The true signal $\theta_\star$ is such that $\theta_{\star,1}\neq 0$ and $\theta_{\star,2}\neq 0$ while $\theta_{\star,3}=0$. %Figure \ref{fig:lm} confirms that both variational approximations converge quickly when the coherence of $X$ is low, but require more data samples when the coherence is high. 
Figure \ref{fig:lm} shows that as $n$ increases both VA approximations approximate well the quasi-posterior distribution in the low coherence regime. However in  presence of correlation,  the skinny-VA systematically underestimates the marginal posterior variances when there is correlation between the relevant variables. However,  as suggested by Corollary \ref{coro:lm}, the midsize-VA approximates the whole distribution well.

\begin{figure}[]
\centering
\resizebox{\textwidth}{0.4\textheight}{\begin{tabular}{c}
Linear regression with low coherent design matrix. $p=1000$, $n=100$.\medskip\\
\includegraphics[width=0.8\textwidth,height=0.3\textheight,  trim = 1cm .1cm 0cm 1.cm, clip = true ]{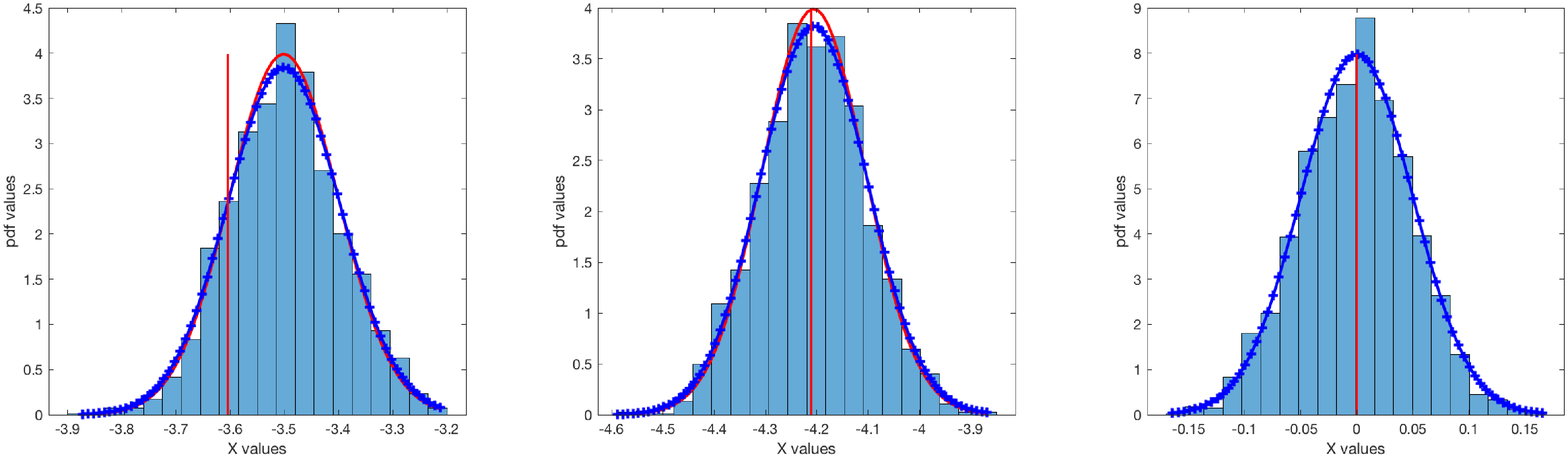}\\
Linear regression with low coherent design matrix. $p=1000$, $n=500$.\medskip\\
\includegraphics[width=0.8\textwidth,height=0.3\textheight ,  trim = 1cm .1cm 0cm 1.cm, clip = true]{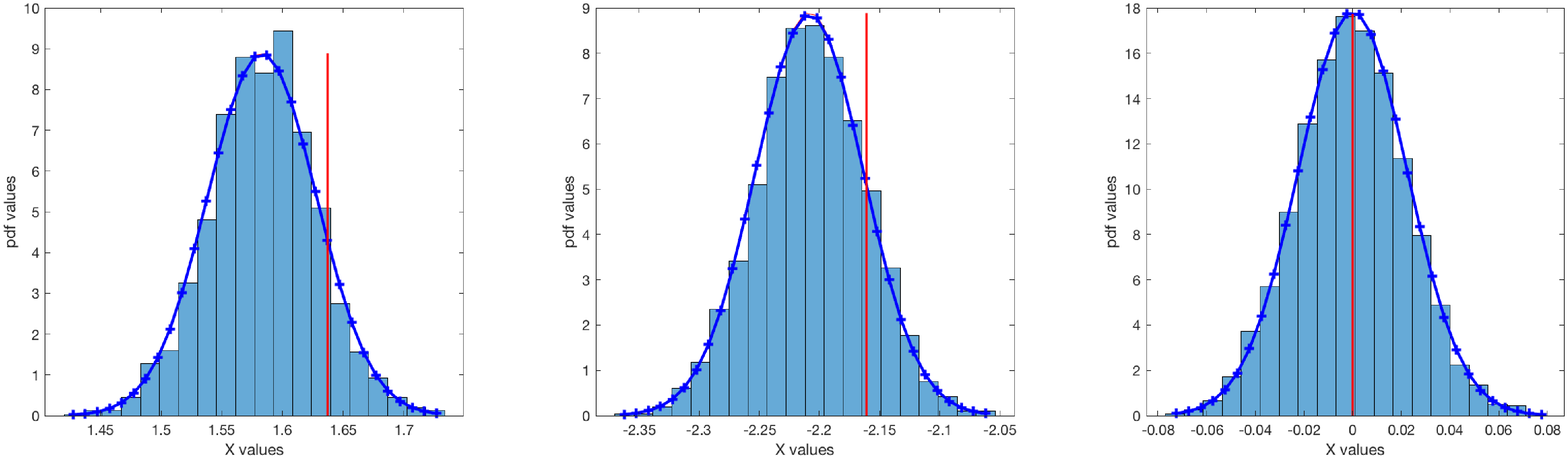}\\
Linear regression with high design matrix . $p=1000$, $n=100$.\medskip\\
\includegraphics[width=0.8\textwidth,height=0.3\textheight ,  trim = 1cm .1cm 0cm 1.cm, clip = true]{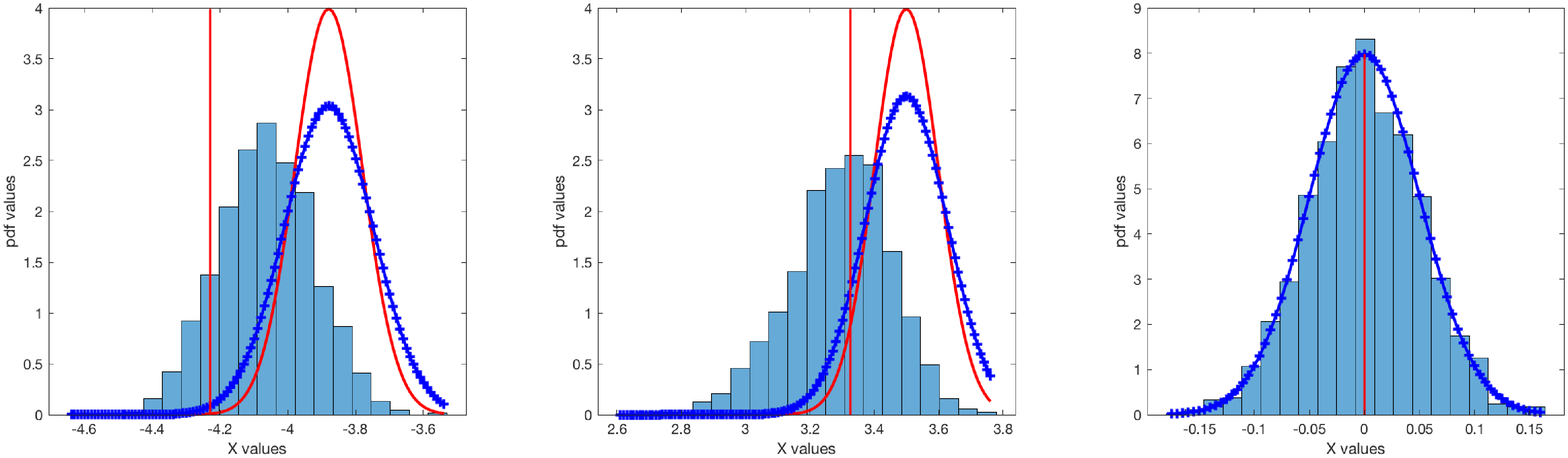}\\
Linear regression with high design matrix. $p=1000$, $n=500$.\medskip\\
\includegraphics[width=0.8\textwidth,height=0.3\textheight ,  trim = 1cm .1cm 0cm 1.cm, clip = true]{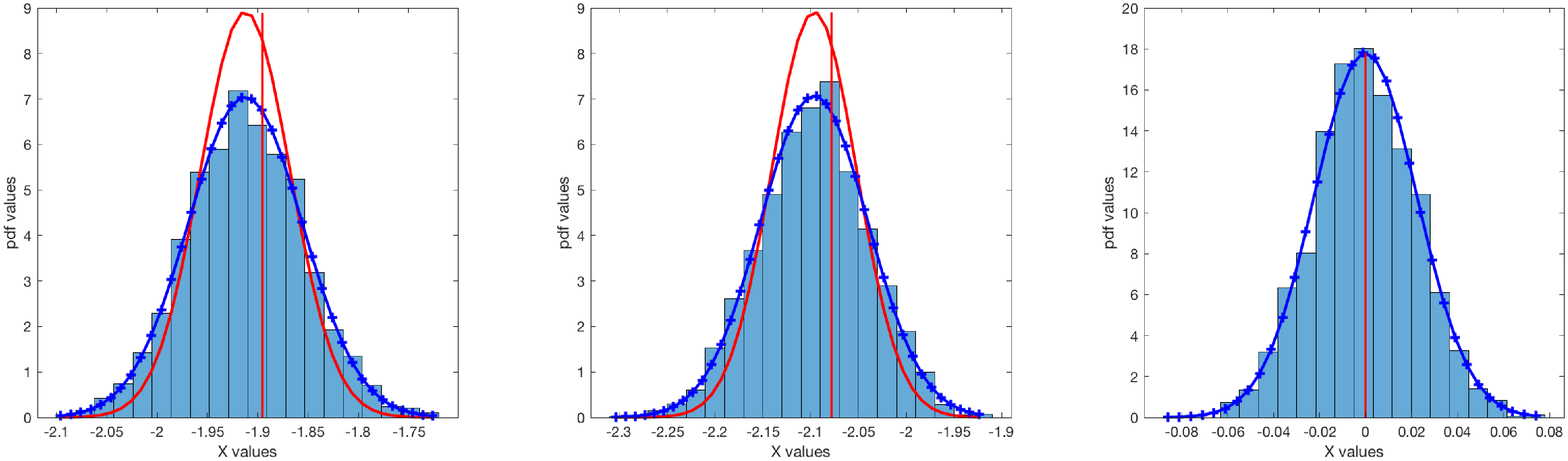}\\
\end{tabular}}
\caption{{\small{Posterior inference for $\beta_1$ (first column), $\beta_2$ (second column) and $\beta_3$ in the linear regression example based on one MCMC run (histogram), one skinny-VA run (continuous red line), and one midesize-VA run ($+$ blue line). Vertical lines locate the true values of the parameters.}} }\label{fig:lm}
\end{figure}

 \subsection{Sparse principal component estimation}\label{sec:spca}
We give another  illustration of the quasi-Bayesian framework with a non-standard example from sparse PCA.  Principal component analysis is a widely used technique for data exploration and data reduction (\cite{jolliffe:1986}). In  order to deal with high-dimensional datasets,  several works have introduced recently various versions of PCA that estimate  sparse principal components  (\cite{jolliffe:etal:03,zou:etal:06,shen:huang:08,lei:vu:15}). Extension of these ideas to a full Bayesian setting has been considered in the literature  but is computationally challenging  (\cite{pati:etal:14,gao:zhou:15,xie:etal:18}). Using the  quasi-Bayesian framework we explore here a fast regression-based approach to sparse PCA that we show works well when the sample size $n$ is close to $p$ and/or the spectral gap is sufficiently large.  We consider the following data generating process.

\begin{assumptionC}\label{H:spca}
The matrix $X\in\rset^{n\times p}$ is such that the rows of $X$ are i.i.d. from the Gaussian distribution $\textbf{N}_p(0,\Sigma)$ on $\rset^p$, with a covariance matrix $\Sigma$ of the form
\[\Sigma = \vartheta\theta_\star\theta_\star' + I_p,\]
for some sparse unit-vector $\theta_\star\in\rset^p$, and some absolute constant $\vartheta>0$. We set $s_\star \eqdef\|\theta_\star\|_0$. \end{assumptionC}
\medskip

Let $X = U\Lambda V'$ be the singular value decomposition (SVD) of $X$. Let $V_1$ be the first column of $V$. It was noted by \cite{zou:etal:06} that  setting $y = \Lambda_{11} U_1$, it holds for all $\lambda>0$ that
\[V_1 = \frac{\hat b}{\|\hat b\|_2},\;\;\mbox{ where }\;\; \hat b\eqdef \argmin_{\beta\in\rset^p} \| y -X\beta\|_2^2 + \lambda\|\beta\|_2^2.\]
This result suggests that one can recover the first principal component $V_1$ by sparse regression of  $y=\Lambda_{11}U_1$ on $X$. To implement this idea in a Bayesian framework we are naturally led  to the quasi-likelihood function 
\[\ell(\theta; X) = -\frac{1}{2\sigma^2}\|y - X\theta\|_2^2,\;\;\theta\in\rset^p,\]
for some constant $\sigma^2>0$. The resulting quasi-posterior distribution on $\Delta\times \rset^p$ is  the same as in (\ref{post:linmod}):
\[\Pi(\delta,\rmd\theta\vert Z)
\propto  e^{-\frac{1}{2\sigma^2}\|y-X\theta_\delta\|_2^2} \omega(\delta) \left(\frac{\rho_{1}}{2\pi}\right)^{\frac{\|\delta\|_0}{2}} \left(\frac{\rho_{0}}{2\pi}\right)^{\frac{p-\|\delta\|_0}{2}} e^{-\frac{\rho_{1}}{2}\|\theta_\delta\|_2^2}e^{-\frac{\rho_{0}}{2}\|\theta-\theta_\delta\|_2^2} \rmd \theta.\]
We analyze this quasi-posterior distribution.  One challenge here is that we do not possess a good understanding of the distribution of the quasi-score function $X'(\Lambda_{11} U_1 - X\theta_\star)/\sigma^2$ due to the intricate nature of the SVD decomposition. Hence Theorem \ref{thm:0} cannot be applied, and thus we do not know whether the quasi-posterior distribution is automatically sparse under the prior H\ref{H2}.  We work around this issue by hard-coding  sparsity directly in the prior as follows.

\begin{assumptionC}\label{H2:bis}
We assume that
\[\omega(\delta)\propto \q^{\|\delta\|_0}(1-\q)^{p-\|\delta\|_0}\textbf{1}_{\Delta_{\bar s}}(\delta),\;\;\;\delta\in\Delta,\]
for some integer $\bar s\geq s_\star$, where $\mathsf{q}\in (0,1)$ is such that $\frac{\mathsf{q}}{1-\mathsf{q}} = \frac{1}{p^{u+1}}$, for some  absolute constant $u>0$. Furthermore we will assume that $p\geq 9$, $p^{u/2}\geq 2e^{2\rho}$.
\end{assumptionC}
\medskip

Since $s_\star$ is not known, how to find $\bar s$ in practice that satisfies $\bar s\geq s_\star$ is not obvious, and would require some judgment from the researcher. However in terms of computations,  using C\ref{H2:bis} instead of H\ref{H2} implies only a minor change to the MCMC sampler in Algorithm \ref{algo:gibbs}\footnote{in STEP 2, if $\delta^{(k)}_j=0$ and $\iota=1$, we propose to do the change only if $\|\delta^{(k)}\|_0\leq \bar s$.}. For $a\in\rset$, $\textsf{sign}(a)=1$ if $a\geq 0$, and $-1$ otherwise.

\begin{corollary}\label{coro:spca}
Assume C\ref{H:spca}, C\ref{H2:bis},  and choose $\sigma^2=\vartheta$,  $\rho = \sqrt{\log(p)/n}$.  Suppose that $\|\theta_\star\|_\infty =O(1)$, as $p\to\infty$. There exist absolute constants $C_0,C$ such that for $n\geq C_0(\frac{p}{\vartheta} + \bar s\log(p))$, we have
\[\lim_{p\to\infty} \PE_\star\left[\textbf{1}_{\{\textsf{sign}(\pscal{V_1}{\theta_\star})=1\}} \Pi\left(\cB_{\theta_\star}\vert X\right) + \textbf{1}_{\{\textsf{sign}(\pscal{V_1}{\theta_\star})=-1\}} \Pi\left(\cB_{-\theta_\star} \vert X\right)\right] =1,\]
where  for $\theta_0\in\{\theta_\star,-\theta_\star\}$,
\[\cB_{\theta_0}\eqdef\bigcup_{\delta\in\Delta_{\bar s}}\{\delta\}\times \left\{\theta\in\rset^p:\; \|\theta_\delta -\theta_0\|_2\leq C\vartheta  \sqrt{\frac{\left(\frac{p}{\vartheta}+\log(p)\right)(\bar s+s_\star)}{n}},\; \|\theta-\theta_\delta\|_2\leq 3\sqrt{\gamma p}\right\}.\]
\end{corollary}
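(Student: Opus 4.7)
The plan is to apply Theorem \ref{thm1} on each of the two sign events $\{\textsf{sign}(\pscal{V_1}{\theta_\star}) = \pm 1\}$, taking the role of the true parameter in H\ref{H1} to be $\theta_0 \in \{\theta_\star,-\theta_\star\}$ chosen so that $\pscal{V_1}{\theta_0} \geq 0$. Under this choice the set $\cB$ in (\ref{def:set:B}) coincides with $\cB_{\theta_0}$, and since assumption C\ref{H2:bis} hard-codes $\|\delta\|_0\leq\bar s$ into the prior, the first term on the right-hand side of (\ref{thm1:eq:main:bound}) vanishes. It therefore suffices to find a measurable event $\G$ with $\PP_\star(Z\notin\G)\to 0$ on which both H\ref{H1} and the restricted concavity defining $\e_1(\bar s)$ hold, with $\bar\rho$ and rate function $\r$ producing the claimed $\epsilon$.

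For H\ref{H1}, the key identity is $y = \Lambda_{11}U_1 = XV_1$, which gives
\[\nabla\ell(\theta_0; X) = \frac{1}{\vartheta}X'X(V_1-\theta_0) = \frac{1}{\vartheta}\Bigl[n(1+\vartheta)(V_1-\theta_0) + n\bigl(\lambda_1-(1+\vartheta)\bigr) V_1 - (X'X-n\Sigma)\theta_0\Bigr],\]
after using $X'XV_1 = n\lambda_1 V_1$ with $\lambda_1 = \lambda_{\max}(X'X/n)$, and $\Sigma\theta_0 = (1+\vartheta)\theta_0$. I would bound the first two pieces by their $\ell_2$ norms: Gaussian covariance concentration gives $\|X'X/n-\Sigma\|_{\textsf{op}}\lesssim\sqrt{p/n}$ with high probability, which yields both $|\lambda_1-(1+\vartheta)|\lesssim\sqrt{p/n}$ (Weyl) and, via Davis--Kahan applied to the eigengap $\vartheta$, $\|V_1-\theta_0\|_2\lesssim\sqrt{p/n}/\vartheta$. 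The third piece I would control by a Bernstein bound on the centered sub-exponential sums $[(X'X-n\Sigma)\theta_0]_j$, giving $\|(X'X-n\Sigma)\theta_0\|_\infty\lesssim(1+\vartheta)\sqrt{n\log p}$ after a union bound in $j$. Together these yield $\|\nabla\ell(\theta_0;X)\|_\infty\leq \bar\rho/2$ on $\G$ with $\bar\rho\asymp\sqrt{n(p/\vartheta+\log p)}$, and the condition $32\rho_1\|\theta_0\|_\infty\leq\bar\rho$ is immediate from $\rho_1=\sqrt{\log(p)/n}$ and $\|\theta_\star\|_\infty=O(1)$. The restricted smallest-eigenvalue lower bound $\underline{v}$ needed for $\e_1(\bar s)$ (and the corresponding restricted largest-eigenvalue bound $\bar\kappa$) follows for $n\geq C_0\bar s\log p$ from standard sparse sample-covariance concentration plus a union bound over the $\binom{p}{\bar s+s_\star}$ supports, using $\Sigma\succeq I_p$. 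Taking $\r(x) = n\underline{v} x^2/\vartheta$ and the above $\bar\rho$ in (\ref{eq:rate}) gives
\[\epsilon \asymp \frac{\vartheta(\bar s+s_\star)^{1/2}\bar\rho}{n} \asymp \vartheta\sqrt{\frac{(p/\vartheta+\log p)(\bar s+s_\star)}{n}},\]
matching the statement; the secondary radius $\sqrt{(1+C_1)p/\rho_0}$ in $\cB^{(\delta)}$ reduces to $3\sqrt{\gamma p}$ once $\rho_0=1/\gamma$. The technical condition (\ref{tech:cond:thm1}) then reduces to the assumed $n\geq C_0(p/\vartheta+\bar s\log p)$ after elementary algebra, and Theorem \ref{thm1} delivers the corollary once one sums over the two sign events.

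The main obstacle I anticipate is precisely the infinity-norm control of $\nabla\ell(\theta_0;X)$: because Davis--Kahan only produces an $\ell_2$-bound on $V_1-\theta_0$, the first piece can only be bounded by $(1+\vartheta)\sqrt{np}/\vartheta$, and this is what produces the $p/\vartheta$ term inside the square root of the final rate. Sharper entrywise perturbation results for eigenvectors would refine this, but they are considerably more delicate and are not needed here; the absolute-constant assumption on $\vartheta$ absorbs the loss. A secondary technicality is that Theorem \ref{thm:0} is unavailable (since the distribution of the quasi-score is not well understood), but the sparsity term in (\ref{thm1:eq:main:bound}) vanishes trivially under C\ref{H2:bis}, so this is not an obstruction.
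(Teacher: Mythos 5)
Your proposal is correct and follows the same overall strategy as the paper's proof: verify the score bound and restricted strong concavity for both candidate centers $\pm\theta_\star$, take $\bar\rho\asymp\sqrt{n(p/\vartheta+\log p)}$ and a quadratic rate function so that (\ref{eq:rate}) yields the stated $\epsilon$, use the truncation in C\ref{H2:bis} to make the $\Pi(\|\delta\|_0>\bar s\vert X)$ term vanish, and resolve the sign ambiguity through Davis--Kahan. Two execution details differ from the paper. First, you invoke Theorem \ref{thm1} twice as a black box, once per sign event intersected with the corresponding good set $\e_{1,\pm\theta_\star}(\bar s)$; the paper instead re-runs the proof of Theorem \ref{thm1} using the version of Lemma \ref{test} built for the two-point set $\Theta_\star=\{\theta_\star,-\theta_\star\}$ (this is precisely why that lemma is stated for a finite $\Theta_\star$). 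Since Theorem \ref{thm1} holds for an arbitrary measurable $\e\subseteq\e_1(\bar s)$ and $-\theta_\star$ satisfies H\ref{H1} with the same support and sup-norm, your shortcut is legitimate and arguably cleaner. Second, for $\|\nabla\ell(\pm\theta_\star;X)\|_\infty$ you use the eigen-identity $X'XV_1=n\lambda_1 V_1$ together with Weyl, Davis--Kahan, and an entrywise Bernstein bound on $(X'X-n\Sigma)\theta_\star$, whereas the paper writes $\nabla\ell(\theta_0;X)=\sigma^{-2}(X'X-n\Sigma)(V_1-\theta_0)+n\sigma^{-2}\Sigma(V_1-\theta_0)$ and controls $\|X'X/n-\Sigma\|_2$ via the intrinsic-dimension deviation bound, using $\textsf{Tr}(\Sigma)/\|\Sigma\|_2\le 1+p/\vartheta$. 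The paper's route is what produces the explicit $p/\vartheta$ (rather than $p$) inside the square root with absolute constants; your route gives an order-$\sqrt{np}$ bound up to $\vartheta$-dependent factors, which recovers the stated rate only because C\ref{H:spca} takes $\vartheta$ to be an absolute constant --- a caveat you correctly flag yourself. Either way the corollary follows.
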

\begin{proof}
See Section \ref{proof:coro:spca}.
\end{proof}

It is well-known that the principal component is identified only up to a sign, which is reflected in Corollary \ref{coro:spca}. The assumption $\sigma^2=\vartheta$ is made for simplicity, since $\vartheta$ is typically unknown. To a certain extent the procedure is robust to a misspecification of $\sigma^2$. 

The contraction rate suggests that the method would perform poorly if the sample size and the spectral gap are  both small, which is confirmed in the simulations. One  important limitation of Corollary \ref{coro:spca} is that the convergence rate does not have the correct dependence on the spectral gap. This is most certainly an artifact of our method of proof. %See \cite{lei:vu:13} for a similar issue.

\subsubsection{Numerical illustration}
We generate a random matrix $X\in\rset^{n\times p}$  according C\ref{H:spca} with $p=1000$, and $n\in \{100,1000\}$, where $\beta_\star = (0.5,0.5,0,0.5,0.5,0,\ldots,0)'$. We consider two levels of the spectral gap $\vartheta\in\{5,20\}$. As above we set up the prior distribution with $u=2$, $\rho_1 =\sqrt{log(p)/n}$, and $\rho_0^{-1} =1/(4n)$. We use the same MCMC sampler as in the Gaussian graphical model of Section \ref{sec:lin:reg}, that we initialize from the lasso solution, and run the $2000$ iterations. We normalize the MCMC output to have unit-norm (at each iteration). We repeat all computations $100$ times and use the replications to approximate the distribution of the posterior means and posterior variances of the first three components of $\theta$ ($\theta_1,\theta_2$ and $\theta_3$). Using the $100$ replications we  also approximate the distribution  of the error 
\[\int \left\|\frac{\theta\theta'}{\|\theta\|_2^2} -\theta_\star\theta_\star'\right\|_2\Pi(\rmd\theta\vert X),\]
that we call projection approximation error. To assess the quasi-likelihood method advocated here we compare its performance to that of the frequentist estimator of (\cite{zou:etal:06}) as implemented in the \textsf{Matlab} package SpaSM (\cite{sjostrand:etal:18}). We present the results on Figure \ref{fig:spca:1} and \ref{fig:spca:2}. The results supports very well the conclusions of Corollary \ref{coro:spca}.
%, and suggest that  a standard  sparse regression method can be used to recover sparse principal components when the sample size is of the same order as the number of variables.

\medskip

\begin{figure}[]
\centering
\resizebox{\textwidth}{0.4\textheight}{\begin{tabular}{c}
Sparse PCA with $\vartheta = 5$,  $p=1000$, $n=100$.\medskip\\
\includegraphics[width=0.8\textwidth,height=0.3\textheight,  trim = 1cm .1cm 0cm 1.cm, clip = true ]{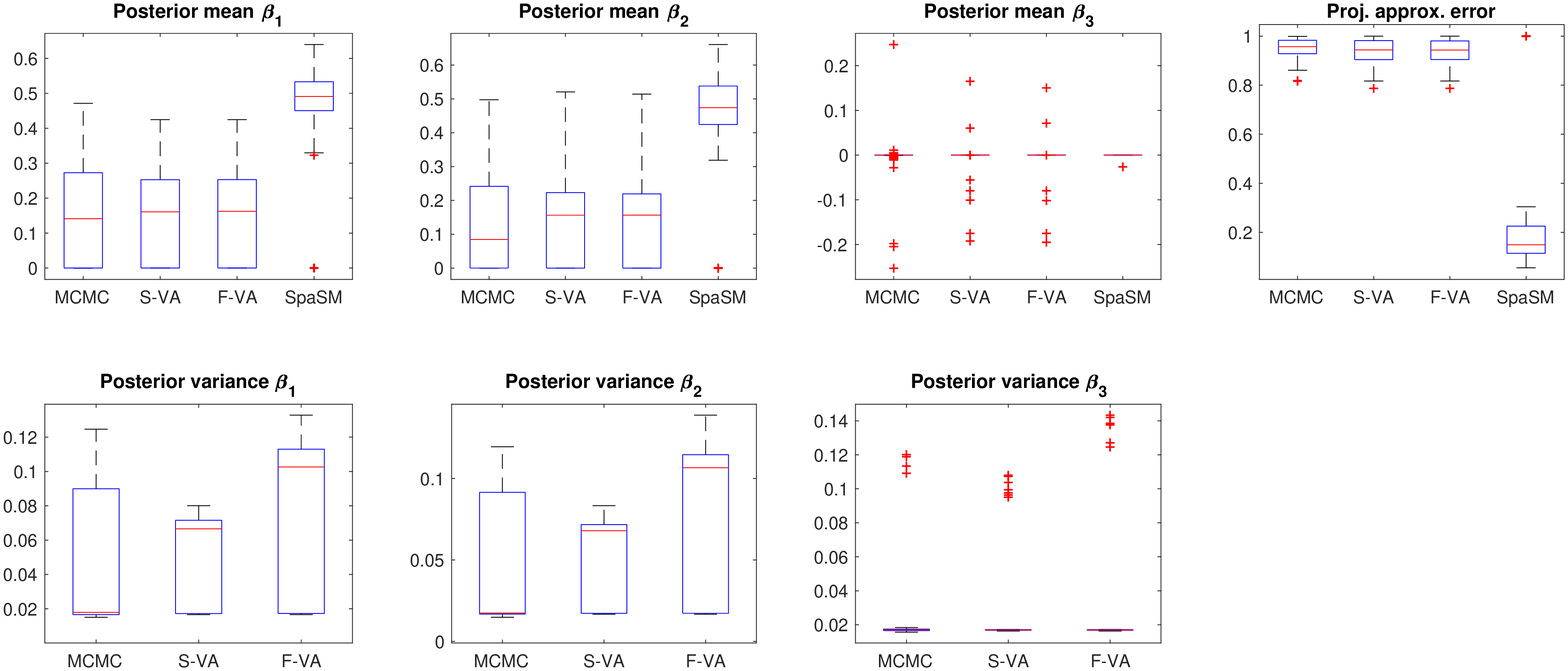}\\
Sparse PCA with $\vartheta = 5$,  $p=1000$, $n=1000$.\medskip\\
\includegraphics[width=0.8\textwidth,height=0.3\textheight ,  trim = 1cm .1cm 0cm 1.cm, clip = true]{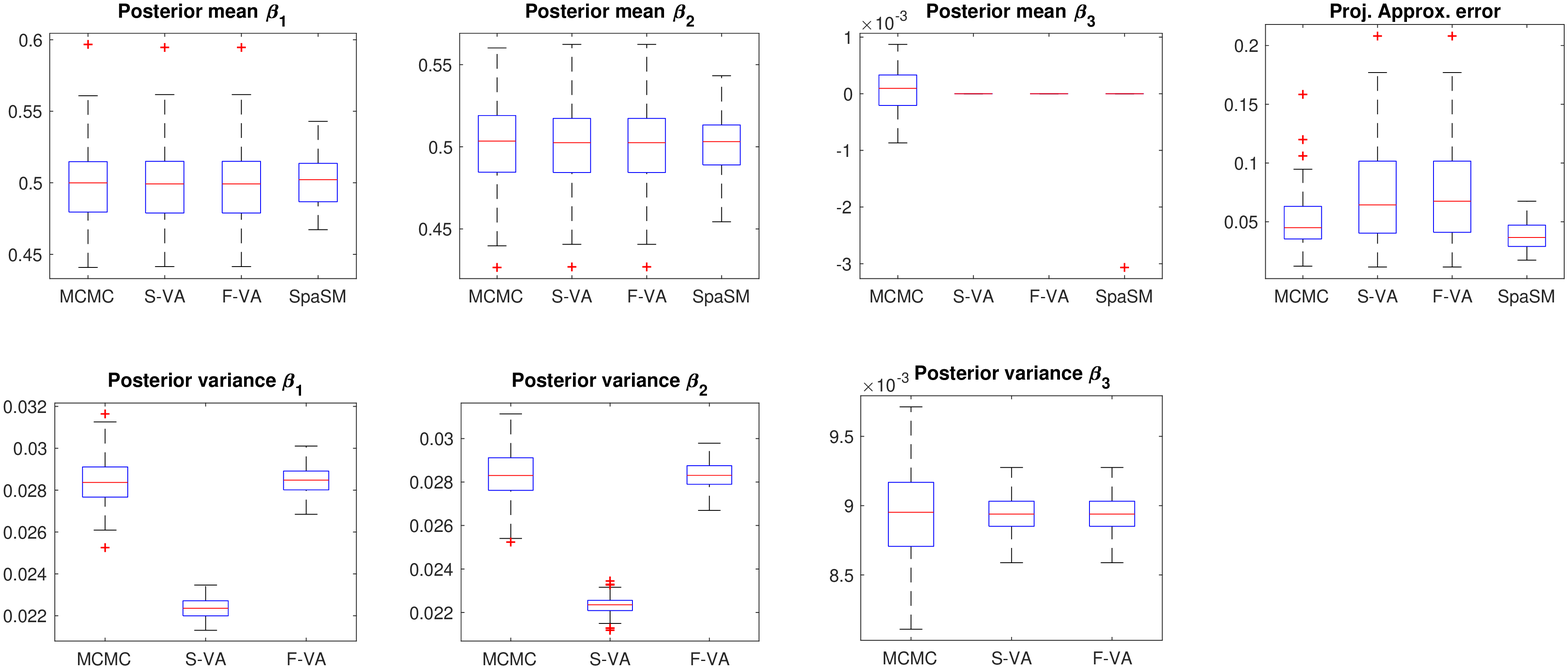}
\end{tabular}}
\caption{{\small{Distributions of posterior means and variances for $\beta_1,\beta_2,\beta_3$, and distribution of the projection approx. error. Estimated from $100$ replications. S-VA is skinny-VA, F-VA is full-VA. We also report similar distributions for the frequentist estimator computed by SpaSM.}} }\label{fig:spca:1}
\end{figure}

\begin{figure}[]
\centering
\resizebox{\textwidth}{0.4\textheight}{\begin{tabular}{c}
Sparse PCA with $\vartheta = 20$,  $p=1000$, $n=100$.\medskip\\
\includegraphics[width=0.8\textwidth,height=0.3\textheight ,  trim = 1cm .1cm 0cm 1.cm, clip = true]{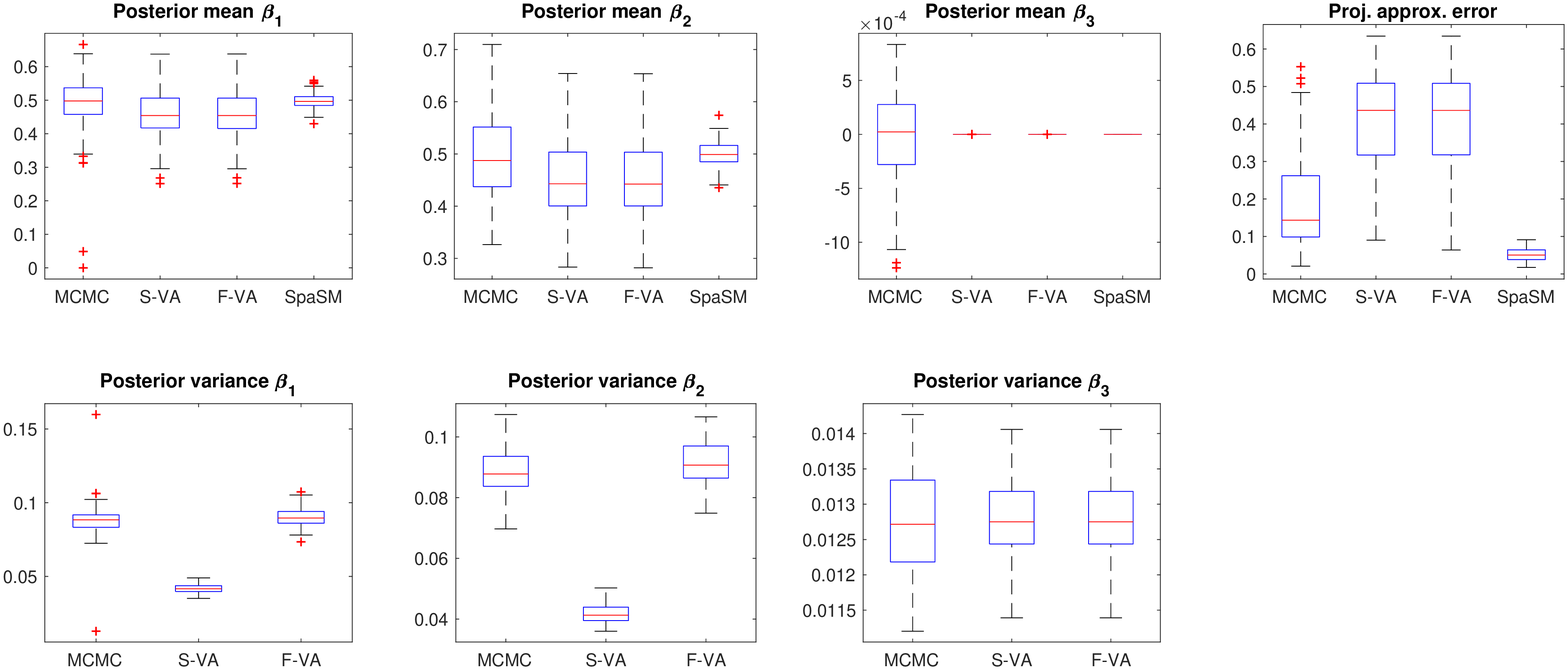}\\
Sparse PCA with $\vartheta = 20$,  $p=1000$, $n=1000$.\medskip\\
\includegraphics[width=0.8\textwidth,height=0.3\textheight ,  trim = 1cm .1cm 0cm 1.cm, clip = true]{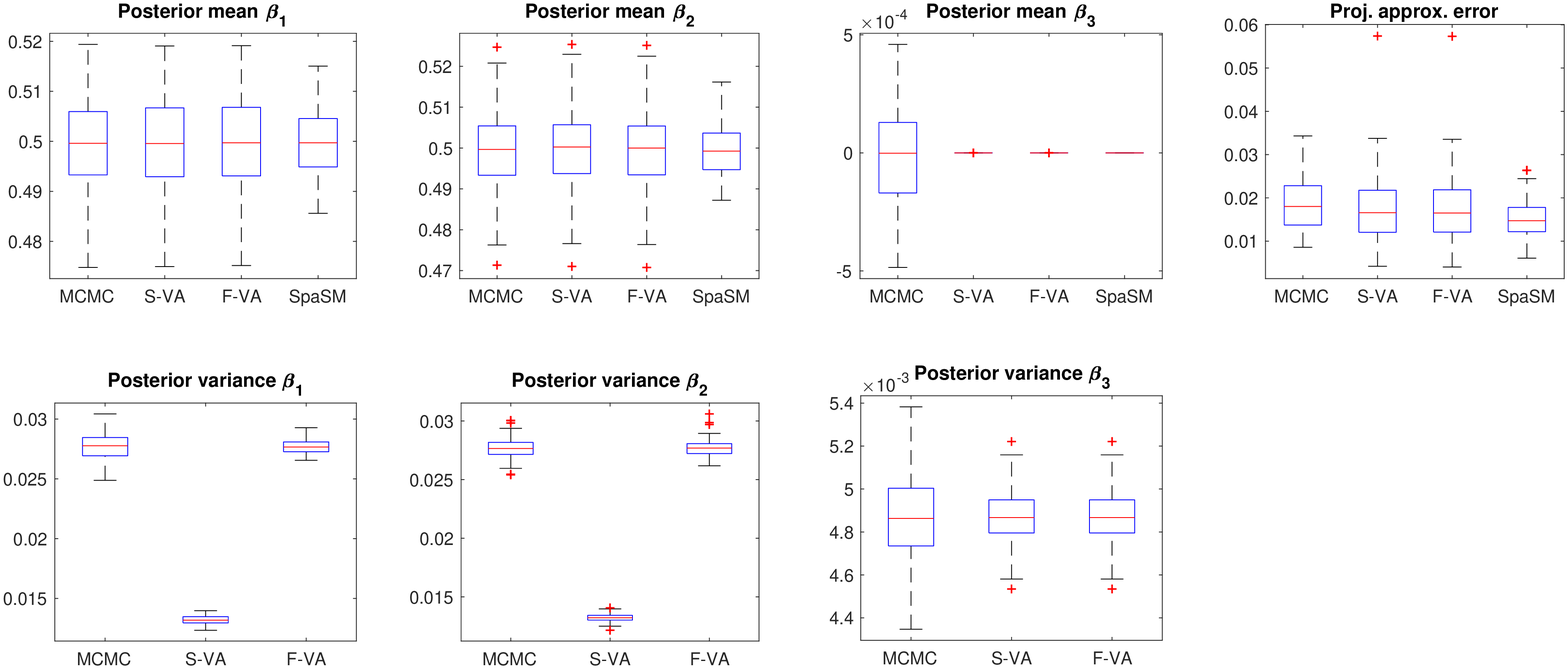}\\
\end{tabular}}
\caption{{\small{Distributions of posterior means and variances for $\beta_1,\beta_2,\beta_3$, and distribution of the projection approx. error. Estimated from $100$ replications. S-VA is skinny-VA, F-VA is full-VA. We also report similar distributions for the frequentist estimator computed by SpaSM.}} }\label{fig:spca:2}
\end{figure}

%
%\section{Concluding remarks}\label{sec:conclusion}
%We have shown in the previous section that when the quasi-log-likelihood function $\ell(\cdot;z)$ has good curvature properties on low-dimensional subspaces, the resulting quasi-posterior distribution $\Pi$ has overall good contraction properties. 
%\begin{enumerate}
%\item Informative priors.
%\item extensions to other types of statistical problems: pca, cca, ??
%\end{enumerate}
%

\section*{Acknowledgements}
The authors are grateful to Galin Jones, Scott Schmidler, Yuekai Sun, James Johndrow,  and Jonathan Taylor for very helpful discussions that have helped improved on an initial draft of the manuscript. 

%\begin{supplement}
%\sname{Supplement A}\label{suppA}
%\stitle{Title of the Supplement A}
%\slink[url]{http://www.e-publications.org/ims/support/dowload/imsart-ims.zip}
%\sdescription{Dum esset rex in
%accubitu suo, nardus mea dedit odorem suavitatis. Quoniam confortavit
%seras portarum tuarum, benedixit filiis tuis in te. Qui posuit fines tuos}
%\end{supplement}

\bibliographystyle{ims}
\bibliography{biblio_graph,biblio_mcmc,biblio_optim}

\begin{thebibliography}{49}
\expandafter\ifx\csname natexlab\endcsname\relax\def\natexlab#1{#1}\fi
\expandafter\ifx\csname url\endcsname\relax
  \def\url#1{\texttt{#1}}\fi
\expandafter\ifx\csname urlprefix\endcsname\relax\def\urlprefix{URL }\fi

\bibitem[{{Alquier} and {Ridgway}(2017)}]{alquier:ridgeway:17}
\textsc{{Alquier}, P.} and \textsc{{Ridgway}, J.} (2017).
\newblock {Concentration of tempered posteriors and of their variational
  approximations}.
\newblock \textit{arXiv e-prints}  arXiv:1706.09293.

\bibitem[{Atchade(2017)}]{atchade:15b}
\textsc{Atchade, Y.~A.} (2017).
\newblock On the contraction properties of some high-dimensional
  quasi-posterior distributions.
\newblock \textit{Ann. Statist.} \textbf{45} 2248--2273.

\bibitem[{Atchad\'e(2019)}]{atchade:15:c}
\textsc{Atchad\'e, Y.~F.} (2019).
\newblock Quasi-bayesian estimation of large gaussian graphical models.
\newblock \textit{Journal of Multivariate Analysis} \textbf{173} 656 -- 671.

\bibitem[{{Banerjee} and {Ghosal}(2013)}]{banerjee:ghosal13}
\textsc{{Banerjee}, S.} and \textsc{{Ghosal}, S.} (2013).
\newblock {Posterior convergence rates for estimating large precision matrices
  using graphical models}.
\newblock \textit{ArXiv e-prints} .

\bibitem[{Bhattacharya et~al.(2016)Bhattacharya, Chakraborty and
  Mallick}]{bhattacharya:etal:16}
\textsc{Bhattacharya, A.}, \textsc{Chakraborty, A.} and \textsc{Mallick, B.}
  (2016).
\newblock Fast sampling with gaussian scale mixture priors in high-dimensional
  regression.
\newblock \textit{Biometrika} \textbf{103} 985 -- 991.

\bibitem[{Blei et~al.(2017)Blei, Kucukelbir and McAuliffe}]{blei:etal:17}
\textsc{Blei, D.~M.}, \textsc{Kucukelbir, A.} and \textsc{McAuliffe, J.~D.}
  (2017).
\newblock Variational inference: A review for statisticians.
\newblock \textit{Journal of the American Statistical Association} \textbf{112}
  859--877.

\bibitem[{Boucheron et~al.(2013)Boucheron, Lugosi and
  Massart}]{boucheron:eta:13}
\textsc{Boucheron, S.}, \textsc{Lugosi, G.} and \textsc{Massart, P.} (2013).
\newblock \textit{Concentration inequalities: a nonasymptotic theory of
  independence}.
\newblock Springer Series in Statistics, Oxford University Press, Oxford.

\bibitem[{Castillo et~al.(2015)Castillo, Schmidt-Hieber and van~der
  Vaart}]{castillo:etal:14}
\textsc{Castillo, I.}, \textsc{Schmidt-Hieber, J.} and \textsc{van~der Vaart,
  A.} (2015).
\newblock Bayesian linear regression with sparse priors.
\newblock \textit{Ann. Statist.} \textbf{43} 1986--2018.

\bibitem[{Castillo and van~der Vaart(2012)}]{castillo:etal:12}
\textsc{Castillo, I.} and \textsc{van~der Vaart, A.} (2012).
\newblock Needles and straw in a haystack: Posterior concentration for possibly
  sparse sequences.
\newblock \textit{Ann. Statist.} \textbf{40} 2069--2101.

\bibitem[{Chernozhukov and Hong(2003)}]{chernozhukov:hong03}
\textsc{Chernozhukov, V.} and \textsc{Hong, H.} (2003).
\newblock An {MCMC} approach to classical estimation.
\newblock \textit{J. Econometrics} \textbf{115} 293--346.

\bibitem[{Chernozhukov et~al.(2007)Chernozhukov, Imbens and
  Newey}]{chernozhukov:etal:07}
\textsc{Chernozhukov, V.}, \textsc{Imbens, G.~W.} and \textsc{Newey, W.~K.}
  (2007).
\newblock Instrumental variable estimation of nonseparable models.
\newblock \textit{Journal of Econometrics} \textbf{139} 4 -- 14.

\bibitem[{Dobra et~al.(2011)Dobra, Lenkoski and Rodriguez}]{dobra:11a}
\textsc{Dobra, A.}, \textsc{Lenkoski, A.} and \textsc{Rodriguez, A.} (2011).
\newblock Bayesian inference for general {G}aussian graphical models with
  application to multivariate lattice data.
\newblock \textit{J. Amer. Statist. Assoc.} \textbf{106} 1418--1433.

\bibitem[{Gao and Zhou(2015)}]{gao:zhou:15}
\textsc{Gao, C.} and \textsc{Zhou, H.~H.} (2015).
\newblock Rate-optimal posterior contraction for sparse pca.
\newblock \textit{Ann. Statist.} \textbf{43} 785--818.

\bibitem[{George and McCulloch(1997)}]{george:mcculloch97}
\textsc{George, E.~I.} and \textsc{McCulloch, R.~E.} (1997).
\newblock Approaches to bayesian variable selection.
\newblock \textit{Statist. Sinica} \textbf{7} 339--373.

\bibitem[{Ghosal et~al.(2000)Ghosal, Ghosh and van~der Vaart}]{ghosal:etal:00}
\textsc{Ghosal, S.}, \textsc{Ghosh, J.~K.} and \textsc{van~der Vaart, A.~W.}
  (2000).
\newblock Convergence rates of posterior distributions.
\newblock \textit{Ann. Statist.} \textbf{28} 500--531.

\bibitem[{Horn and Johnson(2012)}]{horn:johnson}
\textsc{Horn, R.~A.} and \textsc{Johnson, C.~R.} (2012).
\newblock \textit{Matrix Analysis}.
\newblock 2nd ed. Cambridge University Press, New York, NY, USA.

\bibitem[{Ichimura(1993)}]{ichumura:93}
\textsc{Ichimura, H.} (1993).
\newblock Semiparametric least squares (sls) and weighted sls estimation of
  single-index models.
\newblock \textit{Journal of Econometrics} \textbf{58} 71 -- 120.

\bibitem[{Jiang and Tanner(2008)}]{jiang:tanner08}
\textsc{Jiang, W.} and \textsc{Tanner, M.~A.} (2008).
\newblock Gibbs posterior for variable selection in high-dimensional
  classification and data mining.
\newblock \textit{Ann. Statist.} \textbf{36} 2207--2231.

\bibitem[{Johnstone and Silverman(2004)}]{johnstone:04}
\textsc{Johnstone, I.~M.} and \textsc{Silverman, B.~W.} (2004).
\newblock Needles and straw in haystacks: Empirical bayes estimates of possibly
  sparse sequences.
\newblock \textit{Ann. Statist.} \textbf{32} 1594--1649.

\bibitem[{Jolliffe(1986)}]{jolliffe:1986}
\textsc{Jolliffe, I.} (1986).
\newblock \textit{Principal Component Analysis}.
\newblock Springer Verlag.

\bibitem[{Jolliffe et~al.(2003)Jolliffe, Trendafilov and
  Uddin}]{jolliffe:etal:03}
\textsc{Jolliffe, I.~T.}, \textsc{Trendafilov, N.~T.} and \textsc{Uddin, M.}
  (2003).
\newblock A modified principal component technique based on the lasso.
\newblock \textit{Journal of Computational and Graphical Statistics}
  \textbf{12} 531--547.

\bibitem[{Kato(2013)}]{kato:13}
\textsc{Kato, K.} (2013).
\newblock Quasi-{B}ayesian analysis of nonparametric instrumental variables
  models.
\newblock \textit{Ann. Statist.} \textbf{41} 2359--2390.

\bibitem[{Khondker et~al.(2013)Khondker, Zhu, Chu, Lin and
  Ibrahim}]{khondkeretal13}
\textsc{Khondker, Z.~S.}, \textsc{Zhu, H.}, \textsc{Chu, H.}, \textsc{Lin, W.}
  and \textsc{Ibrahim, J.~G.} (2013).
\newblock The {B}ayesian covariance lasso.
\newblock \textit{Stat. Interface} \textbf{6} 243--259.

\bibitem[{Kleijn and van~der Vaart(2006)}]{kleijn:vaart:06}
\textsc{Kleijn, B. J.~K.} and \textsc{van~der Vaart, A.~W.} (2006).
\newblock Misspecification in infinite-dimensional {B}ayesian statistics.
\newblock \textit{Ann. Statist.} \textbf{34} 837--877.

\bibitem[{Lei and Vu(2015)}]{lei:vu:15}
\textsc{Lei, J.} and \textsc{Vu, V.~Q.} (2015).
\newblock Sparsistency and agnostic inference in sparse pca.
\newblock \textit{Ann. Statist.} \textbf{43} 299--322.

\bibitem[{Lenkoski and Dobra(2011)}]{dobra:11b}
\textsc{Lenkoski, A.} and \textsc{Dobra, A.} (2011).
\newblock Computational aspects related to inference in {G}aussian graphical
  models with the {G}-{W}ishart prior.
\newblock \textit{J. Comput. Graph. Statist.} \textbf{20} 140--157.
\newblock Supplementary material available online.

\bibitem[{{Li} and {Jiang}(2014)}]{li:jiang:14}
\textsc{{Li}, C.} and \textsc{{Jiang}, W.} (2014).
\newblock {Model Selection for Likelihood-free Bayesian Methods Based on Moment
  Conditions: Theory and Numerical Examples}.
\newblock \textit{ArXiv e-prints} .

\bibitem[{Liao and Jiang(2011)}]{liao:jiang:11}
\textsc{Liao, Y.} and \textsc{Jiang, W.} (2011).
\newblock Posterior consistency of nonparametric conditional moment restricted
  models.
\newblock \textit{Ann. Statist.} \textbf{39} 3003--3031.

\bibitem[{Meinshausen and Buhlmann(2006)}]{meinshausen06}
\textsc{Meinshausen, N.} and \textsc{Buhlmann, P.} (2006).
\newblock High-dimensional graphs with the lasso.
\newblock \textit{Annals of Stat.} \textbf{34} 1436--1462.

\bibitem[{Mitchell and Beauchamp(1988)}]{mitchell:beauchamp:88}
\textsc{Mitchell, T.~J.} and \textsc{Beauchamp, J.~J.} (1988).
\newblock Bayesian variable selection in linear regression.
\newblock \textit{JASA} \textbf{83} 1023--1032.

\bibitem[{Narisetty and He(2014)}]{narisetti:he:14}
\textsc{Narisetty, N.} and \textsc{He, X.} (2014).
\newblock Bayesian variable selection with shrinking and diffusing priors.
\newblock \textit{Ann. Statist.} \textbf{42} 789--817.

\bibitem[{Negahban et~al.(2012)Negahban, Ravikumar, Wainwright and
  Yu}]{negahbanetal10}
\textsc{Negahban, S.~N.}, \textsc{Ravikumar, P.}, \textsc{Wainwright, M.~J.}
  and \textsc{Yu, B.} (2012).
\newblock A unified framework for high-dimensional analysis of $m$-estimators
  with decomposable regularizers.
\newblock \textit{Statistical Science} \textbf{27} 538--557.

\bibitem[{Pati et~al.(2014)Pati, Bhattacharya, Pillai and
  Dunson}]{pati:etal:14}
\textsc{Pati, D.}, \textsc{Bhattacharya, A.}, \textsc{Pillai, N.~S.} and
  \textsc{Dunson, D.} (2014).
\newblock Posterior contraction in sparse bayesian factor models for massive
  covariance matrices.
\newblock \textit{Ann. Statist.} \textbf{42} 1102--1130.
\newline\urlprefix\url{https://doi.org/10.1214/14-AOS1215}

\bibitem[{Peterson et~al.(2015)Peterson, Stingo and
  Vannucci}]{peterson:etal:14}
\textsc{Peterson, C.}, \textsc{Stingo, F.~C.} and \textsc{Vannucci, M.} (2015).
\newblock Bayesian inference of multiple gaussian graphical models.
\newblock \textit{Journal of the American Statistical Association} \textbf{110}
  159--174.

\bibitem[{Pinelis(2018)}]{pinelis:KL:18}
\textsc{Pinelis, I.} (2018).
\newblock Is ${KL}$-divergence ${D(P||Q)}$ strongly convex over ${P}$ in
  infinite dimension?
\newline\urlprefix\url{https://mathoverflow.net/q/307251}

\bibitem[{Raskutti et~al.(2010)Raskutti, Wainwright and Yu}]{raskutti:etal:10}
\textsc{Raskutti, G.}, \textsc{Wainwright, M.~J.} and \textsc{Yu, B.} (2010).
\newblock Restricted eigenvalue properties for correlated gaussian designs.
\newblock \textit{J. Mach. Learn. Res.} \textbf{11} 2241--2259.

\bibitem[{Ravikumar et~al.(2010)Ravikumar, Wainwright and
  Lafferty}]{ravikumaretal10}
\textsc{Ravikumar, P.}, \textsc{Wainwright, M.~J.} and \textsc{Lafferty, J.~D.}
  (2010).
\newblock High-dimensional {I}sing model selection using {$\ell_1$}-regularized
  logistic regression.
\newblock \textit{Ann. Statist.} \textbf{38} 1287--1319.

\bibitem[{Ravikumar et~al.(2011)Ravikumar, Wainwright, Raskutti and
  Yu}]{ravikumaretal11}
\textsc{Ravikumar, P.}, \textsc{Wainwright, M.~J.}, \textsc{Raskutti, G.} and
  \textsc{Yu, B.} (2011).
\newblock High-dimensional covariance estimation by minimizing
  {$\ell_1$}-penalized log-determinant divergence.
\newblock \textit{Electron. J. Stat.} \textbf{5} 935--980.

\bibitem[{Robert and Casella(2004)}]{robertetcasella04}
\textsc{Robert, C.~P.} and \textsc{Casella, G.} (2004).
\newblock \textit{Monte {C}arlo statistical methods}.
\newblock 2nd ed. Springer Texts in Statistics, Springer-Verlag, New York.

\bibitem[{Shen and Huang(2008)}]{shen:huang:08}
\textsc{Shen, H.} and \textsc{Huang, J.~Z.} (2008).
\newblock Sparse principal component analysis via regularized low rank matrix
  approximation.
\newblock \textit{Journal of Multivariate Analysis} \textbf{99} 1015 -- 1034.

\bibitem[{Sj\"ostrand et~al.(2018)Sj\"ostrand, Clemmensen, Larsen, Einarsson
  and Ersbøll}]{sjostrand:etal:18}
\textsc{Sj\"ostrand, K.}, \textsc{Clemmensen, L.}, \textsc{Larsen, R.},
  \textsc{Einarsson, G.} and \textsc{Ersbøll, B.} (2018).
\newblock Spasm: A matlab toolbox for sparse statistical modeling.
\newblock \textit{Journal of Statistical Software, Articles} \textbf{84} 1--37.

\bibitem[{Varin et~al.(2011)Varin, Reid and Firth}]{varin:etal:11}
\textsc{Varin, C.}, \textsc{Reid, N.} and \textsc{Firth, D.} (2011).
\newblock An overview of composite likelihood methods.
\newblock \textit{Statistica Sinica} \textbf{21} 5--42.

\bibitem[{Vershynin(2018)}]{vershynin:18}
\textsc{Vershynin, R.} (2018).
\newblock \textit{High-Dimensional Probability: An Introduction with
  Applications in Data Science}.
\newblock Cambridge Series in Statistical and Probabilistic Mathematics,
  Cambridge University Press.

\bibitem[{Wang and Blei(2018)}]{wang:blei:18}
\textsc{Wang, Y.} and \textsc{Blei, D.~M.} (2018).
\newblock Frequentist consistency of variational bayes.
\newblock \textit{Journal of the American Statistical Association} \textbf{0}
  1--15.

\bibitem[{{Xie} et~al.(2018){Xie}, {Xu}, {Priebe} and {Cape}}]{xie:etal:18}
\textsc{{Xie}, F.}, \textsc{{Xu}, Y.}, \textsc{{Priebe}, C.~E.} and
  \textsc{{Cape}, J.} (2018).
\newblock {Bayesian Estimation of Sparse Spiked Covariance Matrices in High
  Dimensions}.
\newblock \textit{arXiv e-prints}  arXiv:1808.07433.

\bibitem[{Yang and He(2012)}]{yang:he:2012}
\textsc{Yang, W.} and \textsc{He, X.} (2012).
\newblock {B}ayesian empirical likelihood for quantile regression.
\newblock \textit{Ann. Statist.} \textbf{40} 1102--1131.

\bibitem[{Yang et~al.(2016)Yang, Wainwright and Jordan}]{yang:etal:15}
\textsc{Yang, Y.}, \textsc{Wainwright, M.~J.} and \textsc{Jordan, M.~I.}
  (2016).
\newblock On the computational complexity of high-dimensional bayesian variable
  selection.
\newblock \textit{Ann. Statist.} \textbf{44} 2497--2532.

\bibitem[{Yu et~al.(2014)Yu, Wang and Samworth}]{yu:etal:15}
\textsc{Yu, Y.}, \textsc{Wang, T.} and \textsc{Samworth, R.~J.} (2014).
\newblock {A useful variant of the Davis-Kahan theorem for statisticians}.
\newblock \textit{Biometrika} \textbf{102} 315--323.

\bibitem[{Zou et~al.(2006)Zou, Hastie and Tibshirani}]{zou:etal:06}
\textsc{Zou, H.}, \textsc{Hastie, T.} and \textsc{Tibshirani, R.} (2006).
\newblock Sparse principal component analysis.
\newblock \textit{Journal of Computational and Graphical Statistics}
  \textbf{15} 265--286.

\end{thebibliography}

\newpage

\appendix

\section{Proofs of the main results}
\subsection{Some preliminary lemmas}
Let $\mu_\delta(\rmd\theta)$ denote the product measure on $\rset^p$ given by
\[\mu_{\delta}(\rmd \theta)\eqdef \prod_{j=1}^p\mu_{\delta_{j}}(\rmd \theta_{j}),\]
 where  $\mu_{0}(\rmd x)$ is the Dirac mass at $0$, and $\mu_1(\rmd x)$ is the Lebesgue measure on $\rset$. We start with  a useful lower bound on the normalizing constant.

\begin{lemma}\label{lem:control:nc}
Assume H\ref{H1}-H\ref{H2}. For $z\in\Zset$, let $C(z)$
denote the normalizing constant of $\Pi(\cdot\vert z)$.
For $z\in\e_{0}$, we have
%\begin{equation}\label{eq:control:nc:check:L}
%C(z)  \geq \omega_{\delta_{\star}} e^{\ell(\theta_{\star};z)} e^{-\rho\|\theta_\star\|_1}\left(\frac{\rho^2}{\bar\kappa+\rho^2}\right)^{\|\theta_\star\|_0},
%\end{equation}
%if $\Psi(t)=|t|$, and 
\begin{equation}\label{eq:control:nc:check:G}
C(z)  \geq \omega(\delta_{\star}) e^{\ell(\theta_{\star};z)} e^{-\frac{\rho_1}{2}\|\theta_\star\|_2^2}\left(\frac{\rho_1}{\bar\kappa+\rho_1}\right)^{\frac{\|\theta_\star\|_0}{2}}.
\end{equation}
\end{lemma}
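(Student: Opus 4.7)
The plan is to lower bound $C(z) = \sum_\delta\int e^{\ell(\theta_\delta,z)}\omega(\delta)(\rho_1/2\pi)^{\|\delta\|_0/2}(\rho_0/2\pi)^{(p-\|\delta\|_0)/2} e^{-(\rho_1/2)\|\theta_\delta\|_2^2-(\rho_0/2)\|\theta-\theta_\delta\|_2^2}\rmd\theta$ by discarding all terms except $\delta=\delta_\star$. Because the integrand factorizes into a piece depending only on $[\theta]_{\delta_\star}$ (namely $e^{\ell((u,0)_{\delta_\star};z)-(\rho_1/2)\|u\|_2^2}$) and a piece depending only on $[\theta]_{\delta_\star^c}$ (namely $e^{-(\rho_0/2)\|v\|_2^2}$), the latter integral evaluates to $(2\pi/\rho_0)^{(p-s_\star)/2}$ and exactly cancels the corresponding prior prefactor. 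This reduces the problem to bounding
\[
C(z)\geq\omega(\delta_\star)\left(\tfrac{\rho_1}{2\pi}\right)^{s_\star/2}\int_{\rset^{s_\star}} e^{\ell((u,0)_{\delta_\star};z)-\frac{\rho_1}{2}\|u\|_2^2}\rmd u.
\]

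Next I would use the fact that $z\in\e_0$ to Taylor-expand $\ell$ around $\theta_\star$ on the sparse subspace $\rset^p_{\delta_\star}$. Writing $a\eqdef[\theta_\star]_{\delta_\star}$ and $g\eqdef[\nabla\ell(\theta_\star;z)]_{\delta_\star}$, the definition of $\L_{\theta_\star}$ together with the restricted concavity bound in $\e_0$ gives
\[
\ell((u,0)_{\delta_\star};z)\geq \ell(\theta_\star;z)+\pscal{g}{u-a}-\tfrac{\bar\kappa}{2}\|u-a\|_2^2\qquad(u\in\rset^{s_\star}).
\]
Substituting this into the remaining integral yields a genuine Gaussian integral in $u$ with precision matrix $(\bar\kappa+\rho_1)I_{s_\star}$, which I would evaluate by completing the square. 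This produces a factor $(2\pi/(\bar\kappa+\rho_1))^{s_\star/2}$ (combining with the $(\rho_1/2\pi)^{s_\star/2}$ prefactor to give the desired $(\rho_1/(\bar\kappa+\rho_1))^{s_\star/2}$) and an exponential residual
\[
R\eqdef \frac{\|g+\bar\kappa a\|_2^2}{2(\bar\kappa+\rho_1)}-\tfrac{\bar\kappa}{2}\|a\|_2^2-\pscal{g}{a}.
\]

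The only remaining step, and the one I would flag as the main obstacle, is to verify that $R\geq -\frac{\rho_1}{2}\|\theta_\star\|_2^2$ without invoking any additional hypothesis on the score $g$. Using $\|\theta_\star\|_2=\|a\|_2$ and expanding $\|g+\bar\kappa a\|_2^2$ term by term, one can collect everything into the identity
\[
R+\tfrac{\rho_1}{2}\|a\|_2^2=\frac{\|g-\rho_1 a\|_2^2}{2(\bar\kappa+\rho_1)}\geq 0,
\]
which is trivially non-negative. This gives exactly the inequality~(\ref{eq:control:nc:check:G}). Note the bound on $\|\nabla\ell(\theta_\star;z)\|_\infty$ from H\ref{H1} is not actually needed here; the concavity half of $\e_0$ is what drives the proof, and the appearance of $\bar\kappa$ on the right-hand side is the trace of this step.
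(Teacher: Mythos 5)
Your proposal is correct and follows essentially the same route as the paper's proof: restrict the sum to $\delta=\delta_\star$, integrate out the spike components to cancel the $\rho_0$ prefactor, apply the restricted concavity bound from $\e_0$ on $\rset^p_{\delta_\star}$, and evaluate the resulting Gaussian integral, discarding a nonnegative residual exponent (your identity $R+\tfrac{\rho_1}{2}\|a\|_2^2=\|g-\rho_1 a\|_2^2/\bigl(2(\bar\kappa+\rho_1)\bigr)$ is exactly the paper's term $\|G-\rho_1\theta_\star\|_2^2$ residual, obtained by completing the square around $\theta_\star$ rather than the origin). Your observation that the $\|\nabla\ell(\theta_\star;z)\|_\infty$ bound is not needed here is also consistent with the paper's argument.
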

\begin{proof}
The proof is very similar to the proof of Lemma 11 of \cite{atchade:15b}. We set 
 \[\bar\omega(\delta)\eqdef \omega(\delta) \left(\frac{\rho_1}{2\pi}\right)^{\frac{\|\delta\|_0}{2}} \left(\frac{\rho_0}{2\pi}\right)^{\frac{p-\|\delta\|_0}{2}}.\]
 Fix  $z\in\e_{0}$. Then $\Pi$ is well-defined, and we  have 
\begin{eqnarray*}
C(z) &  = & \sum_{\delta\in\Delta} \bar\omega(\delta)\int_{\rset^p}e^{-\ell(\theta_\delta;z) -\frac{\rho_1}{2}\|\theta_\delta\|_2^2 -\frac{\rho_0}{2}\|\theta-\theta_\delta\|_2^2}\rmd \theta\\
& \geq & \bar\omega(\delta_{\star})\int_{\rset^p}e^{-\ell(\theta_{\delta_\star};z) -\frac{\rho_1}{2}\|\theta_{\delta_\star}\|_2^2 -\frac{\rho_0}{2}\|\theta-\theta_{\delta_\star}\|_2^2}\rmd \theta \\
& = & \bar\omega(\delta_{\star})(2\pi\rho_0^{-1})^{\frac{p-\|\delta_\star\|_0}{2}}\int_{\rset^p}e^{\ell(u;z)-\frac{\rho_1}{2}\|u\|_2^2}\mu_{\delta_\star}(\rmd u).\end{eqnarray*}
Setting $G\eqdef \nabla\ell(\theta_\star;z)$,  we have for all $u\in\rset^p_{\delta_\star}$ and $z\in\e_0$,
\[\ell(u;z) -\ell(\theta_\star;z) - \pscal{G}{u-\theta_\star} \geq -\frac{\bar\kappa}{2}\|u-\theta_\star\|_2^2,\]
which implies that
\[C(z) \geq \omega(\delta_{\star}) \left(\frac{\rho_1}{2\pi}\right)^{s_\star/2} e^{\ell(\theta_\star;z)-\frac{\rho}{2}\|\theta_\star\|_2^2} \int_{\rset^p}e^{\pscal{G}{u-\theta_\star}-\frac{\bar\kappa}{2}\|u-\theta_\star\|_2^2 + \frac{\rho_1}{2}\|\theta_\star\|_2^2 -\frac{\rho_1}{2}\|u\|_2^2}\mu_{\delta_\star}(\rmd u).\]
%If $\Psi(t)=|t|$, we note that for $u\in\rset^p_{\delta_\star}$, $\|\theta_\star\|_1 - \|u\|_1 \geq -\|u - \theta_\star\|_1$. Then proceeding as in \cite{atchade:15b}~Lemma 11, we get
%\[\int_{\rset^p}e^{\pscal{G}{u-\theta_\star}-\frac{\bar\kappa}{2}\|u-\theta_\star\|_2^2 + \rho\|\theta_\star\|_1 -\rho \|u\|_1}\mu_{\delta_\star}(\rmd u) \geq \left(\frac{2\rho}{\bar\kappa + \rho^2}\right)^{\|\delta_\star\|_0},\]
%and (\ref{eq:control:nc:check:L}) easily follows. 
For all $u\in\rset^p_{\delta_\star}$, $(1/2)(\|\theta_\star\|_2^2 - \|u\|_2^2) = -\frac{1}{2}\|u-\theta_\star\|_2^2 -\pscal{\theta_\star}{u-\theta_\star}$. Therefore,
\begin{multline*}
\int_{\rset^p}e^{\pscal{G}{u-\theta_\star}-\frac{\bar\kappa}{2}\|u-\theta_\star\|_2^2 + \frac{\rho_1}{2}\|\theta_\star\|_2^2 -\frac{\rho_1}{2}\|u\|_2^2}\mu_{\delta_\star}(\rmd u) \\
= \int_{\rset^p}e^{\pscal{G-\rho_1 \theta_\star}{u-\theta_\star}-\frac{\bar\kappa+\rho_1}{2}\|u-\theta_\star\|_2^2}\mu_{\delta_\star}(\rmd u) = \left(\frac{2\pi}{\bar\kappa + \rho_1}\right)^{\frac{s_\star}{2}}e^{\frac{\bar\kappa+\rho_1}{2}\|G-\rho_1\theta_\star\|_2^2},\end{multline*}
and (\ref{eq:control:nc:check:G})  follows easily.

\end{proof}

Our proofs rely on the existence of some generalized testing procedures that we develop next, following ideas from \cite{atchade:15b}. More specifically we will make use of the following result which follows by combining Lemma 6.1 and Equation (6.1) of \cite{kleijn:vaart:06}.
\begin{lemma}[Kleijn-Van der Vaart (2006)]\label{kv}
Let $(\Xset,\B,\lambda)$ be a measure space with a sigma-finite measure $\lambda$. Let $p$ be a density on $\Xset$, and  $\mathcal{Q}$ a family of integrable real-valued functions on $\Xset$. There exists a measurable $\phi:\;\Xset \to [0,1]$ such that
\[\sup_{q\in\mathcal{Q}}\left[\int \phi p\rmd \lambda + \int (1-\phi)q \rmd \lambda\right]\leq \sup_{q\in\textsf{conv}(\mathcal{Q})} \H(p,q),\]
where $\textsf{conv}(\mathcal{Q})$ is the convex hull of $\mathcal{Q}$, and $\H(q_1,q_2)\eqdef\int \sqrt{q_1q_2}\rmd \lambda$.
\end{lemma}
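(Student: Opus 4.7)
The plan is to deduce this by the classical two-step argument: a pointwise Neyman--Pearson bound against a single alternative, combined with a minimax exchange to handle the whole family $\mathcal{Q}$.

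First I would handle the case of a single $q$. Taking $\phi_q \eqdef \textbf{1}_{\{p \leq q\}}$, which is measurable with values in $\{0,1\} \subset [0,1]$, one has
\[\int \phi_q\, p\,\rmd\lambda + \int (1-\phi_q)\, q\,\rmd\lambda \;=\; \int \min(p,q)\,\rmd\lambda \;\leq\; \int \sqrt{pq}\,\rmd\lambda \;=\; \H(p,q),\]
by the elementary inequality $\min(a,b)\leq\sqrt{ab}$ for nonnegative reals. This establishes the statement when $|\mathcal{Q}|=1$ with a very explicit test.

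To upgrade to a family, I would study the functional
\[F(\phi,q) \;\eqdef\; \int \phi\, p\,\rmd\lambda + \int (1-\phi)\, q\,\rmd\lambda\]
on $\Phi\times\textsf{conv}(\mathcal{Q})$, where $\Phi$ is the convex set of measurable $\phi:\Xset\to[0,1]$. Because $F$ is affine in $q$ with $\phi$ fixed, $\sup_{q\in\mathcal{Q}} F(\phi,q) = \sup_{q\in\textsf{conv}(\mathcal{Q})} F(\phi,q)$, so the target quantity is $\inf_{\phi\in\Phi}\sup_{q\in\textsf{conv}(\mathcal{Q})} F(\phi,q)$. Equipping $\Phi$ with the weak-$*$ topology of $L^\infty(\lambda)$ makes it compact and convex (Banach--Alaoglu), and for each fixed $q$, $\phi\mapsto F(\phi,q)$ is linear and weak-$*$ continuous. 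Applying Sion's (or Ky Fan's) minimax theorem would then give
\[\inf_{\phi\in\Phi} \sup_{q\in\textsf{conv}(\mathcal{Q})} F(\phi,q) \;=\; \sup_{q\in\textsf{conv}(\mathcal{Q})} \inf_{\phi\in\Phi} F(\phi,q).\]
By step one, the inner infimum on the right equals $\int \min(p,q)\,\rmd\lambda \leq \H(p,q)$, so the common value is bounded by $\sup_{q\in\textsf{conv}(\mathcal{Q})} \H(p,q)$. Existence of a minimizing $\phi$ follows from weak-$*$ compactness of $\Phi$ and lower semicontinuity of the upper envelope $\phi\mapsto\sup_q F(\phi,q)$.

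The main obstacle is the rigorous deployment of the minimax theorem in this infinite-dimensional setting: one must justify compactness and the continuity/convexity hypotheses, which is delicate when $\mathcal{Q}$ is uncountable or when $p$ and the elements of $\mathcal{Q}$ are not uniformly bounded. A more hands-on alternative (the route essentially taken in Kleijn--van der Vaart (2006), which the statement cites) is to first reduce to finite subfamilies of $\mathcal{Q}$---where a purely finite-dimensional minimax argument on simplices produces a test achieving the Hellinger bound---and then obtain the general test by a compactness/limiting procedure along an increasing approximating sequence. Either path arrives at the displayed inequality.
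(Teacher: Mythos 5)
Your argument is correct, and it is essentially the proof of the result the paper merely cites: the paper does not prove Lemma~\ref{kv} itself but imports it from Kleijn--van der Vaart (2006), whose proof of their Lemma 6.1 is exactly your route --- the Neyman--Pearson test $\textbf{1}_{\{p\leq q\}}$ giving $\int\min(p,q)\,\rmd\lambda\leq \H(p,q)$ against a single alternative, followed by a minimax exchange (Sion/Ky Fan) over the convex, weak-$*$ compact set of tests in $L^\infty(\lambda)$, with $\sigma$-finiteness ensuring the $L^1$--$L^\infty$ duality and compactness also yielding attainment of the minimizing $\phi$. The only point worth flagging is that the pointwise bound $\min(p,q)\leq\sqrt{pq}$ tacitly requires $q\geq 0$, which is how the lemma is used in the paper (the $q$'s there are nonnegative likelihood-ratio-weighted densities), so your proof covers the case actually needed.
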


We introduce the quasi-likelihood
\[f_\theta(z) \eqdef e^{\ell(\theta;z)},\;\;\theta\in\rset^p,\;z\in\Zset.\]
For $\theta_1\in\rset^p$, we recall that
\[\L_{\theta_1}(\theta;z)\eqdef \ell(\theta;z) - \ell(\theta_1;z) -\pscal{\nabla\ell(\theta_1;z)}{\theta-\theta_1},\;\theta\in\rset^p.\]

We develop the test in a slightly more general setting. More specifically , in order to handle the PCA example we will allow the mode of  $\ell(\cdot;z)$ to depend on  $z$.

Let $\delta_\star$ be some sparse element $\Delta$. Let $\Theta_\star$ be a finite nonempty subset of $\rset^p_{\delta_\star}$ (the set of possible contraction points). Let $\bar\rho>0$ be a constant, $\bar s\geq 1$ an integer, and $\r$ a rate function. For each $\theta_\star\in\Theta_\star$,  we define
\begin{multline*}
\e_{\textsf{t},\theta_\star}\eqdef\left\{z\in\Zset:\; \|\nabla \log f_{\theta_\star}(z)\|_\infty\leq \frac{\bar\rho}{2},\;\;\right.\\
\left. \mbox{ and for all } \delta\in\Delta_{\bar s},\;\theta\in\rset^p_\delta,\;  \L_{\theta_\star}(\theta;z) \leq -\frac{1}{2}\r(\|\theta-\theta_\star\|_2)\right\},
\end{multline*}
which  roughly represents the set of data points for which $\Pi(\cdot\vert z)$ could contract towards $\theta_\star$.

\begin{lemma}\label{test}
Set $s_\star\eqdef\|\delta_\star\|_0$, and
\[\epsilon \eqdef \inf\left\{z>0:\; \r(x) -2\bar \rho(s_\star + \bar s)^{1/2} x\geq 0,\;\mbox{ for all } x\geq z\right\}.\] 
Let $f_\star$ be a density on $\Zset$, and $M>2$ a constant. There exists a measurable function $\phi:\Zset\to [0,1]$ such that
\[ \int_\Zset \phi(z)f_\star(z)\rmd z \leq  \frac{2|\Theta_\star|(9p)^{\bar s}e^{-\frac{M}{8}\bar \rho (s_\star + \bar s)^{1/2}\epsilon}}{1-e^{-\frac{M}{8}\bar \rho (s_\star + \bar s)^{1/2}\epsilon}},\]
where $|\Theta_\star|$ denotes the cardinality of $\Theta_\star$. Furthermore, for any $\delta\in\Delta_{\bar s}$, any $\theta\in\rset^p_\delta$ such that  $\|\theta-\theta_\star\|_2 >jM\epsilon$ for some $j\geq 1$, and some $\theta_\star\in\Theta_\star$, we have
\[\int_{\e_{\textsf{t},\theta_\star}}(1-\phi(z)) \frac{f_{\theta}(z)}{f_{\theta_\star}(z)}f_{\star}(z)\rmd z \leq  e^{-\frac{1}{8}\r\left(\frac{jM\epsilon}{2}\right)}.\]
\end{lemma}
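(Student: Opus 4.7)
The plan is to construct $\phi$ as the maximum of Kleijn--Van der Vaart tests built shell by shell. The main technical ingredient is a uniform pointwise bound on the likelihood ratio $f_\theta/f_{\theta_\star}$ restricted to $\e_{\textsf{t},\theta_\star}$, which will control the Hellinger supremum over convex hulls directly, without any covering argument.

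First I would establish the pointwise estimate: for $z\in\e_{\textsf{t},\theta_\star}$, $\delta\in\Delta_{\bar s}$, $\theta\in\rset^p_\delta$ and $r\eqdef\|\theta-\theta_\star\|_2$, the Taylor expansion at $\theta_\star$ combined with the defining properties of $\e_{\textsf{t},\theta_\star}$ (bounded score and restricted strong concavity), together with the sparsity inequality $\|\theta-\theta_\star\|_1\leq(s_\star+\bar s)^{1/2}r$, yields
\[
\log\frac{f_\theta(z)}{f_{\theta_\star}(z)}=\L_{\theta_\star}(\theta;z)+\pscal{\nabla\log f_{\theta_\star}(z)}{\theta-\theta_\star}\leq -\frac{1}{2}\r(r)+\frac{\bar\rho}{2}(s_\star+\bar s)^{1/2}r.
\]
By the defining property of $\epsilon$, whenever $r\geq\epsilon$ the second term is bounded by $\r(r)/4$, and therefore $\sqrt{f_\theta(z)/f_{\theta_\star}(z)}\leq e^{-\r(r)/8}$ on $\e_{\textsf{t},\theta_\star}$.

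For each $\theta_\star\in\Theta_\star$, $\delta\in\Delta_{\bar s}$ and integer $j\geq 1$ I would then partition the alternatives into shells
\[
S_{\theta_\star,\delta,j}\eqdef\left\{\theta\in\rset^p_\delta:\|\theta-\theta_\star\|_2\in\ocint{jM\epsilon/2,(j+1)M\epsilon/2}\right\}
\]
and apply Lemma~\ref{kv} to the family $\mathcal{Q}_{\theta_\star,\delta,j}\eqdef\{\1_{\e_{\textsf{t},\theta_\star}}(f_\theta/f_{\theta_\star})f_\star:\theta\in S_{\theta_\star,\delta,j}\}$. Since $M>2$ and $j\geq 1$ imply $jM\epsilon/2>\epsilon$, the pointwise bound together with the monotonicity of $\r$ gives $f_{\theta_i}/f_{\theta_\star}\leq e^{-\r(jM\epsilon/2)/4}$ on $\e_{\textsf{t},\theta_\star}$ uniformly over the shell. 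For any $q=\sum_i\alpha_iq_i$ in $\mathrm{conv}(\mathcal{Q}_{\theta_\star,\delta,j})$, it follows that $\sum_i\alpha_i f_{\theta_i}/f_{\theta_\star}\leq e^{-\r(jM\epsilon/2)/4}$ on that set, whence $\H(f_\star,q)\leq e^{-\r(jM\epsilon/2)/8}$. Lemma~\ref{kv} then yields $\phi_{\theta_\star,\delta,j}:\Zset\to[0,1]$ with both $\int\phi_{\theta_\star,\delta,j}f_\star$ and $\sup_{\theta\in S_{\theta_\star,\delta,j}}\int_{\e_{\textsf{t},\theta_\star}}(1-\phi_{\theta_\star,\delta,j})(f_\theta/f_{\theta_\star})f_\star$ bounded by $e^{-\r(jM\epsilon/2)/8}$.

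Setting $\phi\eqdef\max_{\theta_\star,\delta,j}\phi_{\theta_\star,\delta,j}$, the Type~I control follows from subadditivity, the estimate $\r(jM\epsilon/2)\geq jM\bar\rho(s_\star+\bar s)^{1/2}\epsilon$ coming from the defining property of $\epsilon$, and a combinatorial bound of the form $|\Delta_{\bar s}|\leq (9p)^{\bar s}/2$ (say via $\sum_{k=0}^{\bar s}\binom{p}{k}\leq 3p^{\bar s}/2$ for $p\geq 3$ and $9^{\bar s}\geq 3$). For the Type~II bound, any $\theta\in\rset^p_\delta$ with $\|\theta-\theta_\star\|_2>jM\epsilon$ lies in some shell $S_{\theta_\star,\delta,k}$ with $k\geq 2j$, and then $1-\phi\leq 1-\phi_{\theta_\star,\delta,k}$ together with the monotonicity of $\r$ gives
\[
\int_{\e_{\textsf{t},\theta_\star}}(1-\phi)\frac{f_\theta}{f_{\theta_\star}}f_\star\leq e^{-\r(kM\epsilon/2)/8}\leq e^{-\r(jM\epsilon)/8}\leq e^{-\r(jM\epsilon/2)/8}.
\]
The main obstacle is the convex-hull supremum demanded by Lemma~\ref{kv}: the uniform shell-wise pointwise bound from the first step turns it into a trivial computation, and the choice of shell half-width $M\epsilon/2$ is essential so that the resulting geometric series precisely matches the denominator $1-e^{-(M/8)\bar\rho(s_\star+\bar s)^{1/2}\epsilon}$ in the stated bound.
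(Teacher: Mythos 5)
Your proof is correct, but it handles the key technical point differently from the paper, and the difference is worth recording. Both arguments share the same skeleton: Kleijn--Van der Vaart tests (Lemma~\ref{kv}) built over a shell decomposition of the alternatives, a maximum over the countable collection of tests, and a geometric series driven by $\r(jM\epsilon/2)\geq jM\bar\rho(s_\star+\bar s)^{1/2}\epsilon$. Where you diverge is in how the convex-hull Hellinger supremum is controlled. The paper fixes a candidate $\theta$ in an annulus, applies Lemma~\ref{kv} to the ball $\{u\in\rset^p_\delta:\|u-\theta\|_2\leq\eta/2\}$, controls the affinity by Jensen's inequality using the \emph{integrated} bound $\int_{\e_{\textsf{t},\theta_\star}}(f_u/f_{\theta_\star})f_\star\leq e^{\frac{\bar\rho}{2}\|u-\theta_\star\|_1-\frac12\r(\|u-\theta_\star\|_2)}$ for each element of the combination, and then covers each annulus by a maximally $(jM\epsilon/2)$-separated net, which is where the $9^{\bar s}$ factor in the type~I bound originates. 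You instead prove the \emph{pointwise} bound $\log\bigl(f_\theta(z)/f_{\theta_\star}(z)\bigr)\leq-\frac12\r(r)+\frac{\bar\rho}{2}(s_\star+\bar s)^{1/2}r\leq-\frac14\r(r)$ on $\e_{\textsf{t},\theta_\star}$ for $r=\|\theta-\theta_\star\|_2\geq\epsilon$, which is uniform over a whole shell; a single application of Lemma~\ref{kv} per shell then suffices, the convex-hull supremum is immediate, no covering argument is needed, and $(9p)^{\bar s}$ enters your type~I bound only as slack. Your shells of half-width $M\epsilon/2$ and the observation that $\|\theta-\theta_\star\|_2>jM\epsilon$ places $\theta$ in a shell of index $k\geq 2j$ correctly reproduce both stated error bounds. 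What each approach buys: the paper's ball-plus-net construction only requires the per-alternative integrated bound, so it would survive a weakening of $\e_{\textsf{t},\theta_\star}$ in which the curvature control holds only in an averaged sense; your route exploits the full strength of the event (restricted concavity at every sparse $\theta$) to get a shorter, covering-free proof -- indeed, with your pointwise bound in hand the trivial test $\phi\equiv 0$ already satisfies both conclusions of the lemma, which makes the simplification transparent but also shows it is tied to the strong, pointwise definition of $\e_{\textsf{t},\theta_\star}$.
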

\begin{proof}

Define
\[\bar q_{\theta_\star,u}(z) \eqdef \frac{f_{u}(z)}{f_{\theta_\star}(z)}f_{\star}(z)\textbf{1}_{\e_{\textsf{t},\theta_\star}}(z),\;\;\;\theta_\star\in\Theta_\star,\;\;u\in\rset^p,\;\;z\in\Zset.\]
Using the properties of the event $\e_{\textsf{t},\theta_\star}$, we note that for $\delta\in\Delta_{\bar s}$, and  $u\in\rset^p_\delta$ we have
\begin{equation}\label{bound:ratio:test}
\int_{\Zset}\bar q_{\theta_\star,u}(z)\rmd z = \int_{\e_{\textsf{t},\theta_\star}} e^{\pscal{\nabla \ell(\theta_\star;z)}{u-\theta_\star} +\L_{\theta_\star}(u;z)}f_{\star}(z)\rmd z \leq e^{\frac{\bar\rho}{2}\|u-\theta_\star\|_1}<\infty.\end{equation}
%Therefore for $u\in\rset^p_\delta$ for some $\delta\in\Delta_{\bar s}$
%\[\int_{\Zset} \bar q_{\theta_\star,u}(z)\rmd z \leq e^{\frac{\bar\rho}{2}\|u-\theta_\star\|_1}<\infty.\]
Fix $\eta\geq  2\epsilon$ arbitrary. Fix $\theta_\star\in\Theta_\star$, $\delta\in\Delta_{\bar s}$, and fix  $\theta\in\rset^p_\delta$ such that  $\|\theta-\theta_\star\|_2>\eta$. Let 
\[\mathcal{P}=\mathcal{P}_{\theta_\star,\delta,\theta}\eqdef \left\{\bar q_{\theta_\star,u}:\; u\in\rset^p_\delta,\;\|u-\theta\|_2\leq \frac{\eta}{2}\right\}.\]
According to Lemma \ref{kv}, applied with  $p = f_{\star}$, and $\mathcal{Q}=\mathcal{P}$,  there exists a test function $\phi_{\theta_\star,\delta,\theta}$ (that we will write simply as $\phi$ for convenience) such that
\begin{equation}\label{lem:test:eq1}
\sup_{q\in\mathcal{P}}\left[\int \phi  f_{\star} + \int (1-\phi) q \right]\leq \sup_{q\in\textsf{conv}(\mathcal{P})} \int_{\Zset}\sqrt{f_\star(z) q(z)}\rmd z.\end{equation}
Any $q\in \textsf{conv}(\mathcal{P})$ can be written as $q = \sum_j\alpha_j \bar q_{\theta_\star,u_j}$, where $\sum_j\alpha_j=1$, $u_j\in\rset^p_\delta$, $\|u_j-\theta\|_2\leq \eta/2$. Notice that this implies that $\|u_j-\theta_\star\|_2> \eta/2\geq \epsilon$. Therefore, by Jensen's inequality, the first inequality of (\ref{bound:ratio:test}), and the properties of the set $\e_{\textsf{t},\theta_\star}$, we get
\begin{eqnarray*}
\int_{\Zset}\sqrt{f_\star(z) q(z)}\rmd z 
%& = & \int_{\e_{\mathsf{t}}} \sqrt{\sum_j\alpha_j\frac{f_{u_j}(z)}{f_{\theta_\star}(z)}}f_{\star}(z)\rmd z\nonumber\\
& \leq &\sqrt{\sum_{j} \alpha_j \int_{\e_{\textsf{t},\theta_\star}} \frac{f_{u_j}(z)}{f_{\theta_\star}(z)} f_{\star}(z)\rmd z}\nonumber\\
& \leq & \sqrt{\sum_{j} \alpha_j  e^{\frac{\bar\rho}{2}\|u_j-\theta_\star\|_1 -\frac{1}{2}\r(\|u_j-\theta_\star\|_2)}},\nonumber\\
& \leq & \sqrt{\sum_{j} \alpha_j  e^{-\frac{1}{4}\r(\|u_j-\theta_\star\|_2)}}\\
& \leq & e^{-\frac{1}{8}\r\left(\frac{\eta}{2}\right)}.
\end{eqnarray*}
Consequently, (\ref{lem:test:eq1}) yields
\begin{equation}\label{lem:test:eq2}
\sup_{q\in\mathcal{P}}\left[\int \phi  f_{\star} + \int (1-\phi)  q \right] \leq e^{-\frac{1}{8}\r\left(\frac{\eta}{2}\right)}.
\end{equation}

For $M>2$, write $\cup_{\theta_\star}\cup_\delta\{\theta\in\rset^p_\delta:\;\|\theta-\theta_\star\|_2 > M\epsilon\}$ as $\cup_{\theta_\star}\cup_\delta\cup_{j\geq 1} \A_{\epsilon}(\theta_\star,\delta,j)$, where the unions in $\delta$ are taken over all $\delta$ such that $\|\delta\|_0 \leq \bar s$, and 
 \[\A_{\epsilon}(\theta_\star,\delta,j) \eqdef \left\{\theta\in\rset^p_\delta:\;jM\epsilon <\|\theta-\theta_\star\|_2 \leq (j+1)M\epsilon\right\}.\]
For $\A_{\epsilon}(\theta_\star,\delta,j)\neq\emptyset$, let $\mathcal{S}(\theta_\star,\delta,j)$ be a maximally $(jM\epsilon/2)$-separated point in $\A_{\epsilon}(\theta_\star,\delta,j)$. It is easily checked that the cardinality of $\mathcal{S}(\theta_\star,\delta,j)$ is upper bounded by $9^{\|\delta\|_0}\leq  9^{\bar s}$ (see for instance \cite{ghosal:etal:00}~Example 7.1 for the arguments). 
For $\theta\in\mathcal{S}(\theta_\star,\delta,j)$, let $\phi$ denote the test function obtained above with  $\eta = jM\epsilon$. From  (\ref{lem:test:eq2}), this test  satisfies 
\begin{equation}\label{lem:test:eq3}
\sup_{u\in\rset^p_\delta,\;\|u-\theta\|_2\leq \frac{jM\epsilon}{2}} \left[ \int_{\Zset}\phi(z) f_\star(z)\rmd z + \int_{\Zset}(1-\phi(z)) \bar q_{\theta_\star,u}(z)\rmd z \right]\leq   e^{-\frac{1}{8}\r\left(\frac{jM\epsilon}{2}\right)} .\end{equation}
We then set
\[\bar \phi = \max_{\theta_\star\in\Theta_\star}\;\max_{\delta:\;\|\delta\|_0\leq \bar s}\;\;\sup_{j\geq 1}\;\max_{\theta\in\mathcal{S}(\theta_\star,\delta,j)}\; \phi.\]
It then follows that
\begin{multline*}
\int_\Zset \bar \phi(z) f_\star(z)\rmd z \leq \sum_{\theta_\star}\sum_{k=0}^{\bar s}\;\sum_{\delta:\;\|\delta\|_0=k}\; \sum_{j\geq 1} \;\sum_{\theta \in\mathcal{S}(\theta_\star,\delta,j)} \int_{\Zset} \phi(z)f_\star(z)\rmd z \\
\leq |\Theta_\star| \sum_{k=0}^{\bar s}{ p\choose k}9^{k} \sum_{j\geq 1}e^{-\frac{1}{8}\r\left(\frac{jM\epsilon}{2}\right)} \leq 2|\Theta_\star| (9p)^{\bar s}  \sum_{j\geq 1}e^{-\frac{1}{8}\r\left(\frac{jM\epsilon}{2}\right)}.\end{multline*}
Since $jM\epsilon/2\geq \epsilon$, we can say that $\r(jM\epsilon/2) \geq 2\bar\rho (s_\star + \bar s)^{1/2}(jM\epsilon/2)$. Hence
\[\sum_{j\geq 1}e^{-\frac{1}{8}\r\left(\frac{jM \epsilon}{2}\right)} \leq \frac{e^{-\frac{M}{8}\bar \rho (s_\star + \bar s)^{1/2}\epsilon}}{1-e^{-\frac{M}{8}\bar \rho (s_\star + \bar s)^{1/2}\epsilon}}.\]

And if for some $\delta$, such that $\|\delta\|_0\leq \bar s$, some $\theta_\star\in\Theta_\star$,  and some $\theta\in\rset^p_\delta$ we have $\|\theta-\theta_\star\|_2>jM\epsilon$, then $\theta$ resides within $(iM\epsilon)/2$ of some point $\theta_0\in\mathcal{S}(\theta_\star,\delta,i)$ for some $i\geq j$. Hence, by  (\ref{lem:test:eq3}),
\[\int_{\Zset}(1-\bar\phi(z)) \bar q_{\theta_\star,\theta}(z)\rmd z \leq\int_{\Zset}(1-\phi(z)) \bar q_{\theta_\star,\theta}(z)\rmd z \leq e^{-\frac{1}{8}\r\left(\frac{iM\epsilon}{2}\right)} \leq e^{-\frac{1}{8}\r\left(\frac{jM\epsilon}{2}\right)}.\]
This ends the proof.
\end{proof}

\medskip

\subsection{Proof Theorem \ref{thm:0}}
\label{sec:proof:thm:0}
Let $f:\;\Delta\times\rset^p\to [0,\infty)$ be some arbitrary measurable function. Take $\e\subseteq\e_0$. By the control on the normalizing constant obtained in Lemma \ref{lem:control:nc},  we have
\begin{multline*}
\textbf{1}_{\e}(z)  \int f\rmd \Pi(\cdot\vert z) \leq  \left(1 +\frac{\bar\kappa}{\rho_1}\right)^{\frac{s_\star}{2}}  \\
\times\sum_{\delta\in\Delta}\;\frac{\omega(\delta)}{\omega(\delta_{\star})}\left(\frac{\rho_1}{2\pi}\right)^{\frac{\|\delta\|_0}{2}}\textbf{1}_{\e}(z)\int_{\rset^p}f(\delta,u)\frac{e^{\ell(u;z) -\frac{\rho_1}{2}\|u\|_2^2}}{e^{\ell(\theta_\star;z)-\frac{\rho_1}{2}\|\theta_\star\|_2^2}}\mu_\delta(\rmd u).
\end{multline*}

%By the control on the normalizing constant provided by Lemma \ref{lem:control:nc},  for any $k\geq 0$,  we have
%\begin{multline*}
%\textbf{1}_{\e}(z)  \Pi\left(\|\delta\|_0> k\vert z\right)\leq  \left(1 +\frac{\bar\kappa}{\rho_1}\right)^{\frac{s_\star}{2}} \\ \sum_{\delta\in\Delta:\;\|\delta\|_0>k}\;\frac{\omega(\delta)}{\omega_{\delta_\star}}\left(\frac{\rho_1}{2\pi}\right)^{\frac{\|\delta\|_0}{2}}\textbf{1}_{\e}(z)\int_{\rset^p} \frac{e^{\ell(u;z) -\frac{\rho_1}{2}\|u\|_2^2}}{e^{\ell(\theta_\star;z)-\frac{\rho_1}{2}\|\theta_\star\|_2^2}}\mu_\delta(\rmd u).
%\end{multline*}
We write
\[\ell(u;z) - \ell(\theta_\star;z) = \L_{\theta_\star}(u;z) + \pscal{\nabla\ell(\theta_\star;z)}{u-\theta_\star}.\]
Therefore, since for $z\in\e\subseteq\e_0$, $\|\nabla\ell(\theta_\star;z)\|_\infty\leq \bar\rho/2$, it follows that for $z\in\e$
\[\ell(u;z) - \ell(\theta_\star;z) \leq \L_{\theta_\star}(u;z) + \left(1-\frac{\rho_1}{\bar\rho}\right)\pscal{\nabla\ell(\theta_\star;z)}{u-\theta_\star} + \frac{\rho_1}{2}\|u-\theta_\star\|_1.\]
We deduce from the above and Fubini's theorem that
\begin{multline}\label{control:prob:3}
\PE_\star\left[\textbf{1}_{\e}(Z) \int f\rmd\Pi(\cdot\vert Z) \right] \leq \left(1 +\frac{\bar\kappa}{\rho_1}\right)^{\frac{s_\star}{2}} \sum_{\delta\in\Delta}\;\frac{\omega(\delta)}{\omega(\delta_{\star})}\left(\frac{\rho_1}{2\pi}\right)^{\frac{\|\delta\|_0}{2}}\\\times \int_{\rset^p}f(\delta,u)e^{\frac{\rho_1}{2}\left(\|\theta_\star\|_2^2-\|u\|_2^2\right) +\frac{\rho_1}{2}\|u-\theta_\star\|_1}\PE_\star\left[\textbf{1}_{\e}(Z)e^{\L(u;Z)  + \left(1-\frac{\rho_1}{\bar\rho}\right)\pscal{\nabla\ell(\theta_\star;Z)}{u-\theta_\star}} \right]\mu_\delta(\rmd u).
\end{multline}
%\begin{multline}\label{control:prob:3}
%\PE_\star\left[\textbf{1}_{\e_0}(Z)  \Pi\left(\|\delta\|_0> k\vert Z\right)\right] \leq p^{s_\star}  \sum_{\delta\in\Delta:\;\|\delta\|_0>k}\; \frac{\omega(\delta)}{\omega_{\delta_\star}}\left(\frac{\rho_1}{2\pi}\right)^{\frac{\|\delta\|_0}{2}}\\\times \int_{\rset^p} e^{\frac{\rho_1}{2}\left(\|\theta_\star\|_2^2 - \|u\|_2^2\right) +\frac{\rho_1}{2}\|u-\theta_\star\|_1}\PE_\star\left[\textbf{1}_{\e_0}(Z)e^{\L(u;Z)  + \left(1-\frac{\rho_1}{\bar\rho}\right)\pscal{\nabla\ell(\theta_\star;Z)}{u-\theta_\star}} \right]\mu_\delta(\rmd u).
%\end{multline}

Set  $\textsf{d}(u)\eqdef -\rho_1\|u\|_1 + \rho_1\|\theta_\star\|_1 +(\rho_1/2)\|u-\theta_\star\|_1$, $u\in\rset^p$. Given (\ref{eq:curvature:cond:thm0}), we claim that 
\begin{equation}\label{thm0:claim}
e^{\textsf{d}(u)}\PE_\star\left[\textbf{1}_{\e}(Z)e^{\L(u;Z)  + \left(1-\frac{\rho_1}{\bar\rho}\right)\pscal{\nabla\ell(\theta_\star;Z)}{u-\theta_\star}} \right] \leq e^{\frac{\mathsf{a}_0}{2}} e^{-\frac{\rho_1}{4}\|u-\theta_\star\|_1} ,\;\;\;u\in\rset^p,\end{equation}
where $\mathsf{a}_0 = -\min_{x>0}[\r_0(x) -4\rho_1 s_\star^{1/2}]$. The proof of this statement is  essentially the same as in \cite{castillo:etal:14}~Theorem 1. 
%We omit the details to same space.
We give the details for completeness. Indeed,
\begin{eqnarray*}
\textsf{d}(u) &=& \frac{\rho_1}{2}\|\delta_\star\cdot(u-\theta_\star)\|_1 +\frac{\rho_1}{2}\|\delta_\star^c\cdot u\|_1 -\rho_1\|\delta_\star\cdot u\|_1 -\rho_1\|\delta_\star^c\cdot u\|_1 +\rho_1\|\theta_\star\|_1\\
& \leq & -\frac{\rho_1}{2}\|\delta_\star^c\cdot(u-\theta_\star)\|_1 + \frac{3\rho_1}{2}\|\delta_\star\cdot(u-\theta_\star)\|_1.
\end{eqnarray*}
If $\|\delta_\star^c\cdot(u-\theta_\star)\|_1 > 7\|\delta_\star\cdot(u-\theta_\star)\|_1$, we easily deduce that $\textsf{d}(u) \leq -\frac{\rho_1}{4}\|u-\theta_\star\|_1$.  This bound together with (\ref{eq:curvature:cond:thm0}) shows that the claim holds true when $\|\delta_\star^c\cdot(u-\theta_\star)\|_1 > 7\|\delta_\star\cdot(u-\theta_\star)\|_1$. If $\|\delta_\star^c\cdot(u-\theta_\star)\|_1 \leq 7\|\delta_\star\cdot(u-\theta_\star)\|_1$, then again by (\ref{eq:curvature:cond:thm0}), and the bound on $\textsf{d}(u)$ obtained above, we deduce that the logarithm of the left-hand side of (\ref{thm0:claim}) is upper bounded by
\begin{multline*}
-\frac{\rho_1}{2}\|\delta_\star^c\cdot(u-\theta_\star)\|_1 + \frac{3\rho_1}{2}\|\delta_\star\cdot(u-\theta_\star)\|_1 -\frac{1}{2}\r_0(\|\delta_\star\cdot(u-\theta_\star)\|_2) \\
\leq  -\frac{\rho_1}{2}\|u-\theta_\star\|_1 +2\rho_1 s_\star^{1/2}\|\delta_\star\cdot(u-\theta_\star)\|_2 -\frac{1}{2}\r_0(\|\delta_\star\cdot(u-\theta_\star)\|_2)\\
 \leq -\frac{\rho_1}{2}\|u-\theta_\star\|_1 -\frac{1}{2}\left[\r_0(\|\delta_\star\cdot(u-\theta_\star)\|_2) -4\rho_1 s_\star^{1/2}\|\delta_\star\cdot(u-\theta_\star)\|_2\right] \\
 \leq -\frac{\rho_1}{2}\|u-\theta_\star\|_1 + \frac{\mathsf{a_0}}{2},
\end{multline*}
which also gives  the stated claim. 
Hence (\ref{control:prob:3}) becomes
\begin{multline}\label{control:prob:4}
\PE_\star\left[\textbf{1}_{\e}(Z) \int f\rmd\Pi(\cdot\vert Z) \right]   \leq \left(1 +\frac{\bar\kappa}{\rho_1}\right)^{\frac{s_\star}{2}}e^{\frac{\mathsf{a_0}}{2}}  \sum_{\delta\in\Delta}\; \frac{\omega(\delta)}{\omega(\delta_{\star})}\left(\frac{\rho_1}{2\pi}\right)^{\frac{\|\delta\|_0}{2}}\\\times \int_{\rset^p} f(\delta,u) e^{\frac{\rho_1}{2}\left(\|\theta_\star\|_2^2 - \|u\|_2^2\right)-\rho_1\left(\|\theta_\star\|_1 - \|u\|_1\right)} e^{-\frac{\rho_1}{4}\|u-\theta_\star\|_1}\mu_\delta(\rmd u).
\end{multline}
The integral in the last display is bounded from above by
\begin{multline*}
\int_{\rset^p} f(\delta,u) e^{-\frac{\rho_1}{2}\|u-\theta_\star\|_2^2 +\rho_1\|\theta_\star\|_2\|u-\theta_\star\|_2 + \frac{3\rho_1}{4}\|u-\theta_\star\|_1}\mu_\delta(\rmd u)\\
\leq  e^{2\rho_1\|\theta_\star\|_2^2}e^{2\rho_1\|\delta\|_0} \int_{\rset^p} f(\delta,u)e^{-\frac{\rho_1}{4}\|u-\theta_\star\|_2^2}\mu_\delta(\rmd u),\end{multline*}
using some simple algebraic majoration. Then (\ref{control:prob:4}) becomes
 \begin{multline}\label{control:prob:42}
\PE_\star\left[\textbf{1}_{\e}(Z) \int f\rmd\Pi(\cdot\vert Z) \right]   \leq \left(1 +\frac{\bar\kappa}{\rho_1}\right)^{\frac{s_\star}{2}}e^{\frac{\mathsf{a_0}}{2} +2\rho_1\|\theta_\star\|_2^2} \\
\times\sum_{\delta\in\Delta}\; \frac{\omega(\delta)}{\omega(\delta_{\star})}(\sqrt{2}e^{2\rho_1})^{\|\delta\|_0} \left(\frac{\rho_1}{4\pi}\right)^{\frac{\|\delta\|_0}{2}}\int_{\rset^p} f(\delta,u)e^{-\frac{\rho_1}{4}\|u-\theta_\star\|_2^2}\mu_\delta(\rmd u).
\end{multline}

In the special case where $f(\delta,u) = \textbf{1}_{\{\|\delta\|_0\geq s_\star + k\}}$ for some $k\geq 0$, we have
\[
\PE_\star\left[\textbf{1}_{\e}(Z) \Pi(\|\delta\|_0\geq s_\star+k\vert Z)\right]  \leq \left(1 +\frac{\bar\kappa}{\rho_1}\right)^{\frac{s_\star}{2}}  e^{\frac{\mathsf{a_0}}{2} +2\rho_1\|\theta_\star\|_2^2} \sum_{\delta:\;\|\delta\|_0\geq s_\star + k} \frac{\omega(\delta)}{\omega_{\delta_\star}} \left(\sqrt{2}e^{2\rho_1}\right)^{\|\delta\|_0}.\]
By H\ref{H2}, we have
\begin{multline*}
\sum_{\delta:\;\|\delta\|_0 \geq  s_\star + k} \frac{\omega(\delta)}{\omega(\delta_{\star})} \left(\sqrt{2}e^{2\rho_1}\right)^{\|\delta\|_0}  = \sum_{j=s_\star + k }^p {p\choose j}\left(\frac{\textsf{q}}{1-\textsf{q}}\right)^{j-s_\star}\left(\sqrt{2}e^{2\rho_1}\right)^j\\
\leq {p\choose s_\star}\left(\sqrt{2}e^{2\rho_1}\right)^{s_\star} \sum_{j=s_\star+k}^p \left(\frac{\sqrt{2}e^{2\rho_1}}{p^u}\right)^{j-s_\star},
\end{multline*}
using the fact that $\frac{\textsf{q}}{1-\textsf{q}} = \frac{1}{p^{u+1}}$, and ${p\choose j}\leq p^{j-s_\star} {p \choose s_\star}$. Hence for $p^{u/2}\geq 2e^{2\rho_1}$ we get
\[\sum_{\delta:\;\|\delta\|_0 \geq  s_\star + k} \frac{\omega(\delta)}{\omega(\delta_{\star})} \left(\sqrt{2}e^{2\rho_1}\right)^{\|\delta\|_0}  \leq 2 {p\choose s_\star}\left(\sqrt{2}e^{2\rho_1}\right)^{s_\star} \frac{1}{p^{\frac{uk}{2}}} \leq 2e^{s_\star(\frac{1}{2}+2\rho_1) +s_\star\log(p)- \frac{uk}{2}\log(p)}.\]
Hence  we conclude that
\begin{multline*}
\PE_\star\left[\textbf{1}_{\e}(Z) \Pi(\|\delta\|_0\geq s_\star+k\vert Z)\right] \\
\leq 2e^{s_\star \left(\frac{1}{2} +2\rho_1 +\log(p)\right)  + \frac{s_\star}{2}\log\left(1 +\frac{\bar\kappa}{\rho_1}\right)}  e^{\frac{\mathsf{a_0}}{2} +2\rho_1\|\theta_\star\|_2^2} e^{-\frac{uk}{2}\log(p)}\\
\leq 2 e^{(1+c_0)s_\star\log(p)} e^{-\frac{uk}{2}\log(p)},\end{multline*}
using (\ref{eq:cond:thm:0}). Setting $k = (2/u)(1+c_0)s_\star + j$ for some $j\geq 1$ yields the stated result. This completes the proof.
\vspace{-0.4cm}
\begin{flushright}
$\square$
\end{flushright}

\medskip

\subsection{Proof of Theorem \ref{thm1}} \label{sec:proof:thm1}
We write $\e_1$ instead of $\e_1(\bar s)$, and take $\e\subseteq\e_1$. We note that  $\cB^c =  \{\delta\in\Delta:\;\|\delta\|_0>\bar s\} \cup \F_{1} \cup \F_{2}$, where
\[\F_{1}\eqdef \bigcup_{\delta\in \Delta_{\bar s}} \{\delta\}\times\left\{\theta\in\rset^p:\; \|\theta_\delta-\theta_\star\|_2> C\epsilon\right\},\]
\and
\begin{multline*}\;\;\; \F_{2}\eqdef \bigcup_{\delta\in\Delta_{\bar s}} \{\delta\}\times\left\{\theta\in\rset^p:\;\|\theta_\delta-\theta_\star\|_2\leq C\epsilon,\;\;\mbox{ and } \;\;\|\theta-\theta_\delta\|_2>\epsilon_1 \right\},
\end{multline*}
where $\epsilon_1=\sqrt{(1+C_1)\rho_0^{-1} p}$. Therefore  we have 
\begin{multline}\label{eq:proof:thm:contrac:eq0}
\textbf{1}_{\e}(Z)\Pi(\cB^c\vert Z)  =  \textbf{1}_{\e}(Z)\Pi(\|\delta\|_0>\bar s\vert Z) 
+ \textbf{1}_{\e}(Z)\Pi(\F_{1}\vert Z) + \textbf{1}_{\e}(Z)\Pi(\F_{2}\vert Z).\end{multline}

Let $\phi$ denote the test function asserted by Lemma \ref{test} with  $M\leftarrow C$, $\Theta_\star =\{\theta_\star\}$. We can then write 
\begin{equation}\label{eq:proof:thm:contrac:eq2}
\PE_\star\left[\textbf{1}_{\e}(Z)\Pi(\F_{1}\vert Z)\right] \leq \PE_\star\left(\phi(Z)\right)+ \PE_\star\left[\textbf{1}_{\e}(Z)\left(1-\phi(Z)\right) \Pi(\F_{1}\vert Z)\right].\end{equation}
Lemma \ref{test} gives
\begin{equation}\label{eq:proof:thm:contrac:eq3}
\PE_\star\left(\phi(Z)\right) \leq \frac{2(9p)^{\bar s}e^{-\frac{C}{8}\bar \rho_1 (s_\star + \bar s)^{1/2}\epsilon}}{1-e^{-\frac{C}{8}\bar \rho_1 (s_\star + \bar s)^{1/2}\epsilon}}\leq 4 e^{-\frac{C}{32}\bar \rho_1 (s_\star + \bar s)^{1/2}\epsilon},\end{equation}
 for $(C/16)\bar\rho(\bar s+s_\star)^{1/2} \epsilon \geq 2\bar s\log(p)$. By Lemma \ref{lem:control:nc}, we have
\begin{eqnarray*}
\textbf{1}_{\e}(Z)\Pi(\F_{1}\vert Z) & \leq & \textbf{1}_{\e}(Z)\left(1 +\frac{\bar\kappa}{\rho_1}\right)^{s_\star/2} \\
&&\times \sum_{\delta\in\Delta_{\bar s}}\frac{\omega(\delta)}{\omega(\delta_{\star})}\left(\frac{\rho_1}{2\pi}\right)^{\|\delta\|_0/2}\int_{\F_\epsilon^{(\delta)}}\frac{e^{\ell(\theta;Z) - \frac{\rho_1}{2}\|\theta\|_2^2}}{e^{\ell(\theta_\star;Z)-\frac{\rho_1}{2}\|\theta_\star\|_2^2}}\mu_\delta(\rmd \theta),
\end{eqnarray*}
where $\F_{\epsilon}^{(\delta)}\eqdef\{\theta\in\rset^p:\; \|\theta_\delta-\theta_\star\|_2> C\epsilon\}$. We use this last display together with  Fubini's theorem, to conclude that
\begin{multline}\label{eq:proof:thm:contrac:eq41}
\PE_\star\left[\textbf{1}_{\e}(Z)\left(1-\phi(Z)\right) \Pi(\F_{1}\vert Z)\right]\\  \left(1 +\frac{\bar{\kappa}}{\rho_1}\right)^{s_\star/2}\sum_{\delta\in\Delta_{\bar s}}\frac{\omega(\delta)}{\omega(\delta_{\star})}\left(\frac{\rho_1}{2\pi}\right)^{\|\delta\|_0/2} \\
 \times\int_{\F_\epsilon^{(\delta)}} \PE_\star\left[(1-\phi(Z))\frac{e^{\ell(\theta;Z) }}{e^{\ell(\theta_\star;Z)}}\textbf{1}_{\e}(Z)\right] \frac{e^{-\frac{\rho_1}{2}\|\theta\|_2^2}}{e^{-\frac{\rho_1}{2}\|\theta_\star\|_2^2}} \mu_\delta(\rmd \theta).
\end{multline}
We write  $\F_\epsilon^{(\delta)} = \cup_{j\geq 1} \F^{(\delta)}_{j,\epsilon}$, where $\F_{j,\epsilon}^{(\delta)}\eqdef\{\theta\in\rset^p:\; jC\epsilon < \|\theta_\delta-\theta_\star\|_2\leq (j+1)C\epsilon\}$. Using this and Lemma \ref{test}, we have
\begin{multline}\label{eq:proof:thm:contrac:eq411}
\int_{\F_{j,\epsilon}^{(\delta)}}\PE_\star\left[(1-\phi(Z))\frac{e^{\ell(\theta;Z) }}{e^{\ell(\theta_\star;Z)}}\textbf{1}_{\e}(Z)\right] \frac{e^{-\frac{\rho_1}{2}\|\theta\|_2^2}}{e^{-\frac{\rho_1}{2}\|\theta_\star\|_2^2}} \mu_\delta(\rmd \theta)\\
\leq e^{-\frac{1}{8}\r\left(\frac{jC\epsilon}{2}\right)}\int_{\F_{j,\epsilon}^{(\delta)}}\frac{e^{-\frac{\rho_1}{2}\|\theta\|_2^2}}{e^{-\frac{\rho_1}{2}\|\theta_\star\|_2^2}}\mu_\delta(\rmd \theta).\end{multline}
%We note that $ \rho_1\|\theta_\star\|_1 - \rho_1\|u\|_1 \leq \rho_1\|u-\theta_\star\|_1 \leq 2\rho(s_\star+\bar s)^{1/2}(j+1)C\epsilon - \rho_1\|u-\theta_\star\|_1$, for $u\in\rset^p_\delta\cap \F_{j,\epsilon}^{(\delta)}$. And since $\int_{\rset^p}e^{-\rho_1\|u-\theta_\star\|_1}\mu_\delta(\rmd u) \leq \left(\frac{2}{\rho_1}\right)^{\|\delta\|_0}$, we deduce that the right-hand size of (\ref{eq:proof:thm:contrac:eq411}) is upper-bounded by 
%\[e^{-\frac{1}{8}\r\left(\frac{jC\epsilon}{2}\right)} e^{8\rho (\bar s+s_\star)^{1/2}\left(\frac{jC\epsilon}{2}\right)}\left(\frac{ c}{\rho_1}\right)^{\|\delta\|_0} \leq e^{-\frac{1}{16}\r\left(\frac{jC\epsilon}{2}\right)}\left(\frac{2}{\rho_1}\right)^{\|\delta\|_0},\] 
%where the second inequality uses the definition of $\epsilon$, and the assumption that $64\rho\leq \bar\rho$.  
We note that  $\rho_1\|\theta_\star\|_2^2 - \rho_1\|\theta\|_2^2  =-\rho_1\|\theta-\theta_\star\|_2^2 -2\rho_1\pscal{\theta_\star}{\theta-\theta_\star} \leq -\rho_1\|\theta-\theta_\star\|_2^2 + 2\rho_1\|\theta_\star\|_\infty\| \theta-\theta_\star\|_1$. Therefore, for $\theta\in\rset^p_\delta\cap \F_{j,\epsilon}^{(\delta)}$, $\rho_1\|\theta_\star\|_2^2 - \rho_1\|\theta\|_2^2   \leq -\rho_1\|\theta-\theta_\star\|_2^2  +2\rho_1 \|\theta_\star\|_\infty (\bar s+ s_\star)^{1/2}(j+1)C\epsilon$.  We deduce that  the right-hand size of (\ref{eq:proof:thm:contrac:eq411}) is upper-bounded by 
\[e^{-\frac{1}{8}\r\left(\frac{jC\epsilon}{2}\right)} e^{4\rho_1\|\theta_\star\|_\infty (\bar s+s_\star)^{1/2}\left(\frac{jC\epsilon}{2}\right)}\left(\frac{ 2\pi}{\rho_1}\right)^{\|\delta\|_0/2} \leq e^{-\frac{1}{16}\r\left(\frac{jC\epsilon}{2}\right)}\left(\frac{ 2\pi}{\rho_1}\right)^{\|\delta\|_0/2},\] 
using the condition $\bar\rho \geq 32\rho\|\theta_\star\|_\infty$. Combined with  (\ref{eq:proof:thm:contrac:eq411}) and (\ref{eq:proof:thm:contrac:eq41}) the last inequality implies that 
\begin{multline}\label{eq:proof:thm:contrac:eq42}
\PE_\star\left[\textbf{1}_{\e}(Z)\left(1-\phi(Z)\right) \Pi(\F_{1}\vert Z)\right] \leq   \left(1 +\frac{\bar{\kappa}}{\rho_1}\right)^{s_\star/2}\left(\sum_{\delta\in\Delta_{\bar s}}\frac{\omega(\delta)}{\omega(\delta_{\star})}\right)\sum_{j\geq 1}e^{-\frac{1}{16}\r\left(\frac{jC\epsilon}{2}\right)}\\
\leq \left(1 +\frac{\bar{\kappa}}{\rho_1}\right)^{s_\star/2}\left(\sum_{\delta\in\Delta_{\bar s}}\frac{\omega(\delta)}{\omega(\delta_{\star})}\right)\frac{e^{-\frac{C}{16}\bar \rho_1 (s_\star + \bar s)^{1/2}\epsilon}}{1-e^{-\frac{C}{16}\bar \rho_1 (s_\star + \bar s)^{1/2}\epsilon}}.
\end{multline}
We note ${p\choose s}\leq p^s$, so that
\begin{multline*}
\sum_{\delta\in\Delta_{\bar s}}\frac{\omega(\delta)}{\omega(\delta_{\star})} = \left(\frac{1-\q}{\q}\right)^{s_\star}\sum_{\delta\in\Delta_{\bar s}}\left(\frac{\q}{1-\q}\right)^{\|\delta\|_0} = p^{s_\star(1+u)}\sum_{s=0}^{\bar s}{p\choose s}\left(\frac{1}{p^{1+u}}\right)^{s}  \leq 2p^{s_\star(1+u)},
\end{multline*} 
provided that $p^u\geq 2$. It follows that 
\begin{multline}\label{eq:proof:thm:contrac:eq4}
\PE_\star\left[\textbf{1}_{\e}(Z)(1-\phi(Z))\Pi( \F_1\vert Z)\right]\\ \leq  2p^{s_\star(1+u)}e^{\frac{s_\star}{2}\log\left(1+\frac{\bar\kappa}{\rho_1}\right)}
  \frac{e^{-\frac{C}{16}\bar \rho_1 (s_\star + \bar s)^{1/2}\epsilon}}{1-e^{-\frac{C}{16}\bar \rho_1 (s_\star + \bar s)^{1/2}\epsilon}}  \leq 4 e^{-\frac{C}{32}\bar \rho_1 (s_\star + \bar s)^{1/2}\epsilon},
\end{multline}
provided that $(C/32)\bar\rho(s_\star +\bar s)^{1/2}\epsilon \geq s_\star(1+u)\log\left(p+\frac{p\bar\kappa}{\rho_1}\right)$.

Let $\F_{2}^{(\delta)}\eqdef\{\theta\in\rset^p:\;\|\theta_\delta-\theta_\star\|_2\leq C\epsilon,\;\mbox{ and }\|\theta-\theta_\delta\|_2>\epsilon_1\}$, so that
\[
\textbf{1}_{\e}(Z)\Pi(\F_{2}\vert Z) = \textbf{1}_{\e}(Z)\sum_{\delta\in\Delta_{\bar s}} \Pi(\delta\vert Z) \Pi(\F_{2}^{(\delta)}\vert \delta,Z),\]
and $\Pi(\F_{2}^{(\delta)}\vert \delta,Z) \leq  \PP[\|V_\delta\|_2>\epsilon_1]$, 
where $V_\delta =(V_1,\ldots,V_{p-\|\delta\|_0}) \stackrel{i.i.d.}{\sim} \textbf{N}(0,\rho_0^{-1})$. By Gaussian tails bounds  we get $\Pi(\F_{2}^{(\delta)}\vert \delta,Z) \leq 2e^{-p}$, for any constant $C_1\geq 3$. We conclude that
\begin{equation}\label{eq:proof:thm:contrac:eq1}
\textbf{1}_{\e}(Z)\Pi(\F_{2}\vert Z) \leq  \frac{1}{p^{\bar s}},
\end{equation}
for all $p$ large enough. The theorem follows by collecting the bounds (\ref{eq:proof:thm:contrac:eq1}), (\ref{eq:proof:thm:contrac:eq4}), (\ref{eq:proof:thm:contrac:eq3}), (\ref{eq:proof:thm:contrac:eq2}), and (\ref{eq:proof:thm:contrac:eq0}).
\begin{flushright}
\vspace{-0.4cm}
$\square$
\end{flushright}

\medskip

\subsection{Proof of Theorem \ref{thm:sel}}\label{sec:proof:thm:sel}
We write $\e_1$ (resp. $\e_2$) instead of $\e_1(\bar s)$ (resp. $\e_2(\bar s)$), and we fix $\e\subseteq\e_2$. First we derive a contraction rate for the frequentist estimator $\hat\theta_\delta$. To that end we note that for  $\delta\in\A_{\bar s}$, and $z\in\e_{0}$,  $\|\nabla\ell^{[\delta]}([\theta_\star]_\delta;z)\|_\infty \leq \bar\rho/2$. Furthermore, the curvature assumption on $\ell$ in $\e_{1}$ implies that
\[0\geq -\ell^{([\delta]}(\hat\theta_\delta;z)  + \ell^{([\delta]}([\theta_\star]_\delta;z) \geq \pscal{-\nabla\ell^{[\delta]}([\theta_\star]_\delta;z)}{\hat\theta_\delta - [\theta_\star]_\delta} + \frac{1}{2}\r(\|\hat\theta_\delta - [\theta_\star]_\delta\|_2).\]
Using this and the definition of $\epsilon$, it follows that for  $\delta\in\A_{\bar s}$,
\begin{equation}\label{eq:bound:freq:est}
\textbf{1}_{\e_{1}}(z) \|\hat\theta_\delta -[\theta_\star]_\delta\|_2\leq  \epsilon.\end{equation}
Set $\A_+ \eqdef \A_{\bar s}\setminus\A_{s_\star +j}$, and recall that $\cB_j = \cup_{\delta\in\A_{s_\star +j}}\{\delta\}\times \cB^{(\delta)}$. Therefore we have
\[\Pi(\cB_j\vert z)  +\Pi\left(\cup_{\delta\in\A_+}\{\delta\}\times \cB^{(\delta)}\vert z\right) + \Pi(\cB^c\vert z) = 1,\]
so that
\begin{equation}\label{proof:mod:sel:eq0}
 \textbf{1}_{\e}(z)\left(1-\Pi(\cB_j\vert z) \right)  =    \textbf{1}_{\e}(z) \Pi(\cB^c\vert z) + \textbf{1}_{\e}(z)\Pi\left(\cup_{\delta\in\A_+}\{\delta\}\times \cB^{(\delta)}\vert z\right).
\end{equation}
Hence it remains only to upper bound the last term on the right-hand side of the last display. By definition we have
\[\Pi\left(\cup_{\delta\in\A_+}\{\delta\}\times \cB^{(\delta)}\vert z\right) =  \Pi(\delta_\star\times \cB^{(\delta_\star)}\vert z)  \sum_{\delta\in\A_+} \frac{ \Pi(\delta\times \cB^{(\delta)}\vert z) }{ \Pi(\delta_\star\times \cB^{(\delta_\star)}\vert z) },\]
and
\begin{equation}\label{proof:mod:sel:eq1}
\frac{ \Pi(\delta\times \cB^{(\delta)}\vert z) }{ \Pi(\delta_\star\times \cB^{(\delta_\star)}\vert z)} = \frac{\omega(\delta)}{\omega(\delta_{\star})} \left(\frac{\rho_1}{\rho_0}\right)^{\frac{\|\delta\|_0-s_\star}{2}}\frac{\int_{\cB^{(\delta)}} e^{\ell(\theta_\delta;z) -\frac{\rho_1}{2}\|\theta_\delta\|_2^2 -\frac{\rho_0}{2}\|\theta-\theta_\delta\|_2^2}\rmd\theta}{\int_{\cB^{(\delta_\star)}} e^{\ell(\theta_{\delta_\star};z) -\frac{\rho_1}{2}\|\theta_{\delta_\star}\|_2^2 -\frac{\rho_0}{2}\|\theta-\theta_{\delta_\star}\|_2^2}\rmd\theta}.\end{equation}
%\begin{multline}
%\Pi\left(\cup_{\delta\in\A_0}\{\delta\}\times \cB^{(\delta)}\vert z\right) \\
%=\frac{\sum_{\delta\in\A_0}\omega(\delta) \left(\frac{\rho_1}{2\pi}\right)^{\|\delta\|_0/2}\left(\frac{1}{2\pi\gamma}\right)^{\frac{p-\|\delta\|_0}{2}}\int_{\cB^{(\delta)}} e^{\ell(\theta_\delta;z) -\frac{\rho_1}{2}\|\theta_\delta\|_2^2 -\frac{1}{2\gamma}\|\theta-\theta_\delta\|_2^2}\rmd\theta}{\sum_{\delta\in\Delta}\omega(\delta) \left(\frac{\rho_1}{2\pi}\right)^{\|\delta\|_0/2}\left(\frac{1}{2\pi\gamma}\right)^{\frac{p-\|\delta\|_0}{2}}\int_{\rset^p} e^{\ell(\theta_\delta;z) -\frac{\rho_1}{2}\|\theta_\delta\|_2^2 -\frac{1}{2\gamma}\|\theta-\theta_\delta\|_2^2}\rmd\theta}\\
%= \Pi(\delta_\star\times \cB^{(\delta_\star)}\vert z) \sum_{\delta\in\A_0} \frac{\omega(\delta)}{\omega_{\delta_\star}} \left(\gamma\rho\right)^{\frac{\|\delta\|_0-s_\star}{2}}\frac{\int_{\cB^{(\delta)}} e^{\ell(\theta_\delta;z) -\frac{\rho_1}{2}\|\theta_\delta\|_2^2 -\frac{1}{2\gamma}\|\theta-\theta_\delta\|_2^2}\rmd\theta}{\int_{\cB^{(\delta_\star)}} e^{\ell(\theta_{\delta_\star};z) -\frac{\rho_1}{2}\|\theta_{\delta_\star}\|_2^2 -\frac{1}{2\gamma}\|\theta-\theta_{\delta_\star}\|_2^2}\rmd\theta}.
%\end{multline}
By integrating out the non-selected components ($\theta-\theta_\delta$), we note that the integral in the numerator of the last display is bounded from above by
\[ (2\pi\rho_0^{-1})^{(p-\|\delta\|_0)/2} \int_{\{\theta\in\rset^p:\;\|\theta-\theta_\star\|_2\leq C\epsilon\}}e^{\ell(\theta;z) -\frac{\rho_1}{2}\|\theta\|_2^2}\mu_\delta(\rmd\theta),\]
whereas the integral in the denominator is lower bounded by
\begin{multline*}
(2\pi\rho_0^{-1})^{(p-s_\star)/2} \PP\left(\sqrt{\rho_0^{-1}}\|V\|_2  \leq C_1\epsilon_1\right) \int_{\{\theta\in\rset^p:\;\|\theta-\theta_\star\|_2 \leq C\epsilon\}}e^{\ell(\theta;z) -\frac{\rho_1}{2}\|\theta\|_2^2}\mu_{\delta_\star}(\rmd\theta)\\
\geq \frac{1}{2}(2\pi\rho_0^{-1})^{(p-s_\star)/2} \int_{\{\theta\in\rset^p:\;\|\theta-\theta_\star\|_2\leq C\epsilon\}}e^{\ell(\theta;z) -\frac{\rho_1}{2}\|\theta\|_2^2}\mu_{\delta_\star}(\rmd\theta),\end{multline*}
where $V = (V_1,\ldots,V_{p-s_\star})$ is a random vector with  i.i.d. standard normal components. These observations together with (\ref{proof:mod:sel:eq1}) lead to
\begin{equation*}
\frac{ \Pi(\delta\times \cB^{(\delta)}\vert z) }{ \Pi(\delta_\star\times \cB^{(\delta_\star)}\vert z)} \leq 
\frac{2\omega(\delta)}{\omega(\delta_{\star})} \left(\frac{\rho_1}{2\pi}\right)^{\frac{\|\delta\|_0-s_\star}{2}}\frac{\int_{\{\theta\in\rset^p:\;\|\theta-\theta_\star\|_2\leq C\epsilon\}}e^{\ell(\theta;z) -\frac{\rho_1}{2}\|\theta\|_2^2}\mu_\delta(\rmd\theta)}{\int_{\{\theta\in\rset^p:\;\|\theta-\theta_\star\|_2\leq C\epsilon\}}e^{\ell(\theta;z) -\frac{\rho_1}{2}\|\theta\|_2^2}\mu_{\delta_\star}(\rmd\theta)}.
\end{equation*}
For $\theta\in\rset^p_\delta$, $\delta\in\A_{\bar s}$, and $\|\theta-\theta_\star\|_2\leq C\epsilon$, it is easily checked that
\[-C\|\theta_\star\|_\infty\rho_1\bar s^{1/2}\epsilon \leq  \frac{\rho_1}{2}\left(\|\theta_\star\|_2^2 - \|\theta\|_2^2\right) \leq C\|\theta_\star\|_\infty\rho_1\bar s^{1/2}\epsilon,\]
and by the definition of $\varpi$, and noting from (\ref{eq:bound:freq:est})  that $\|[\theta]_\delta-\hat\theta_\delta\|_2\leq \|[\theta]_\delta-[\theta_\star]_\delta\|_2 + \|\hat\theta_\delta - [\theta_\star]_\delta\|_2 \leq (C+1)\epsilon$, we have
\begin{multline*}
\left|\ell^{[\delta]}(\theta;z) - \ell^{[\delta]}(\hat\theta_\delta;z) -\underbrace{\pscal{\nabla\ell^{[\delta]}(\hat\theta_\delta;z)}{[\theta]_\delta-\hat\theta_\delta}}_{=0} +\frac{1}{2}([\theta]_\delta-\hat\theta_\delta)'\mathcal{I}_\delta([\theta]_\delta-\hat\theta_\delta)\right| \\
\leq \frac{\varpi(\delta,(C+1)\epsilon;z)}{6}\bar s^{3/2}\|[\theta]_\delta-\hat\theta_\delta\|_2^3\leq \bar s^{3/2} \frac{\mathsf{a}_2}{6}((C+1)\epsilon)^3.\end{multline*}
We conclude that
\begin{multline*}
\frac{ \Pi(\delta\times \cB^{(\delta)}\vert z) }{ \Pi(\delta_\star\times \cB^{(\delta_\star)}\vert z)} \leq 2e^{C_0(\rho_1\|\theta_\star\|_\infty\bar s^{1/2}\epsilon + \mathsf{a}_2\bar s^{3/2}\epsilon^3)} \\
\times \frac{\omega(\delta)}{\omega(\delta_{\star})} \left(\frac{\rho_1}{2\pi}\right)^{\frac{\|\delta\|_0-s_\star}{2}}\frac{e^{\ell^{[\delta]}(\hat\theta_\delta;z)}}{e^{\ell^{[\delta_\star]}(\hat\theta_{\delta_\star};z)}}  \frac{\sqrt{\det\left(2\pi \mathcal{I}_\delta^{-1}\right)}}{\sqrt{\det\left(2\pi \mathcal{I}_{\delta_\star}^{-1}\right)} \mathbf{N}(\hat\theta_{\delta_\star};\mathcal{I}_{\delta_\star}^{-1})(\cB_{\delta_\star})},
\end{multline*}
for some absolute constant $C_0$, where $\cB_\delta =\{u\in\rset^{\|\delta\|}:\; \|u-[\theta_\star]_\delta\|_2\leq C\epsilon\}$, and $\mathbf{N}(\hat\theta_{\delta};\mathcal{I}^{-1}_{\delta})(A)$ denotes the probability of  $A$ under the Gaussian distribution  $\mathbf{N}(\hat\theta_{\delta};\mathcal{I}^{-1}_{\delta})$. For $z\in\e_1$, using the assumption $(C-1)\epsilon \underline{\kappa}^{1/2} \geq 2(s_\star^{1/2}+1)$, and for $z\in\e_1$, we have $\mathbf{N}(\hat\theta_{\delta_\star};\mathcal{I}_{\delta_\star}^{-1})(\cB_{\delta_\star})\geq 1/2$. We conclude that
\begin{multline}\label{proof:mod:sel:eq2}
\textbf{1}_{\e_{1}}(z)\frac{ \Pi(\delta\times \cB^{(\delta)}\vert z) }{ \Pi(\delta_\star\times \cB^{(\delta_\star)}\vert z)} \leq  4e^{C_0(\rho_1\|\theta_\star\|_\infty\bar s^{1/2}\epsilon + \mathsf{a}_2\bar s^{3/2} \epsilon^3)} \frac{\omega(\delta)}{\omega(\delta_{\star})} \left(\rho_1\right)^{\frac{\|\delta\|_0-s_\star}{2}}\frac{e^{\ell(\hat\theta_\delta;z)}}{e^{\ell(\hat\theta_{\delta_\star};z)}} \sqrt{\frac{\det(\mathcal{I}_{\delta_\star})}{\det(\mathcal{I}_\delta)}}. \end{multline}
%
%\begin{multline}\label{proof:mod:sel:eq2}
%\textbf{1}_{\e_{1}}(z)\frac{ \Pi(\delta\times \cB^{(\delta)}\vert z) }{ \Pi(\delta_\star\times \cB^{(\delta_\star)}\vert z)} \\
%\leq  4e^{C_0(\rho_1\|\theta_\star\|_\infty\bar s^{1/2}\epsilon + \mathsf{a}_2\bar s^{3/2} \epsilon^3)} \frac{\omega(\delta)}{\omega_{\delta_\star}}\frac{e^{\ell(\hat\theta_\delta;z)}}{e^{\ell(\hat\theta_{\delta_\star};z)}} \sqrt{\frac{\det(\mathcal{I}_{\delta_\star})}{\det(\mathcal{I}_\delta)}}\\
%=4e^{C_0(\rho_1\|\theta_\star\|_\infty\bar s^{1/2}\epsilon + \mathsf{a}_2\epsilon^3)}  \sum_{j=k+1}^{\bar s-s_\star}\; \sum_{\delta\supseteq\delta_\star,\;\|\delta\|_0 = s_\star +j} \left(\frac{\mathsf{q}}{1-\mathsf{q}}\right)^{j}\frac{e^{\ell(\hat\theta_\delta;z)}}{e^{\ell(\hat\theta_{\delta_\star};z)}} \sqrt{\frac{\det(\mathcal{I}_{\delta_\star})}{\det(\mathcal{I}_\delta)}}. \end{multline}

For $z\in\e_{2}$, and $\|\delta\|_0=s_\star + j$,  we have
\[\ell(\hat\theta_\delta;z) - \ell(\hat\theta_{\delta_\star};z) \leq \frac{j u}{2}\log(p).\]
Recall that $\mathcal{I}_\delta = -\nabla^{(2)}\ell^{[\delta]}(\hat\theta_\delta;z)$. Hence we can write
 \[\frac{\det(\mathcal{I}_{\delta_\star})}{\det(\mathcal{I}_\delta)}  = \frac{\det\left(-\nabla^{(2)}\ell^{[\delta_\star]}(\hat\theta_{\delta_\star};z)\right)}{\det\left(-\nabla^{(2)}\ell^{[\delta]}(\hat\theta_{\delta_\star};z)\right)}\times \frac{\det\left(-\nabla^{(2)}\ell^{[\delta]}(\hat\theta_{\delta_\star};z)\right)}{\det\left(-\nabla^{(2)}\ell^{[\delta]}(\hat\theta_\delta;z)\right)}.\]
 The Cauchy interlacing property (Lemma \ref{lem:interlacing}) implies that the first term on the right hand side of the  last display is upper bounded by $(1/\underline{\kappa})^j$. To bound the second term,  we first note that by convexity of the function $-\log\det$, for any pair of symmetric positive definite matrices $A,B$ of same size, it holds $|\log\det(A) - \log\det(B)| \leq \max(\|A^{-1}\|_{\textsf{F}},\|B^{-1}\|_{\textsf{F}})\|A-B\|_{\textsf{F}}$, where $\|M\|_{\textsf{F}}$ denotes the Frobenius norm of $M$.  Hence, if a symmetric positive definite matrix $A(\theta)$ depends smoothly on a parameter $\theta$, then we have $|\log\det(A(\theta)) -\log\det(A(\theta_0))| \leq \sup_{u\in\Theta} \|A(u)^{-1}\|_{\textsf{F}} \;\|\nabla A(\bar\theta)\cdot(\theta-\theta_0)\|_{\textsf{F}}$, for some $\bar\theta$ on the segment between $\theta$ and $\theta_0$. We use this together with the definition of $\mathsf{a}_2$,  to conclude that the second term on the right hand of the last equation is upper bounded by $e^{\frac{2\mathsf{a}_2\bar s^{3}\epsilon}{\underline{\kappa}}}$. Hence
 \[\frac{\det(\mathcal{I}_{\delta_\star})}{\det(\mathcal{I}_\delta)}   \leq  \left(\frac{1}{\underline{\kappa}}\right)^j e^{\frac{2\mathsf{a}_2\bar s^{3}\epsilon}{\underline{\kappa}}}.\]
Using these bounds,  we obtain from (\ref{proof:mod:sel:eq2}),
\begin{equation}\label{proof:mod:sel:eq3}
\textbf{1}_{\e}(z)\frac{ \Pi(\delta\times \cB^{(\delta)}\vert z) }{ \Pi(\delta_\star\times \cB^{(\delta_\star)}\vert z)} \leq  4e^{C_0(\rho_1\|\theta_\star\|_\infty\bar s^{1/2}\epsilon + \mathsf{a}_2\bar s^{3/2} (\epsilon^3+ \frac{\bar s^{1/2}\epsilon}{\underline{\kappa}}))}\left(\sqrt{\frac{\rho_1}{\underline{\kappa}}}\frac{1}{p^{1 +\frac{u}{2}}}\right)^j. \end{equation}
Using (\ref{proof:mod:sel:eq3}) and summing over $\delta\in\A_+$, it follows that 
\begin{multline*}
\textbf{1}_{\e}(z)\Pi\left(\cup_{\delta\in\A_+}\{\delta\}\times \cB^{(\delta)}\vert z\right) \\
\leq  4e^{C_0(\rho_1\|\theta_\star\|_\infty\bar s^{1/2}\epsilon + \mathsf{a}_2\bar s^{3/2} (\epsilon^3+ \frac{\bar s^{1/2}\epsilon}{\underline{\kappa}}))} \sum_{j=k+1}^{\bar s-s_\star} \;\;\sum_{\delta\supseteq\delta_\star,\;\|\delta\|_0 = s_\star +j} \left(\sqrt{\frac{\rho_1}{\underline{\kappa}}}\frac{1}{p^{1 +\frac{u}{2}}}\right)^j,\\
 \leq  8e^{C_0(\rho_1\|\theta_\star\|_\infty\bar s^{1/2}\epsilon + \mathsf{a}_2\bar s^{3/2} (\epsilon^3+ \frac{\bar s^{1/2}\epsilon}{\underline{\kappa}}))}  \left(\sqrt{\frac{\rho_1}{\underline{\kappa}}}\frac{1}{p^{\frac{u}{2}}}\right)^{k+1},
\end{multline*}
provided  that $p^{u/2} \sqrt{\underline{\kappa}/\rho_1}\geq 2$.  This bound and (\ref{proof:mod:sel:eq0}) yields the stated bound.

\begin{remark}
By tracing the steps in the proof of (\ref{proof:mod:sel:eq3}), it can be checked that the following lower bound also holds.
\begin{equation}\label{proof:mod:sel:eq4}
\textbf{1}_{\e_1}(z)\frac{ \Pi(\delta\times \cB^{(\delta)}\vert z) }{ \Pi(\delta_\star\times \cB^{(\delta_\star)}\vert z)} \geq  \frac{1}{4}e^{-C_0(\rho_1\|\theta_\star\|_\infty\bar s^{1/2}\epsilon + \mathsf{a}_2\bar s^{3/2} (\epsilon^3+ \frac{\bar s^{1/2}\epsilon}{\underline{\kappa}}))}\left(\sqrt{\frac{\rho_1}{\bar \kappa}}\frac{1}{p^{u+1}}\right)^j. \end{equation}
\end{remark}
\vspace{-0.6cm}
\begin{flushright}
$\square$
\end{flushright}

\medskip

\subsection{Proof of Theorem \ref{thm:KL}}\label{sec:proof:thm:KL}
We start with the following  general observation.  Let $\pi$, $q$, and $\mu$ be three probability measures on some measurable space such that $\mu(\rmd x) = \frac{e^{f(x)} \pi(\rmd x)\textbf{1}_A(x)}{\int_A e^{f(u)}\pi(\rmd u)}$ for some measurable $\rset$-valued function $f$, and a measurable set $A$ such that $\pi(A)\geq 1/2$. Furthermore, suppose that the support of $q$ is $A$.
Then
\[\int \log\left(\frac{\rmd\mu}{\rmd\pi}\right)\rmd q =\int_A f\rmd q - \log\left(\int_A e^f\rmd\pi\right).\]
By Jensen's inequality we have
\[-\log\left(\int_A e^{f}\rmd\pi\right) \leq -\log(\pi(A)) -\int_A f \frac{\rmd\pi}{\pi(A)}.\]
Since $-\log(1-x)\leq 2x$ for $x\in [0,1/2]$, we have $-\log(\pi(A)) \leq 2\pi(A^c)$, and we conclude that
\begin{eqnarray}\label{bound:KL}
\int \log\left(\frac{\rmd\mu}{\rmd\pi}\right)\rmd q & \leq & \left|\int_A f\rmd q - \int_A f\rmd \pi\right|  +  2\pi(A^c)\left(1+ \int_A |f|\rmd \pi \right)\nonumber\\
& \leq & \int_A|f|\rmd q + 2\int_A|f|\rmd\pi + 2\pi(A^c).\end{eqnarray}
When $q=\mu$, (\ref{bound:KL}) writes
\begin{equation}\label{bound:KL:2}
\KL{\mu}{\pi} \leq  \int_A|f|\rmd \mu + 2\int_A|f|\rmd\pi + 2\pi(A^c).\end{equation}
Let us now apply (\ref{bound:KL}) and (\ref{bound:KL:2}). Fix  $z\in\e$. In order to use these bounds, we first note that the density of $\Pi_\star^{(\infty)}$   with respect to $\Pi$ that can be written as
\begin{equation}\label{eq:density:bvm}
\frac{\rmd\Pi^{(\infty)}_\star}{\rmd\Pi}(\delta,\theta\vert z) = \frac{e^{-R(\delta,\theta;z)}\textbf{1}_{ \{\delta_\star\}\times \rset^p}(\delta,\theta)}{\int_{ \{\delta_\star\}\times \rset^p} e^{-R(\delta,\theta;z)}\Pi(\rmd\delta,\rmd\theta\vert z)},\end{equation}
 where
 \begin{eqnarray*}
 R(\delta,\theta;z) &  \eqdef &  \ell(\theta_\delta;z)-\frac{\rho_1}{2}\|\theta_\delta\|_2^2 - \ell(\hat\theta_\delta;z)+ \frac{\rho_1}{2}\|\hat\theta_\delta\|_2^2 + \frac{1}{2}([\theta]_\delta-\hat\theta_\delta)'\mathcal{I}_{\delta}([\theta]_\delta-\hat\theta_\delta),\\
 & = & -\frac{\rho_1}{2}\|\theta_\delta\|_2^2 +  \frac{\rho_1}{2}\|\hat\theta_\delta\|_2^2 + \frac{1}{6}\nabla^{(3)}\ell^{[\delta]}(\bar \theta_\delta;z)\cdot\left([\theta]_\delta-\hat\theta_\delta,[\theta]_\delta-\hat\theta_\delta,[\theta]_\delta-\hat\theta_\delta\right),\end{eqnarray*}
 for some element $\bar\theta_\delta$ on the segment between $[\theta]_\delta$ and $\hat\theta_\delta$. The second equality follows from Taylor expansion and $\nabla\ell^{[\delta]}(\hat\theta_\delta;z)=0$. That second expression of $R$ shows that for $z\in\e$, $\delta\in\A_{\bar s}$, and $\theta\in\cB^{(\delta)}$, 
 \begin{equation}\label{bound:R1}
 \left| R(\delta,\theta)\right| \leq C_0\rho_1 \bar s^{1/2} \epsilon + C_0 \mathsf{a}_2\bar s^{3/2}\epsilon^3,\end{equation}
for some absolute constant $C_0$.  However, in general when $\theta\notin\cB^{(\delta)}$, $R(\delta,\theta)$ is quadratic in $\theta$ under the assumptions of the theorem. Indeed, using $\nabla\ell^{[\delta]}(\hat\theta_\delta;z)=0$,  we can write that $\ell(\theta_\delta;z) - \ell^{[\delta]}(\hat\theta_\delta;z) = -(1/2)([\theta]_\delta-\hat\theta_\delta)' [-\nabla^{(2)} \ell^{[\delta]} (\bar \theta_\delta;z)]([\theta]_\delta-\hat\theta_\delta)$, for some element $\bar\theta_\delta$ on the segment between $[\theta]_\delta$ and $\hat\theta_\delta$. Hence, for $\theta\in\rset^p$
\begin{multline}\label{bound:R2}
|R(\delta,\theta)| \leq \frac{\rho_1}{2}\left| \|\theta_\delta\|_2^2 -\|\hat\theta_\delta\|_2^2\right| \\
+\frac{1}{2}\left|([\theta]_\delta-\hat\theta_\delta)' [-\nabla^{(2)} \ell^{[\delta]} (\bar \theta_\delta;z)([\theta]_\delta-\hat\theta_\delta) -  ([\theta]_\delta-\hat\theta_\delta)'\mathcal{I}_{\delta}([\theta]_\delta-\hat\theta_\delta)\right| \\
\leq \frac{\rho_1 +\bar \kappa}{2}\|[\theta]_\delta-\hat\theta_\delta\|_2^2 +\rho_1 \|\hat\theta_\delta\|_2\|[\theta]_\delta- \hat\theta_\delta\|_2 \\
\leq (\rho_1 +\bar \kappa) \|[\theta]_\delta-\hat\theta_\delta\|_2^2 + \frac{\rho_1^2(\epsilon + \|\theta_\star\|_2)^2}{2(\rho_1+\bar \kappa)},\end{multline}
where the second inequality uses (\ref{def:kappa:bar:bis}), and the third inequality follows from some basic algebra, and (\ref{eq:bound:freq:est}). 

Let  $R$ be some arbitrary probability measure on $\Delta\times \rset^p$ with support $\{\delta_\star\}\times\rset^p$.  We make use of (\ref{bound:KL}) with $q=R$, $\mu=\Pi_\star^{(\infty)}$, $\pi=\Pi$, and $A = \{\delta_\star\}\times \rset^p$.  We then split the integrals over $\{\delta_\star\}\times\rset^p$ into $\{\delta_\star\}\times\cB^{(\delta_\star)}$ and $\{\delta_\star\}\times(\rset^p\setminus \cB^{(\delta_\star)})$, together with (\ref{bound:R1}) and (\ref{bound:R2}) to get
\begin{multline}\label{bound:KL:eq1}
\textbf{1}_{\e}(z) \int \log\left(\frac{\rmd \Pi_\star^{(\infty)}}{\rmd\Pi}\right)\rmd R \leq  2\textbf{1}_{\e}(z)\left(1-\Pi(\delta_\star \vert z)\right)\\
+ C_0 \left(\rho_1\bar s^{1/2}\epsilon + \mathsf{a}_2\bar s^{3/2}\epsilon^3\right)+\frac{3\rho_1^2(\epsilon + \|\theta_\star\|_2)^2}{2(\rho_1+\bar \kappa)}\\
 + (\rho_1+\bar\kappa)\textbf{1}_{\e}(z)\int_{ \{\delta_\star\}\times \rset^p\setminus\cB^{(\delta_\star)}} \|[\theta]_\delta - \hat\theta_\delta\|_2^2R(\rmd\delta,\rmd\theta)\\
+ 2(\rho_1+\bar\kappa) \textbf{1}_{\e}(z)\int_{ \{\delta_\star\}\times \rset^p\setminus\cB^{(\delta_\star)}} \|[\theta]_\delta - \hat\theta_\delta\|_2^2\Pi(\rmd\delta,\rmd\theta\vert Z).\end{multline}
By (\ref{def:kappa:min:KL}), (\ref{def:kappa:bar:bis}) and  Lemma \ref{bound:l2:dev}, the  last integral in the last display is bounded from above by
\[(C-1)^2\epsilon^2\left(\frac{\rho_1+\bar\kappa}{\rho_1+\underline{\kappa}}\right)^{\frac{s_\star}{2}} e^{-\frac{(C-1)^2\epsilon^2\underline{\kappa}}{32}} + 2e^{-p},\]
provided that $\underline{\kappa}(C-1)\epsilon \geq 4\max(\sqrt{s_\star\underline{\kappa}},\rho_1(\epsilon+ s_\star^{1/2}\|\theta_\star\|_\infty))$. We conclude that
 \begin{multline}\label{bound:KL:eq2}
\textbf{1}_{\e}(z) \int \log\left(\frac{\rmd \Pi_\star^{(\infty)}}{\rmd\Pi}\right)\rmd R \leq   C_0 \left(\rho_1\bar s^{1/2}\epsilon + \mathsf{a}_2\bar s^{3/2}\epsilon^3\right)
+\frac{3\rho^2(\epsilon + \|\theta_\star\|_2)^2}{2(\rho_1+\bar \kappa)} \\
+C_0(\rho_1 +\bar\kappa)\epsilon^2\left(\frac{\rho_1+\bar\kappa}{\rho_1+\underline{\kappa}}\right)^{\frac{s_\star}{2}} e^{-\frac{(C-1)^2\epsilon^2\underline{\kappa}}{32}} + 2(\rho_1 +\bar\kappa)e^{-p} +2 \textbf{1}_{\e}(z) (1-\Pi(\delta_\star \vert z)) \\
+  (\rho_1+\bar\kappa)\textbf{1}_{\e}(z) \int_{ \{\delta_\star\}\times \rset^p\setminus\cB^{(\delta_\star)}} \|[\theta]_\delta - \hat\theta_\delta\|_2^2R(\rmd\delta,\rmd\theta).\end{multline}
In the particular case where $R=\Pi_\star^{(\infty)}$, Lemma \ref{bound:l2:dev} gives
\begin{equation}\label{bound:moment:Q}
\int_{ \{\delta_\star\}\times \rset^p\setminus\cB^{(\delta_\star)}} \|[\theta]_\delta - \hat\theta_\delta\|_2^2R(\rmd\delta,\rmd\theta)\leq (C-1)^2\epsilon^2\left(\frac{\bar\kappa}{\underline{\kappa}}\right)^{\frac{s_\star}{2}} e^{-\frac{(C-1)^2\epsilon^2\underline{\kappa}}{32}}.\end{equation}
The result follows by plugging the last inequality in (\ref{bound:KL:eq2}). We note that the last display also holds true if $R=\tilde\Pi_\star^{(\infty)}$.
\vspace{-0.6cm}
\begin{flushright}
$\square$
\end{flushright}

\subsection{Proof of Theorem \ref{thm:VA}}\label{sec:proof:thm:VA}
We introduce 
\[\tilde Q (\delta,\rmd \theta) \propto \tilde Q(\delta)  e^{-\frac{1}{2}(\theta - \hat\theta_\star)' \left(\S\cdot \bar{\mathcal{I}}\right) (\theta-\hat\theta_\star)}  \rmd\theta,\]
for some arbitrary distribution $\tilde Q$ on $\Delta$ of the form $\tilde Q(\delta) = \prod_{j=1}^p \alpha_j^{\delta_j}(1-\alpha_j)^{1-\delta_j}$, where $\alpha_j=\alpha$ if $\delta_{\star j}=1$, and $\alpha_j=1-\alpha$ otherwise, for some $\alpha\in(0,1)$.  Note that $\tilde Q\in\mathcal{Q}$,  and  $\|\tilde Q - \tilde\Pi^{(\infty)}_\star\|_\tv\to 0$, as $\alpha\to 1$. 

The strong convexity of the KL-divergence (Lemma \ref{lem:strong:conv:kl}) allows us to write, for any $t\in (0,1)$,
\[t\KL{Q}{\Pi} + (1-t) \KL{\tilde Q}{\Pi} \geq \KL{tQ + (1-t)\tilde Q}{\Pi} + \frac{t(1-t)}{2}\|\tilde Q-Q\|_\tv^2.\]
This implies that
\[\frac{t(1-t)}{2}\|\tilde Q-Q\|_\tv^2 \leq \KL{\tilde Q}{\Pi} + t\left(\KL{Q}{\Pi} - \KL{\tilde Q}{\Pi}\right)\leq \KL{\tilde Q}{\Pi},\]
where the second inequality uses the fact that $\tilde Q\in\mathcal{Q}$, and $Q$ is the minimizer of the KL-divergence over that family. Hence with $t=1/2$ we have
\begin{eqnarray*}
\|Q - \tilde\Pi_\star^{(\infty)}\|_\tv^2 & \leq & 2 \|Q - \tilde Q\|_\tv^2 +  2\|\tilde Q-\tilde\Pi_\star^{(\infty)}\|_\tv^2  \\
 & \leq & 16 \KL{\tilde Q}{\Pi} + 2\|\tilde Q-\tilde\Pi_\star^{(\infty)}\|_\tv^2,\end{eqnarray*}
 where the second inequality uses the bound on $\|\tilde Q-Q\|_\tv^2$ obtained above.
\begin{multline*}
\KL{\tilde Q}{\Pi} = \int\log\left(\frac{\rmd \tilde Q}{\rmd \Pi}\right)\rmd \tilde Q \\
= \int_{(\delta_\star\times\rset^p)^c} \log\left(\frac{\rmd \tilde Q}{\rmd \Pi}\right)\rmd \tilde Q +  \int_{\delta_\star\times\rset^p} \log\left(\frac{\rmd \tilde Q}{\rmd \Pi}\right)\rmd \tilde Q.\end{multline*}
We note that $\tilde\Pi_\star^{(\infty)}$ is precisely the restriction of  $\tilde Q$ on $\{\delta_\star\}\times \rset^p$. Therefore,  on $\{\delta_\star\}\times \rset^p$, the density $\frac{\rmd \tilde Q}{\rmd \Pi}$ can be written as
\[\frac{\rmd \tilde Q}{\rmd \Pi} = \tilde Q(\{\delta_\star\}\times \rset^p) \frac{\rmd \tilde\Pi_\star^{(\infty)}}{\rmd \Pi_\star^{(\infty)}} \frac{\rmd \Pi_\star^{(\infty)}}{\rmd \Pi} .\]
Hence
\begin{multline*}
\int_{\delta_\star\times\rset^p} \log\left(\frac{\rmd \tilde Q}{\rmd \Pi}\right)\rmd \tilde Q \leq 
\KL{\tilde\Pi_\star^{(\infty)}}{\Pi_\star^{(\infty)}} + \tilde Q(\delta_\star) \int_{\delta_\star\times\rset^p} \log\left(\frac{\rmd \Pi^{(\infty)}_\star}{\rmd \Pi}\right)\rmd\tilde\Pi_\star^{(\infty)}.
\end{multline*}
On the other hand,
\begin{multline}\label{bound:VA:eq5}
\int_{(\delta_\star\times\rset^p)^c} \log\left(\frac{\rmd \tilde Q}{\rmd \Pi}\right)\rmd \tilde Q \\
= \sum_{\delta\neq\delta_\star}\tilde Q(\delta)\left[\log\left(\frac{\tilde Q(\delta)}{\Pi(\delta\vert z)}\right) + \int \log\left(\frac{\tilde Q(\theta)}{\Pi(\theta\vert \delta,z)}\right)\tilde Q(\theta)\rmd \theta\right]\\
\leq \left(1- \tilde Q(\delta_\star)\right) \max_{\delta\in\Delta} \left[-\log(\Pi(\delta\vert z)) + \int \log\left(\frac{\tilde Q(\theta)}{\Pi(\theta\vert \delta,z)}\right)\tilde Q(\theta)\rmd \theta\right].\end{multline}
Collecting all the terms we obtain
\begin{multline*}
\|Q - \tilde\Pi_\star^{(\infty)}\|_\tv^2 \leq 16 \KL{\tilde\Pi_\star^{(\infty)}}{\Pi_\star^{(\infty)}} +2\|\tilde Q-\tilde\Pi_\star^{(\infty)}\|_\tv^2 \\
+ 16 \tilde Q(\delta_\star) \int_{\delta_\star\times\rset^p} \log\left(\frac{\rmd \Pi^{(\infty)}_\star}{\rmd \Pi}\right)\rmd\tilde\Pi_\star^{(\infty)} \\
+16\left(1- \tilde Q(\delta_\star)\right) \max_{\delta\in\Delta} \left[-\log(\Pi(\delta\vert z)) + \int \log\left(\frac{\tilde Q(\theta)}{\Pi(\theta\vert \delta,z)}\right)\tilde Q(\theta)\rmd \theta\right].
\end{multline*}
Letting $\alpha\to 1$ on both sides  yields
\[
\|Q - \Pi_\star^{(\infty)}\|_\tv^2 \leq 16 \KL{\tilde\Pi_\star^{(\infty)}}{\Pi_\star^{(\infty)}} + 16 \int_{\delta_\star\times\rset^p} \log\left(\frac{\rmd \Pi^{(\infty)}_\star}{\rmd \Pi}\right)\rmd\tilde\Pi_\star^{(\infty)}.
\]
Using Lemma \ref{lem:KL:Gaussian}, we have
\[\KL{\tilde\Pi_\star^{(\infty)}}{\Pi_\star^{(\infty)}}  = \frac{\zeta}{2},\]
where $\zeta = \log\left(\frac{\det(\bar{\mathcal{I}})}{\det(\S\cdot\bar{\mathcal{I}})}\right)  + \textsf{Tr}\left(\bar{\mathcal{I}}^{-1}(\S\cdot \bar{\mathcal{I}})\right) - p$. Hence the theorem.
\vspace{-0.6cm}
\begin{flushright}
$\square$
\end{flushright}

\subsection{Proof of Corollary \ref{coro:lm}}\label{proof:coro:lm}
\paragraph{\underline{\tt On the event $\G$}}\;\; We first constructed the event $\G$.  Let $\tau_\Sigma \eqdef \max_j \Sigma_{jj}$. For $c_1=5$, $c_2=1/4$, and $c_3=9$, for $j=1,\ldots,p+1$, we set $\G \eqdef \bigcap_{j=1}^{p+1}\H^{(j)}$, where
\begin{multline*}
\H^{(j)}\eqdef\left\{ Z\in\rset^{n\times (p+1)}:\; \max_{1\leq k\leq p,\;k\neq j} \left|\frac{\|Z_k\|_2^2}{n}-\Sigma_{jj}\right| \leq c_1\tau_\Sigma\right.\\
\left. \mbox{ for all }v\in\rset^p:\;\frac{\|X^{(j)}v\|_2}{\sqrt{n}} \geq c_2\|\Sigma^{1/2}v\|_2 -c_3\tau_\Sigma\sqrt{\frac{\log(p)}{n}}\|v\|_1\right\}.\end{multline*}
When B\ref{H:lin:mod} holds, by Theorem  1 of \cite{raskutti:etal:10} and Lemma 1 of \cite{ravikumaretal11} there exist absolute positive constant $c_4,c_5$ such that 
\[\PP(Z\notin \G)\leq 4(p+1)e^{-n/128} + c_4(p+1)e^{-c_5 n} \to 0,\]
as $p\to\infty$, provided that $n\geq (256/\min(1,128 c_5))\log(p)$.  In what follows we will assume that $n$ satisfies
\begin{equation}\label{ss:cond:lm}
n\geq \frac{256}{\min(1,128 c_5)}\log(p),\;\;\mbox{ and } n\geq \left(\frac{16 c_3\tau_\Sigma}{c_2\lambda_{\textsf{min}}^{1/2}(\Sigma)}\right)^2 \left[\max_j 2s^{(j)}_\star \left(1+\frac{6}{u}\right)+\frac{4}{u}\right] \log(p).\end{equation}

\paragraph{\underline{\tt Problem set up and posterior sparsity}}\;\; 
For any $j$ we can partition $Z$ as $Z = [Y^{(j)},X^{(j)}]$, and under B\ref{H:lin:mod},
\begin{equation}\label{cond:model:lm}
Y^{(j)} = X^{(j)}\theta_\star^{(j)} + \frac{1}{\sqrt{[\vartheta_\star]_{jj}}} V^{(j)},\;\;\mbox{ where } V^{(j)}\vert X^{(j)}\sim \textbf{N}_n(0,I_n).\end{equation}
The quasi-likelihood of the $j$-th regression is $\ell^{(j)}(u;z) = (1/2\sigma_j^2)\|Y^{(j)} - X^{(j)}u\|_2^2$. The resulting quasi-posterior distribution $\Pi^{(j)}(\cdot\vert Z)$ on $\Delta\times\rset^p$ fits squarely in the framework developed in the paper, and we will successively apply to it the different general theorems obtained above. However to keep the notation simple, and when there is no risk of confusion, we shall omit the index $j$ from the various quantities. For instance we will $Y$ instead of $Y^{(j)}$, $X$ instead of $X^{(j)}$, etc... 

From the expression of the quasi-likelihood, we have
\[\nabla \ell(\theta_\star;Z) = \frac{1}{\sigma^2}X'(Y-X \theta_\star),\]
and
\[\L_{\theta_\star}(u;Z) = -\frac{n}{2\sigma^2} (u-\theta_\star)'\left(\frac{X'X}{n}\right)(u-\theta_\star),\;\;u\in\rset^p,\]
which does not depend on $Y$. Let us first apply  Theorem \ref{thm:0}. We set 
\begin{multline*}
\G_1 \eqdef \H\bigcap \left\{Z = [Y^{(j)},X^{(j)}]\in\rset^{n\times (p+1)}:\;\right.\\
\left. \max_{1\leq k\leq p,\;k\neq j}\left|\pscal{X_k}{Y^{(j)}-X^{(j)}\theta^{(j)}_\star}\right|\leq \sqrt{\frac{6\tau_\Sigma}{[\vartheta_{\star}]_{jj}}(1+c_1)n\log(p)}\right\}.\end{multline*}
We  set
\[\bar\rho =\frac{2}{\sigma^2_j}\sqrt{\frac{6\tau_\Sigma}{[\vartheta_{\star}]_{jj}} (1+c_1)n\log(p)},\;\;\;\bar\kappa  = (n/\sigma^2)(1+c_1)s_\star^{(j)}\tau_\Sigma.\]
We stress again that these quantities and events are specific to the $j$-th regression. 
From the expressions of $\nabla\ell(\theta_\star;z)$, and $\L_{\theta_\star}(\theta;z)$, it is straightforward to check that $\G_1\subseteq\e_0$ if we define $\e_0$ in H\ref{H1} by taking $\bar\rho$ and $\bar\kappa$ as above.  We also note that by the choice of $\rho_1$ and the conditions $\|\theta_\star\|_\infty= O(1)$,   we have $32\|\theta_\star\|_\infty\rho_1 \leq \bar\rho$ for all $p$ large enough.  To apply Theorem \ref{thm:0}, it only remains to check  (\ref{eq:curvature:cond:thm0}).  With $\G_1$ and $\mathcal{L}_{\theta_\star}$ as defined above, we have
\begin{multline}\label{res:conc:lm}
\PE_\star\left[\textbf{1}_{\G_1}(Z)e^{\L_{\theta_\star}(u;Z)  + \left(1-\frac{\rho_1}{\bar\rho}\right)\pscal{\nabla\ell(\theta_\star;Z)}{u-\theta_\star}}\right] \\
\leq \PE_\star\left[\textbf{1}_{\H}(X)e^{-\frac{n}{2\sigma^2} (u-\theta_\star)'\left(\frac{X'X}{n}\right)(u-\theta_\star)} \PE_\star\left(e^{\frac{1}{\sigma^2}\left(1-\frac{\rho_1}{\bar\rho}\right)(Y-X\theta_\star)'X(u-\theta_\star)}\vert X\right)\right]\\
=\PE_\star\left[\textbf{1}_{\H}(X)e^{-\frac{n}{2\sigma^2}\left(1- \frac{\left(1-\frac{\rho_1}{\bar\rho}\right)^2}{\sigma^2\vartheta_{\star,11}}\right) (u-\theta_\star)'\left(\frac{X'X}{n}\right)(u-\theta_\star)}\right],\end{multline}
where the equality uses the moment generating function of the conditionally Gaussian random variable $V$. For $u\in\rset^p$ such that $\|\delta_\star^c\cdot(u-\theta_\star)\|_1 \leq 7\|\delta_\star\cdot(u-\theta_\star)\|_1$, and for $Z\in\G$, we have
\[\frac{1}{\sqrt{n}}\|X(u-\theta_\star)\|_2 \geq c_2\lambda_{\textsf{min}}(\Sigma)^{1/2}\|u-\theta_\star\|_2 -8c_3s_\star^{1/2}\tau_\Sigma\sqrt{\frac{\log(p)}{n}} \|(\delta_\star\cdot(u-\theta_\star)\|_2.\]
It follows that
\[(u-\theta_\star)'\left(\frac{X'X}{n}\right)(u-\theta_\star) \geq \frac{c_2^2}{4}\lambda_{\textsf{min}}(\Sigma) \|\delta_\star\cdot(u-\theta_\star)\|_2^2,\]
if the sample size $n$ satisfies 
\[n\geq \left(\frac{16 c_3\tau_\Sigma}{c_2\lambda_{\textsf{min}}^{1/2}(\Sigma)}\right)^2 s_\star \log(p).\]
Therefore, 
Since $ \sigma^2[\vartheta_{\star}]_{jj}\geq 1$, we conclude from (\ref{res:conc:lm}) that (\ref{eq:curvature:cond:thm0}) holds with 
\[r_0(x) = \frac{nc_2^2\lambda_{\textsf{min}}(\Sigma)}{4\sigma^2}\left(1- \left(1-\frac{\rho_1}{\bar\rho}\right)^2\right)   x^2 \geq \frac{nc_2^2\lambda_{\textsf{min}}(\Sigma)}{4\sigma^2}\frac{\rho_1}{\bar\rho}  x^2,\]
and hence 
\[\mathsf{a}_0 = \frac{64s_\star\sigma^2\rho_1\bar\rho}{nc_2^2\lambda_{\textsf{min}}(\Sigma)} \leq C_0,\] 
for some absolute constant $C_0$, as $p\to\infty$, given the choice of $n$, $\rho_1$ and $\bar\rho$. The condition (\ref{eq:cond:thm:0}) is easily seen to hold for $c_0=2$. Theorem \ref{thm:0} then gives 
\begin{equation}\label{eq:sparse:lm}
\PE_\star\left[\textbf{1}_{\G_1}(Z) \Pi\left(\|\delta\|_0> s_\star\left(1+ \frac{6}{u}\right) + \frac{4}{u}\vert Z\right)\right] \leq \frac{2}{p^2}.
\end{equation}
Since $Y = X\theta_\star + \frac{1}{\sqrt{[\vartheta_{\star}]_{jj}}}V$, where $V\vert X\sim\textbf{N}(0,I_n)$, by a standard union bound argument, and Gaussian tail bounds 
\begin{multline*}
\textbf{1}_{\H}(X)\PP(Z\notin \G_1\vert X) \\
=\textbf{1}_{\H}(X)\PP\left(\max_{1\leq k\leq p+1,\;k\neq j}\;|\pscal{X_k}{V}|>\sqrt{6\tau_\Sigma(1+c_1)n\log(p)}\; \vert X\right)\leq \frac{2}{p^2}.\end{multline*}
Therefore, (\ref{eq:sparse:lm}) becomes 
\begin{equation}\label{eq:sparse:lm:2}
\PE_\star\left[\textbf{1}_{\H}(X) \Pi\left(\|\delta\|_0> s_\star\left(1+ \frac{6}{u}\right) +  \frac{4}{u}\vert Z\right)\right] \leq \frac{4}{p^2}.
\end{equation}
%Setting $\bar s^{(j)} \eqdef s_\star\left(1+ \frac{6}{u}\right) + k$, we can write (\ref{eq:sparse:lm:2}) as
%\[\PE_\star\left[\textbf{1}_{\G}(Z) \Pi^{(j)}\left(\|\delta\|_0> \bar s^{(j)}\vert Z\right)\right] \leq \frac{2}{p^{uk/2}} + \frac{2}{p^2}.\]
%Hence 
%\begin{multline*}
%\PE_\star\left[\bar \Pi\left(\|\bar\delta^{(j)}\|_0>\bar s^{(j)},\;\mbox{ for some }  j\vert Z\right)\right]\\
%\leq \sum_{j=1}^{p+1} \PE_\star\left[\textbf{1}_{\G}(Z) \Pi^{(j)}\left(\|\delta\|_0> \bar s^{(j)}\vert Z\right)\right] \leq \frac{4p}{p^{uk/2}} + \frac{4}{p}.\end{multline*}

 \paragraph{\underline{\tt Contraction and rate}}\;\; 
Set $\bar s = s_\star\left(1+ \frac{6}{u}\right) + \frac{4}{u}$.  We now apply Theorem \ref{thm1} to $\Pi^{(j)}$. With similar calculations as above, for $\|\delta\|_0\leq \bar s$, and $u\in\rset^p_\delta$, 
\[\mathcal{L}_{\theta_\star}(u;z) \leq - \frac{n c_2^2\lambda_{\textsf{min}}(\Sigma)}{8\sigma^2} \|u-\theta_\star\|_2^2,\]
provided that the sample size $n$ satisfies (\ref{ss:cond:lm}) which shows that $\G_1\subseteq\e_1(\bar s)$ with the rate function $\r(x) = x^2n c_2^2\lambda_{\textsf{min}}(\Sigma)/(4\sigma^2)$. The contraction rate $\epsilon$ then becomes
\[\epsilon = \frac{4\sigma^2\bar\rho (\bar s+s_\star)^{1/2}}{nc_2^2\lambda_{\textsf{min}}(\Sigma)} = \frac{8\sqrt{2(1+c_1)}}{c_2^2} \frac{\tau_{\Sigma}^{1/2}}{\lambda_{\textsf{min}}(\Sigma) [\vartheta_{\star}]_{jj}^{1/2}} \sqrt{\frac{(\bar s+s_\star)\log(p)}{n}}.\]
The condition (\ref{tech:cond:thm1}) holds by choosing the absolute constant $C\geq 3$ large enough so that $C(1+c_1)\tau_\Sigma\geq (1+u)c_2^2\lambda_{\textsf{min}}(\Sigma) \sigma^2[\vartheta_{\star}]_{jj}$.  Theorem \ref{thm1} then gives
\begin{equation}\label{eq:thm:1:lm}
\PE_\star\left[\textbf{1}_{\H}(X)\Pi\left(\cB^c\vert Z\right)\right]\leq   \PE_\star\left[\textbf{1}_{\G_1}(Z)\Pi\left(\cB^c\vert Z\right)\right]  + \PE_\star\left[\textbf{1}_{\H}(X)\PP(Z\notin \G_1\vert X) \right] \leq \frac{C_0}{p^{2}}.\end{equation}

 \paragraph{\underline{\tt Model selection consistency}}\;\; 
We now apply Theorem \ref{thm:sel} to $\Pi^{(j)}$ With $\bar s= \bar s^{(j)}$ as above, set
\begin{multline*}
\G_2 \eqdef \G_1\ \bigcap_{k=1}^{\bar s-s_\star} \left\{Z = [Y,X]\in\rset^{n\times (p+1)}:\;\right.\\
\left. \max_{\delta\supseteq\delta_\star,\;\|\delta\|_0=s_\star +k}\; (Y-X\theta_\star)'\mathcal{P}_{\delta\setminus\delta_\star}(Y-X\theta_\star) \leq \sigma^2 ku\log(p)\right\},\end{multline*}
where for $\delta\supseteq\delta_\star$, $\mathcal{P}_{\delta\setminus\delta_\star}$ is the orthogonal projector on the sub-space of $\textsf{span}(X_\delta)$ that is orthogonal to $\textsf{span}(X_{\delta_\star})$, where the notation $\textsf{span}(X_\delta)$ denotes the linear space spanned by the columns of $X_\delta$.  We note that $\G_2\subseteq\e_{2}(\bar s)$. Indeed, for $\delta\in\A_{\bar s}$,  and $X\in\H$, the matrix $X_\delta$ is full-rank column. Hence if $X_\delta =Q_{(\delta)} R_{(\delta)}$ is the QR decomposition of $X_\delta$, then
\[\ell^{[\delta]}(\hat\theta_\delta;Z) - \ell^{[\delta_\star]}(\hat\theta_\star;Z) = \frac{1}{2\sigma^2}\|Q_{(\delta\setminus\delta_\star)}'(Y-X\theta_\star)\|_2^2 = \frac{1}{2\sigma^2}(Y-X\theta_\star)'\mathcal{P}_{\delta\setminus\delta_\star} (Y-X\theta_\star).\]
It then follows  that $\G_2\subseteq\e_{2}(\bar s)$. Furthermore, since $\ell$ is quadratic,  (\ref{def:kappa:min}) holds with $\underline{\kappa} = n c_2^2\lambda_{\textsf{min}}(\Sigma)/(4\sigma^2)$, and (\ref{def:kappa:bar:bis}) holds with $\bar\kappa  = (n/\sigma^2)(1+c_1)s_\star^{(j)}\tau_\Sigma$, provided that the sample size condition (\ref{ss:cond:lm}) holds.  Theorem \ref{thm:sel} (applied $\mathsf{a}_2=0$), and (\ref{eq:thm:1:lm})  give for all $k\geq 0$,
\begin{multline}\label{eq:thm:sel:lm:2}
\PE_\star\left[\textbf{1}_{\G_2}(Z)\Pi\left(\cB_k^c\vert Z\right)\right] \leq C_0\left(\sqrt{\frac{\rho_1}{\underline{\kappa}}}\frac{1}{p^{u/2}}\right)^{k+1} +\PE_\star\left[\textsf{1}_{\G_1}(Z)\Pi(\cB^c\vert Z)\right] \\
\leq C_0\left(\sqrt{\frac{\rho_1}{\underline{\kappa}}}\frac{1}{p^{u/2}}\right)^{k+1} + \frac{C_0}{p^{2}}.
\end{multline}
To replace $\G_2$ by $\H$, we write
\[\PE_\star\left[\textbf{1}_{\H}(X)\Pi\left(\cB_k^c\vert Z\right)\right] \leq \PE_\star\left[\textbf{1}_{\G_2}(Z)\Pi\left(\cB_k^c\vert Z\right)\right]   + \PP_\star\left[X\in \H,Z\notin\G_2\right]  .\]
Given $\delta\in\A_{s_\star +k}$, by the Hanson-Wright inequality (Lemma \ref{lem:HW}), 
\begin{multline*}
\textbf{1}_{\H}(X) \PP\left((Y-X\theta_\star)'\mathcal{P}_{\delta\setminus\delta_\star}(Y-X\theta_\star)> \sigma^2 ku\log(p)\vert X\right)  \\
=  \textbf{1}_{\H}(X)\PP\left(V'\mathcal{P}_{\delta\setminus\delta_\star} V > \sigma^2[\vartheta_{\star}]_{jj} ku\log(p)\vert X\right)\leq \frac{1}{p^{\frac{\sigma^2[\vartheta_{\star}]_{jj}uk}{4}}},\end{multline*}
for all $p$ large enough.  Hence by union bound, for $\sigma^2[\vartheta_{\star}]_{jj} u\geq 8$,
\[\textbf{1}_{\H}(X)\PP(Z\notin\G_2\vert X) \leq  \textbf{1}_{\H}(X)\PP(Z\notin\G_1\vert X) + \sum_{k\geq 1} \frac{1}{p^{\frac{\sigma^2[\vartheta_{\star}]_{jj}uk}{4}}} \leq \frac{4}{p^2} .\]
We conclude that for all $k\geq 0$,
\begin{equation}\label{eq:thm:sel:lm:3}
\PE_\star\left[\textbf{1}_{\H}(X)\Pi\left(\cB_k^c\vert Z\right)\right] 
\leq C_0\left(\sqrt{\frac{\rho_1}{\underline{\kappa}}}\frac{1}{p^{u/2}}\right)^{k+1}  +  \frac{C_0}{p^{2}}.
\end{equation} 
 
 \paragraph{\underline{\tt Bernstein-von Mises approximation and variational approximations}}\;\; 
Taking $k=0$ in (\ref{eq:thm:sel:lm:3}) together with Theorem \ref{thm:KL} gives
\begin{equation*}
\PE_\star\left[ \textbf{1}_{\G}(Z) \max_{1\leq j\leq p+1} \KL{\Pi_\star^{(j,\infty)}}{\Pi^{(j)}}\right] \leq \frac{C_0\max_j(\bar s^{(j)} + s_\star^{(j)})}{\min_j [\vartheta_\star]_{jj}} \frac{\log(p)}{n} + \frac{C_0}{p^{\frac{u}{2}-1}} + \frac{C_0}{p}, \end{equation*}
for some absolute constant $C_0$, assuming that $\sigma^2[\vartheta_{\star}]_{jj}u \geq 16$, and $u>2$.
 Finally we apply (\ref{thm:VA:eq}) and (\ref{bound:moment:Q}) applied with $R=\tilde\Pi_\star^{(\infty)}$  to get the stated controls on the variational approximations.  This ends the proof.
\vspace{-0.6cm}
\begin{flushright}
$\square$
\end{flushright}

\medskip
\subsection{Proof of  Corollary \ref{coro:spca}}\label{proof:coro:spca}
The proof follows the same steps as in the proof of Theorem \ref{thm1}. Let 
\begin{multline*}
\bar\rho =\frac{8C_0\vartheta}{\sigma^2}  \sqrt{n\left(\frac{p}{\vartheta}+\log(p)\right)},\;\
\bar\kappa = \frac{c_1n}{\sigma^2},\;\;\r(x) = \frac{c_2n}{\sigma^2} x^2, \;\; \\
\mbox{ and } \;\; \epsilon = \frac{8C_0\vartheta}{c_2} \sqrt{\frac{\frac{p}{\vartheta}+\log(p)}{n}\left(\bar s+s_\star\right)}, \end{multline*}
for some absolute constants $C_0,c_1,c_2$, that we specify later.
For $\theta_0\in\{\theta_\star,-\theta_\star\}$, let $\cB_{\theta_0}$ be the set $\cB$ defined in (\ref{def:set:B}) but with $\theta_\star$ replaced by $\theta_0$, $\epsilon$ as above, and for some absolute constant $C,C_1$. Similarly let $\e_{0,\theta_0}$ (resp. $\e_{1,\theta_0}(\bar s)$) be the set $\e_0$ (resp. $\e_1(\bar s)$) but with $\theta_\star$ replaced by $\theta_0$, and $\bar\kappa,\bar\rho$ as above and the rate function $\r$ as above. Also for absolute constant $C \geq 3$, set
\begin{multline*}
\F_{1,\theta_0}\eqdef \bigcup_{\delta\in \Delta_{\bar s}} \{\delta\}\times\left\{\theta\in\rset^p:\; \|\theta_\delta-\theta_0\|_2> C\epsilon\right\},\;\;\;\\
\F_{2,\theta_0}\eqdef \bigcup_{\delta\in\Delta_{\bar s}} \{\delta\}\times\left\{\theta\in\rset^p:\;\|\theta_\delta-\theta_0\|_2\leq C\epsilon,\;\;\mbox{ and } \;\;\|\theta-\theta_\delta\|_2>\epsilon_1 \right\}.\end{multline*}
From the definitions we can write $\Delta\times\rset^p=\{\delta:\;\|\delta\|_0>\bar s\} \cup \F_{1,\theta_0}\cup\F_{2,\theta_0}\cup \cB_{\theta_0}$. Using this and $\Pi(\|\delta\|_0>\bar s\vert X)=0$, it follows that
\[\Pi\left(\cB_{\theta_0}\vert X\right) = 1- \Pi\left(\F_{1,\theta_0}\vert X\right) - \Pi\left(\F_{2,\theta_0}\vert X\right). \]
Hence it suffices to show that for $\varepsilon\in\{-1,1\}$,
\[\lim_{p\to\infty} \PE_\star\left[\textbf{1}_{\{\textsf{sign}(\pscal{V_1}{\theta_\star})=\varepsilon\}} \left(\Pi\left(\F_{1,\varepsilon\theta_\star}\vert X\right) + \Pi\left(\F_{2,\varepsilon\theta_\star}\vert X\right)\right) \right]=0.\]
We have
\begin{multline}\label{eq:proof:coro:spca:eq1}
\PE_\star\left[\textbf{1}_{\{\textsf{sign}(\pscal{V_1}{\theta_\star})=\varepsilon\}} \left(\Pi\left(\F_{1,\varepsilon\theta_\star}\vert X\right) + \Pi\left(\F_{2,\varepsilon\theta_\star}\vert X\right)\right) \right]\\
 \leq  \PP_\star\left(X\notin\e_{1,\varepsilon\theta_\star}(\bar s),\textsf{sign}(\pscal{V_1}{\theta_\star})=\varepsilon\right) \\
+ \PE_\star\left[\textbf{1}_{\e_{1,\varepsilon\theta_\star}(\bar s)} (X)  \left(\Pi\left(\F_{1,\varepsilon\theta_\star}\vert X\right) + \Pi\left(\F_{2,\varepsilon\theta_\star}\vert X\right)\right) \right].
\end{multline}
With the same argument as in the proof of Theorem \ref{thm1}, we have 
\[\PE_\star\left[\textbf{1}_{\e_{1,\varepsilon\theta_\star}(\bar s)} (X) \Pi\left(\F_{2,\varepsilon\theta_\star}\vert X\right) \right]\leq 4e^{-p}.\]
We use the test constructed in Lemma \ref{test} with $\Theta_\star=\{\theta_\star,-\theta_\star\}$, and $M=C$ to write
\begin{multline*}
\PE_\star\left[\textbf{1}_{\e_{1,\varepsilon\theta_\star}(\bar s)} (X)  \Pi\left(\F_{1,\varepsilon\theta_\star}\vert X\right) \right]\leq \E_\star[\phi(X)] \\
+\PE_\star\left[\textbf{1}_{\e_{1,\varepsilon\theta_\star}(\bar s)} (X)\left(1-\phi(X)\right)\Pi\left(\F_{1,\varepsilon\theta_\star}\vert X\right) \right],\end{multline*}
and
\[\PE_\star[\phi(X)] \leq \frac{4(9p)^{\bar s}e^{-\frac{C}{8}\bar \rho_1(\bar s+s_\star)^{1/2}\epsilon}}{1-e^{-\frac{C}{8}\bar \rho_1(\bar s+s_\star)^{1/2}\epsilon}}\to 0,\]
as $p\to\infty$, by appropriately choosing the absolute constant $C$. 
The same argument leading to (\ref{eq:proof:thm:contrac:eq4}) applies to the second term on the right hand side of the last display, and we deduce that
\[\lim_{p\to\infty} \PE_\star\left[\textbf{1}_{\e_{1,\varepsilon\theta_\star}(\bar s)} (X)\left(1-\phi(X)\right)\Pi\left(\F_{1,\varepsilon\theta_\star}\vert X\right) \right]=0.\]
Collecting these limiting behaviors we conclude from (\ref{eq:proof:coro:spca:eq1}) that
\begin{multline*}
 \lim_{p\to\infty}  \PE_\star\left[\textbf{1}_{\{\textsf{sign}(\pscal{V_1}{\theta_\star})=\varepsilon\}} \left(\Pi\left(\F_{1,\varepsilon\theta_\star}\vert X\right) + \Pi\left(\F_{2,\varepsilon\theta_\star}\vert X\right)\right) \right]\\
 \leq  \lim_{p\to\infty} \PP_\star\left(X\notin\e_{1,\varepsilon\theta_\star}(\bar s),\textsf{sign}(\pscal{V_1}{\theta_\star})=\varepsilon\right).
\end{multline*}
Hence it suffices to show that with $\bar\kappa$, $\bar\rho$, and the rate function $\r$ as above we have $ \PP_\star\left(X\notin\e_{1,\varepsilon\theta_\star}(\bar s) \vert \textsf{sign}(\pscal{V_1}{\theta_\star})=\varepsilon\right)\to 0$, as $p\to\infty$.

For $\theta_0\in\{\theta_\star,-\theta_\star\}$, and $\theta\in\rset^p_\delta$, for any $\delta\in\Delta_{\bar s}$,
\[\mathcal{L}_{\theta_0}(\theta;X) = -\frac{n}{\sigma^2}(\theta - \theta_0)'\left(\frac{X'X}{n}\right)(\theta-\theta_0).\]
Lemma 1 of \cite{ravikumaretal11}, and Theorem 1 of \cite{raskutti:etal:10} then show that the function $\theta\mapsto \mathcal{L}_{\theta_0}(\theta;X)$ satisfies the requirements of $\e_{1,\varepsilon\theta_\star}(\bar s)$ with high probability, provided that the sample size $n$ satisfies  $n\geq C_0 (\bar s+ s_\star) \log(p)$, for some absolute constant $C_0$. Hence it remains only to show that 
\begin{equation}\label{prob:grad:spca}
\lim_{p\to\infty} \PP_\star\left(\|\nabla\ell(\varepsilon\theta_\star;X)\|_\infty>\frac{\bar \rho}{2} ,\textsf{sign}(\pscal{V_1}{\theta_\star})=\varepsilon\right) = 0,\end{equation}
where $\bar \rho$ is as defined at the beginning of the proof. The largest eigenvalue of $\Sigma$ is $1+\vartheta$ with corresponding eigenvector $\theta_\star$. Hence, by the Davis-Kahan's theorem (Corollary 1 \cite{yu:etal:15}), on $\{\textsf{sign}(\pscal{V_1}{\theta_\star})=\varepsilon\}$, \begin{equation}\label{eq:dk}
\|V_1 -\varepsilon\theta_\star\|_2\leq \frac{4}{\vartheta}\left\|\frac{X'X}{n} -\Sigma\right\|_2.\end{equation}
Noting that $y=\Lambda_{11} U_1 = XV_1$, we have for $\theta_0\in\{\theta_\star,-\theta_\star\}$,
\begin{multline*}
\nabla\ell(\theta_0;X) = \frac{1}{\sigma^2}X'(y-X\theta_0) = \frac{1}{\sigma^2}X'X(V_1 -\theta_0) \\
= \frac{1}{\sigma^2}(X'X-n\Sigma)(V_1 -\theta_0) + \frac{n}{\sigma^2}\Sigma(V_1 -\theta_0).\end{multline*}
Hence
\[\|\nabla\ell(\theta_0;X)\|_\infty\leq \frac{n}{\sigma^2}\left(\left\|\frac{X'X}{n}-\Sigma\right\|_2 + \left(1+\|\theta_\star\|_\infty\vartheta\right)\right)\|V_1-\theta_0\|_2.\]
This bound together with the Davis-Kahan's theorem (\ref{eq:dk}) yields that on $\{\textsf{sign}(\pscal{V_1}{\theta_\star})=\varepsilon\}$, we have
\begin{equation}\label{eq:bound:cov:spca}
\|\nabla\ell(\varepsilon\theta_\star;X)\|_\infty\leq \frac{4n}{\sigma^2\vartheta}\left[ \left\|\frac{X'X}{n}-\Sigma\right\|_2 + \left(1+\|\theta_\star\|_\infty\vartheta\right)\right] \left\|\frac{X'X}{n} - \Sigma \right\|_2.\end{equation}
Note then that if the covariance $X'X/n$ satisfies
\begin{equation}\label{dev:cov:intrinsic:dim}
\left\|\frac{X'X}{n} - \Sigma\right\|_2 \leq C_0 \left[\sqrt{\frac{\frac{p}{\vartheta}+\log(p)}{n}} +\frac{\frac{p}{\vartheta}+\log(p)}{n} \right](\vartheta+1),\end{equation}
for some absolute constant $C_0$, then for $n\geq C_0(\frac{p}{\vartheta} + \log(p))$, we get $\|(X'X)/n - \Sigma\|_2 \leq C_0\vartheta$, and in that case (\ref{eq:bound:cov:spca}) gives 
\[\|\nabla\ell(\varepsilon\theta_\star;X)\|_\infty\leq \frac{4nC_0}{\sigma^2} \left\|\frac{X'X}{n} - \Sigma \right\|_2 \leq \frac{4C_0\vartheta}{\sigma^2}  \sqrt{n\left(\frac{p}{\vartheta}+\log(p)\right)} =\frac{\bar \rho_1}{2},\]
for some absolute constant $C_0$. This means that the probability on the right hand side of (\ref{prob:grad:spca}) is upper bounded by the probability that (\ref{dev:cov:intrinsic:dim}) fails.  The matrix $\Sigma$ has the property that $\textsf{Tr}(\Sigma)/\|\Sigma\|_2 =(p+\vartheta)/(1+\vartheta) \leq 1+ (p/\vartheta)$. Using this and by deviation bound for Gaussian distribution with covariance matrix with  low intrinsic dimension (see e.g. \cite{vershynin:18}~Theorem 9.2.4), (\ref{dev:cov:intrinsic:dim})  holds that with probability at least $1-1/p$.  Hence the results.
\vspace{-0.6cm}
\begin{flushright}
$\square$
\end{flushright}

\section{Some technical results}
%We make use of the following Gaussian version of the the Hanson-Wright inequality. The result follows directly from deviation bounds for Lipschitz  function of Gaussian random variables.
%
%\begin{lemma}\label{lem:HW} If $Z\sim\textbf{N}(0,I_m)$, and $A\in \rset^{m\times m}$ is a symmetric positive semi-definite matrix, then for all $t\geq \textsf{Tr}(A)$, 
%\[\PP\left(X'AX>t\right) \leq \exp\left[-\frac{(\sqrt{t}-\sqrt{\textsf{Tr}(A)})^2}{2\|A\|_2}\right].\]
%\end{lemma}

We make use of the following expression of the KL-divergence between two Gaussian distributions.

\begin{lemma}\label{lem:KL:Gaussian}
For $i=1,2$ let $\pi_i$ denote the probability distribution of the Gaussian distribution $\textbf{N}(\mu_i,\Sigma_i)$. We have
\[\KL{\pi_1}{\pi_2} = \frac{1}{2}(\mu_2-\mu_1)'\Sigma_2^{-1}(\mu_2-\mu_1) + \frac{1}{2}\log\left(\frac{\det(\Sigma_2)}{\det(\Sigma_1)}\right) +\frac{1}{2}\textsf{Tr}(\Sigma_2^{-1}\Sigma_1) -\frac{p}{2}.\]
\end{lemma}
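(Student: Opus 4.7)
The plan is to compute $\KL{\pi_1}{\pi_2}$ directly from its definition by substituting the explicit Gaussian log-densities and taking expectations componentwise. Writing the log-density of $\textbf{N}(\mu_i,\Sigma_i)$ as
\[
\log\pi_i(x) = -\frac{p}{2}\log(2\pi) - \frac{1}{2}\log\det(\Sigma_i) - \frac{1}{2}(x-\mu_i)'\Sigma_i^{-1}(x-\mu_i),
\]
the deterministic part of $\log\pi_1(x) - \log\pi_2(x)$ contributes $\tfrac{1}{2}\log(\det(\Sigma_2)/\det(\Sigma_1))$ to the KL, so it remains to integrate the two quadratic forms against $\pi_1$.

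Next I would handle the two quadratic expectations. For the first, since $X\sim\pi_1$ has covariance $\Sigma_1$, the cyclic property of the trace gives
\[
\PE_{\pi_1}\bigl[(X-\mu_1)'\Sigma_1^{-1}(X-\mu_1)\bigr] = \textsf{Tr}\bigl(\Sigma_1^{-1}\Sigma_1\bigr) = p.
\]
For the second, I would decompose $X - \mu_2 = (X-\mu_1) + (\mu_1-\mu_2)$, expand the quadratic form, and observe that the cross term vanishes in expectation because $\PE_{\pi_1}[X-\mu_1]=0$. This yields
\[
\PE_{\pi_1}\bigl[(X-\mu_2)'\Sigma_2^{-1}(X-\mu_2)\bigr] = \textsf{Tr}\bigl(\Sigma_2^{-1}\Sigma_1\bigr) + (\mu_2-\mu_1)'\Sigma_2^{-1}(\mu_2-\mu_1).
\]

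Combining the three pieces gives exactly the claimed identity. There is no real obstacle here: this is a standard computation, with the only minor care needed being to justify swapping expectation and trace in the trace identity (immediate since all quantities are finite for a non-degenerate Gaussian) and to keep track of signs when collecting the constants.
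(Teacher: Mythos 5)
Your computation is correct: substituting the Gaussian log-densities, using $\PE_{\pi_1}[(X-\mu_1)'\Sigma_1^{-1}(X-\mu_1)]=\textsf{Tr}(\Sigma_1^{-1}\Sigma_1)=p$, and expanding $X-\mu_2=(X-\mu_1)+(\mu_1-\mu_2)$ so the cross term vanishes yields exactly the stated identity. The paper states this lemma without proof as a standard fact, and your direct calculation is precisely the canonical argument one would supply, so there is nothing to add or compare.
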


The following lemma follows readily from standard Gaussian deviation bounds. We omit the details.
\begin{lemma}\label{bound:l2:dev}
Suppose that a $\rset^p$-valued random variable $X$ has density $f(x)\propto e^{-\ell(x) -\rho\|x\|_2^2/2}$, for  a twice differentiable function $\ell$ such that $m I_p \preceq \nabla^{(2)} \ell \preceq M I_p$, for some constants $0<m\leq M$, and $\rho>0$. Let $\mu$ denote the mode of $\ell$. For all $t\geq 4\max\left(\frac{\rho}{\rho + m}\|\mu\|_2,\sqrt{\frac{p}{\rho + m}}\right)$ we have
\begin{multline*}
\PP\left(\|X-\mu\|_2>t\right) \leq \left(\frac{M+\rho}{m+\rho}\right)^{\frac{p}{2}} e^{-\frac{t^2(m+\rho)}{16}},\;   \\
\mbox{ and } \;\; \PE\left( \|X-\mu\|_2^2 \textbf{1}_{\{\|X-\mu\|_2>t\}}\right) \leq t^2 \left(\frac{M+\rho}{m+\rho}\right)^{\frac{p}{2}} e^{-\frac{t^2(m+\rho)}{32}}.\end{multline*}
\end{lemma}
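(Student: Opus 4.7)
The plan is to compare the density of $X$ to a Gaussian density centered at the mode of the full negative log-density, and then reduce everything to a chi-squared tail bound. Write $g(x) \eqdef \ell(x) + (\rho/2)\|x\|_2^2$ so that $f(x) \propto e^{-g(x)}$. The Hessian hypothesis translates into $(m+\rho) I_p \preceq \nabla^{(2)} g \preceq (M+\rho) I_p$, so $g$ is smooth and strongly convex and admits a unique minimizer $\mu_g$.

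The first step is to control the displacement between $\mu$ (the mode of $\ell$) and $\mu_g$ (the mode of $g$, hence of $f$). Since $\nabla g(\mu_g)=0$ and $\nabla g(\mu) = \nabla \ell(\mu)+\rho\mu = \rho\mu$, $(m+\rho)$-strong convexity of $g$ gives $(m+\rho)\|\mu_g-\mu\|_2 \leq \|\nabla g(\mu_g)-\nabla g(\mu)\|_2 = \rho\|\mu\|_2$, so $\|\mu_g-\mu\|_2 \leq \frac{\rho}{m+\rho}\|\mu\|_2$. This is precisely what makes the first term in the $\max$ in the hypothesis on $t$ useful: it guarantees $\|\mu_g-\mu\|_2 \leq t/4$.

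The second step sandwiches the density. The quadratic bounds $g(\mu_g)+\frac{m+\rho}{2}\|x-\mu_g\|_2^2 \leq g(x) \leq g(\mu_g)+\frac{M+\rho}{2}\|x-\mu_g\|_2^2$ give an upper bound on the numerator and, after Gaussian integration, a lower bound $e^{-g(\mu_g)}(2\pi/(M+\rho))^{p/2}$ on the normalizer $\int e^{-g(u)}\rmd u$. Together,
\[ f(x) \leq \left(\frac{M+\rho}{m+\rho}\right)^{p/2}\varphi(x-\mu_g),\]
where $\varphi$ is the density of $\textbf{N}_p(0,(m+\rho)^{-1}I_p)$. Hence, with $Y\sim\textbf{N}_p(0,(m+\rho)^{-1}I_p)$,
\[ \PP(\|X-\mu\|_2>t) \leq \left(\frac{M+\rho}{m+\rho}\right)^{p/2}\PP(\|Y+\mu_g-\mu\|_2>t) \leq \left(\frac{M+\rho}{m+\rho}\right)^{p/2}\PP\!\left(\|Y\|_2>\tfrac{3t}{4}\right),\]
using the displacement bound from step one.

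The third step invokes chi-squared concentration. Since $(m+\rho)\|Y\|_2^2\sim \chi_p^2$, the Laurent--Massart inequality $\PP(\chi_p^2 \geq p+2\sqrt{ps}+2s)\leq e^{-s}$ applied with $s=t^2(m+\rho)/16$ yields $\PP(\|Y\|_2>3t/4)\leq e^{-t^2(m+\rho)/16}$; the hypothesis $t\geq 4\sqrt{p/(m+\rho)}$ ensures $s\geq p$, which makes $p+2\sqrt{ps}+2s \leq 9t^2(m+\rho)/16$. This proves the first claim. For the second claim, use the layer-cake formula
\[ \PE\!\left[\|X-\mu\|_2^2\textbf{1}_{\{\|X-\mu\|_2>t\}}\right] = t^2\PP(\|X-\mu\|_2>t) + \int_t^\infty 2u\,\PP(\|X-\mu\|_2>u)\,\rmd u,\]
substitute the tail bound just proved, and perform the explicit Gaussian integral in $u$. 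The main (minor) obstacle is purely bookkeeping: absorbing the polynomial prefactor $t^2+16/(m+\rho)$ and the residual constants into the slight weakening of the exponent from $1/16$ to $1/32$, which is possible since the hypothesis on $t$ forces $t^2(m+\rho)$ to be at least of order $p$.
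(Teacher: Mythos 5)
Your proof is correct, and at bottom it uses the same mechanism as the paper's: exploit the two-sided curvature bound to dominate the unnormalized density by a Gaussian-shaped function and to lower-bound the normalizing constant, yielding $f(x)\le\left(\frac{M+\rho}{m+\rho}\right)^{p/2}\varphi(x-\mathrm{center})$ for a Gaussian density $\varphi$ with covariance $(m+\rho)^{-1}I_p$, then reduce to Gaussian/chi-square tails. The execution, however, differs in two substantive places. The paper expands $\ell$ around $\mu$ and completes the square in $\frac m2\|x-\mu\|_2^2+\frac\rho2\|x\|_2^2$, so its dominating Gaussian is centered at $\frac{m}{m+\rho}\mu$ and the comparison carries an extra multiplicative factor of order $e^{\rho\|\mu\|_2^2/2}$, which is then dropped using the threshold on $t$ (a step that, as written, is loose: the stated threshold does not always absorb that factor); its second claim is obtained by Cauchy--Schwarz against a fourth moment of the dominating Gaussian. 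You instead center at the exact mode $\mu_g$ of the full density, quantify the displacement $\|\mu_g-\mu\|_2\le\frac{\rho}{m+\rho}\|\mu\|_2$ via strong convexity of $g=\ell+\frac\rho2\|\cdot\|_2^2$, and absorb the shift inside the norm (reducing to $\|Y\|_2>3t/4$), which is precisely what the first term of the $\max$ in the hypothesis is for; no extraneous exponential prefactor ever appears. You then use Laurent--Massart for $\chi^2_p$ where the paper uses the elementary bound $\PP\left(\|Z\|_2>\sqrt p+u\right)\le e^{-u^2/2}$, and the layer-cake identity with an explicit Gaussian integral where the paper uses Cauchy--Schwarz. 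Net effect: your route is cleaner and actually delivers the stated constants more honestly than the paper's own argument.

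One bookkeeping point in your final step: after the layer-cake computation you need $1+\frac{16}{t^2(m+\rho)}\le e^{t^2(m+\rho)/32}$. Under your hypothesis $t^2(m+\rho)\ge 16p$ this holds for every $p\ge 2$, but in the corner case $p=1$ with $t$ exactly at the threshold it fails by a factor below $1.3$. The easiest repair is to observe that Laurent--Massart permits the larger choice $s=t^2(m+\rho)/8$ (one checks $p+2\sqrt{ps}+2s\le\frac{9}{16}t^2(m+\rho)$ whenever $t^2(m+\rho)\ge 16p$), which strengthens the tail exponent and makes the truncated second-moment bound go through with room to spare for all $p\ge 1$. This is a constant-level cosmetic issue, not a gap in the argument.
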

\begin{proof}
By Taylor expansion of $\ell$ around $\mu$:
\[-\frac{M}{2}\|x-\mu\|_2^2 - \frac{\rho}{2}\|x\|_2^2 \leq \ell(\mu)-\ell(x) -\frac{\rho}{2}\|x\|_2^2 \leq -\frac{m}{2}\|x-\mu\|_2^2 - \frac{\rho}{2}\|x\|_2^2,\;\;\; x\in\rset^p.\]
This implies that
\[\int_{\rset^p} e^{\ell(\mu) -\ell(x) -\frac{\rho}{2}\|x\|_2^2}\rmd x \geq  e^{-\frac{M\rho}{2(M+\rho)}\|\mu\|_2^2}\left(\frac{2\pi}{\rho + M}\right)^{p/2}.\]
Therefore, for any $t>0$,
\begin{eqnarray*}
\PP\left(\|X-\mu\|_2>t\right) & \leq & e^{\frac{M\rho}{2(M+\rho)}\|\mu\|_2^2}\left(\frac{\rho+M}{\rho + m}\right)^{p/2} \PP\left(\left\|\frac{Z}{\sqrt{\rho+m}} - \frac{\rho\mu}{\rho+m} \right\|_2>t\right),\\
& \leq & e^{\frac{\rho}{2}\|\mu\|_2^2}\left(\frac{\rho+M}{\rho + m}\right)^{p/2} e^{-\frac{1}{2}\left(t\sqrt{m+\rho} -\frac{\rho\|\mu\|_2}{\sqrt{m+\rho}} -\sqrt{p}\right)^2}.\end{eqnarray*}
where $Z\sim\textbf{N}_p(0,I_p)$.  For $t\geq 4\max(\rho\|\mu\|_2/(\rho+m),\sqrt{\frac{p}{m+\rho}})$, this yields
 \[\PP\left(\|X-\mu\|_2>t\right) \leq \left(\frac{\rho+M}{\rho + m}\right)^{p/2} e^{-\frac{t^2(m+\rho)}{16}}.\]
By Holder's inequality
\[\PE\left( \|X-\mu\|_2^2 \textbf{1}_{\{\|X-\mu\|_2>t\}}\right) \leq \PE^{1/2}( \|X-\mu\|_2^4) \PP^{1/2}\left(\|X-\mu\|_2>t\right).\]
With the same calculations as above,
\begin{eqnarray*}
\PE( \|X-\mu\|_2^4) &  \leq & e^{\frac{\rho}{2}\|\mu\|_2^2}\left(\frac{\rho+M}{\rho + m}\right)^{p/2} \PE\left(\left\|\frac{Z}{\sqrt{\rho+m}} - \frac{\rho\mu}{\rho+m} \right\|_2^4\right),\\
 & \leq & 8 e^{\frac{\rho}{2}\|\mu\|_2^2}\left(\frac{\rho+M}{\rho + m}\right)^{p/2} \left(\frac{3p^2}{(m+\rho)^2} + \frac{\rho^4\|\mu\|_2^4}{(m+\rho)^4}\right)\\
& \leq & e^{\frac{\rho}{2}\|\mu\|_2^2}\left(\frac{\rho+M}{\rho + m}\right)^{p/2} \frac{t^4}{8},
\end{eqnarray*}
using the assumption $t\geq 4\max(\frac{\rho}{\rho + m}\|\mu\|_2,\sqrt{\frac{p}{m+\rho}})$, which implies the second inequality.
\end{proof}

The next results establishes the strong convexity of the KL divergence. The proof is due to I. Pinelis (\cite{pinelis:KL:18}). We reproduce it here for completeness.
\begin{lemma}\label{lem:strong:conv:kl}
Let $P_0,P_1$ be two probability measures that are absolutely continuous with respect to a probability measure $Q$, on some measure space $\Xset$. For any $t\in (0,1)$, we have
\[t \KL{P_1}{Q} + (1-t)\KL{P_0}{Q} \geq \KL{tP_1 + (1-t)P_0}{Q} + \frac{t(1-t)}{2}\|P_1-P_0\|_\tv^2.\]
\end{lemma}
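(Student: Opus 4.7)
The plan is to reduce the statement to Pinsker's inequality via a convenient decomposition of the gap in KL-divergences. Set $P_t \eqdef tP_1 + (1-t)P_0$, and let $p_0,p_1,p_t,q$ denote densities with respect to some dominating measure $\mu$ (for instance $\mu = Q + P_0 + P_1$). The starting identity I would use is
\[
\log\frac{p_i}{q} \;=\; \log\frac{p_i}{p_t} + \log\frac{p_t}{q},\qquad i=0,1,
\]
which after multiplying by $p_i$ and integrating yields $\KL{P_i}{Q} = \KL{P_i}{P_t} + \int p_i \log(p_t/q)\,d\mu$. Taking the convex combination with weights $t$ and $1-t$ collapses the second integral to $\int p_t\log(p_t/q)\,d\mu = \KL{P_t}{Q}$, giving the key decomposition
\[
t\KL{P_1}{Q} + (1-t)\KL{P_0}{Q} - \KL{P_t}{Q} \;=\; t\KL{P_1}{P_t} + (1-t)\KL{P_0}{P_t}.
\]

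The next step is to apply the classical Pinsker inequality $\KL{P}{R}\geq 2\|P-R\|_\tv^2$ to each term on the right. Since $P_1 - P_t = (1-t)(P_1-P_0)$ and $P_0 - P_t = -t(P_1-P_0)$, total variation scales as $\|P_1-P_t\|_\tv = (1-t)\|P_1-P_0\|_\tv$ and $\|P_0-P_t\|_\tv = t\|P_1-P_0\|_\tv$. Plugging these in yields
\[
t\KL{P_1}{P_t} + (1-t)\KL{P_0}{P_t} \;\geq\; 2\bigl[t(1-t)^2 + (1-t)t^2\bigr]\|P_1-P_0\|_\tv^2 \;=\; 2t(1-t)\|P_1-P_0\|_\tv^2,
\]
which is in fact four times stronger than the claimed lower bound $\tfrac{t(1-t)}{2}\|P_1-P_0\|_\tv^2$, so the desired inequality follows a fortiori.

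There is no real obstacle here: the only subtlety is to handle the case where $\KL{P_1}{Q}$ or $\KL{P_0}{Q}$ is infinite (the inequality is then trivial), and to justify the decomposition when the densities may vanish, which is standard using the convention $0\log 0 = 0$ and by restricting to the support of $P_t$ where needed. If one wanted to recover the sharp constant $2t(1-t)$ the same proof suffices; the weaker constant in the statement is retained presumably because it is what is invoked in the proof of Theorem \ref{thm:VA}.
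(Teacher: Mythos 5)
Your proof is correct, but it follows a genuinely different route from the paper's. You use the compensation identity
\[
t\,\KL{P_1}{Q} + (1-t)\,\KL{P_0}{Q} - \KL{P_t}{Q} \;=\; t\,\KL{P_1}{P_t} + (1-t)\,\KL{P_0}{P_t},
\]
(which is legitimate here: since $p_t\geq t p_1$ and $p_t\geq (1-t)p_0$, the terms $\KL{P_i}{P_t}$ are finite, so the splitting of the integrals causes no $\infty-\infty$ trouble), and then invoke Pinsker on each term together with $\|P_1-P_t\|_\tv=(1-t)\|P_1-P_0\|_\tv$, $\|P_0-P_t\|_\tv=t\|P_1-P_0\|_\tv$. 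The paper instead proves the bound from scratch (following Pinelis): it Taylor-expands $h(x)=x\log x$ around $f_t$ with integral remainder, re-parametrizes the intermediate points as mixture densities $f_{s_0(\alpha,t)}$, $f_{s_1(\alpha,t)}$, and lower-bounds the resulting $\chi^2$-type quantities via Cauchy--Schwarz, $\int (f_1-f_0)^2/f_s\,\rmd Q \geq \|P_1-P_0\|_\tv^2$, before integrating in $\alpha$. What each buys: your argument is shorter and modular, outsourcing the analytic work to a classical inequality, and (under the paper's normalization $\|\mu-\nu\|_\tv=\sup_A(\mu(A)-\nu(A))$) it delivers the stronger constant $2t(1-t)$, which of course implies the stated $\tfrac{t(1-t)}{2}$; the paper's argument is self-contained (no appeal to Pinsker) and exhibits the underlying $\chi^2$ lower bound explicitly, which is the mechanism behind the strong convexity. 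Since the lemma is only used as a lower bound in the proof of Theorem \ref{thm:VA}, either constant suffices, and your handling of the degenerate cases (infinite KL, vanishing densities, $0\log 0=0$) is adequate.
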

\begin{proof}
For $j=0,1$, set $f_j = \rmd P_j/\rmd Q$. For $t\in [0,1]$, set $f_t = t f_1 + (1-t)f_0$, and $P_t(\rmd u) = f_t(u)Q(\rmd u)$. Set $h(x)=x\log(x)$, $x\geq 0$.   By Taylor expansion with integral remainder, for $j\in\{0,1\}$, $t\in [0,1]$, and $x\in\Xset$,  we have
\begin{multline*}
h(f_j(u)) = h(f_t(u)) + \left(f_j(u)-f_t(u)\right) h'(f_t(u)) \\
+\left(f_j(u)-f_t(u)\right)^2\int_0^1h^{''}\left((1-\alpha)f_t(u)+\alpha f_j(u)\right)(1-\alpha)\rmd\alpha.\end{multline*}
 $h'(x)=\log(x)-1$, and $h^{''}(x)=1/x$, so that
 \begin{multline}\label{KL:sc:eq1}
 th(f_1(u)) + (1-t)h(f_0(u)) -h(f_t(u)=t(1-t)\left(f_1(u)-f_0(u)\right)^2\\
 \times\int_0^1\left[\frac{t}{(1-\alpha)f_t(u)+\alpha f_0(u)} + \frac{1-t}{(1-\alpha)f_t(u)+\alpha f_1(u)}\right](1-\alpha)\rmd\alpha.\end{multline}
 We can write $(1-\alpha)f_t(u)+\alpha f_0(u) = f_{s_0(\alpha,t)}(u)$, where $s_0(\alpha,t) = (1-\alpha)t$. Similarly, $(1-\alpha)f_t(u)+\alpha f_1(u) = f_{s_1(\alpha,t)}$, where $s_1(\alpha,t) = \alpha + t(1-\alpha)$. Using these expressions, and integrating both sides of (\ref{KL:sc:eq1}) gives
 \begin{multline*}
 t \KL{P_1}{Q} + (1-t)\KL{P_0}{Q} -\KL{P_t}{Q} \\
 = t(1-t)\int_0^1(1-\alpha)\left[ t\int\frac{(f_1(u)-f_0(u))^2}{f_{s_0(\alpha,t)}(u)}Q(\rmd u) + (1-t) \int\frac{(f_1(u)-f_0(u))^2}{f_{s_1(\alpha,t)}(u)}Q(\rmd u)\right]\rmd\alpha.\end{multline*}
 For any $s\in (0,1)$,
 \begin{multline*}
 \int\frac{(f_1(u)-f_0(u))^2}{f_s(u)}Q(\rmd u) = \frac{1}{(1-s)^2}\int\frac{(f_1(u)-f_s(u))^2}{f_s(u)}Q(\rmd u)\\
 =\frac{1}{(1-s)^2}\int\left(\frac{f_1(u)}{f_s(u)} -1\right)^2 f_s(u)Q(\rmd u)\geq \frac{1}{(1-s)^2}\left[\int\left|\frac{f_1(u)}{f_s(u)} -1\right| Q_s(\rmd u)\right]^2 \\
 =\frac{1}{(1-s)^2}\|P_s-P_1\|_\tv^2 = \|P_1-P_0\|_\tv^2.
 \end{multline*}
 We conclude that
 \begin{multline*} t \KL{P_1}{Q} + (1-t)\KL{P_0}{Q} -\KL{P_t}{Q}\\
 \geq t(1-t)\|P_1-P_0\|_\tv^2\int_0^1\alpha(1-\alpha)\rmd\alpha = \frac{t(1-t)}{2}\|P_1-P_0\|_\tv^2,\end{multline*}
as claimed. 
\end{proof}

The following deviation bound is known as the Hanson-Wright inequality. This version is taken from (\cite{vershynin:18}).

\begin{lemma}\label{lem:HW}
Let $X = (X_1,\ldots,X_n)$ be a random vector with independent mean zero components. 
%Let $A\in\rset^{n\times n}$. For all $t\geq 0$,
%\[\PP\left(\left|X'AX -\PE(X'AX)\right|>t\right) \leq 2\exp\left[-c\min\left(\frac{t^2}{K^4\|A\|_{\textsf{F}}^2},\frac{t}{K^2\|A\|_2}\right)\right],\]
%where $K=\max_i\|X_i\|_{\psi_2}$, and $c$ is some absolute constant.
Suppose that  there exists $\sigma>0$ such that for all unit-vector $u\in\rset^n$, and all $t\geq 0$, $\PP(|\pscal{u}{X}|>t) \leq 2e^{-t^2/(2\sigma^2)}$. Then for all $t\geq 6$, it holds
\begin{equation}\label{eq:HW}
\PP\left[X'AX > (4+t)\sigma^2n\lambda_{\textsf{max}}(A)\right] \leq e^{-\frac{ct n}{6}},\end{equation}
for some absolute constant $c$. In the particular case where $X\sim\textbf{N}_n(0,I_n)$, $\sigma=1$, and we can take $c=3$.
\end{lemma}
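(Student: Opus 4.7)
The strategy is to reduce the statement to the classical Hanson--Wright inequality for centered quadratic forms, and then absorb $\E[X'AX]$ into the deterministic prefactor on the right-hand side.

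First, I would apply the tail assumption with $u=e_i$, obtaining $\PP(|X_i|>t)\leq 2e^{-t^2/(2\sigma^2)}$, so each component $X_i$ is $\sigma$-sub-Gaussian with sub-Gaussian Orlicz norm at most $K\sigma$ for an absolute constant $K$. I would then invoke the standard Hanson--Wright inequality (Vershynin (2018), Theorem~6.2.1): for every symmetric $A$ and $s\geq 0$,
\[
\PP\bigl(|X'AX - \E[X'AX]|>s\bigr)\leq 2\exp\Bigl(-c_0\min\bigl(\tfrac{s^2}{K^4\sigma^4\|A\|_{\textsf{F}}^2},\,\tfrac{s}{K^2\sigma^2\|A\|_2}\bigr)\Bigr),
\]
for some absolute constant $c_0>0$.

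Next, I would bound the three ingredients entering that inequality. From the tail bound on $|X_i|$, $\E[X_i^2]=\int_0^\infty 2t\,\PP(|X_i|>t)\,\rmd t\leq 4\sigma^2$; combined with independence this yields $\E[X'AX]=\sum_i A_{ii}\E[X_i^2]\leq 4\sigma^2\,\textsf{Tr}(A)\leq 4\sigma^2 n\lambda_{\textsf{max}}(A)$, where the last step uses that $A$ is positive semi-definite (which holds in the intended application of the lemma to an orthogonal projector). For the matrix norms I would use $\|A\|_2=\lambda_{\textsf{max}}(A)$ and $\|A\|_{\textsf{F}}^2=\sum_i\lambda_i(A)^2\leq n\lambda_{\textsf{max}}(A)^2$. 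Taking $s=t\sigma^2 n\lambda_{\textsf{max}}(A)$ then gives $\E[X'AX]+s\leq (4+t)\sigma^2 n\lambda_{\textsf{max}}(A)$, while the exponent in the Hanson--Wright bound becomes at least $(c_0/K^2)\min(t^2 n,\,tn)=(c_0/K^2)tn$ for $t\geq 1$. The threshold $t\geq 6$ is then used to absorb the leading factor of $2$ into the exponent by mildly enlarging the constant, producing a bound of the form $e^{-ctn/6}$ for some absolute $c>0$.

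For the Gaussian specialization I would take $\sigma=1$ and diagonalize $A$ to write $X'AX=\sum_i\lambda_i(A)g_i^2$ with $g_i$ i.i.d.\ standard normal; the Laurent--Massart deviation inequality for chi-square sums then yields the sharp constants $4$ and $c=3$ directly. The main obstacle I anticipate is not conceptual but the bookkeeping of absolute constants; the cleanest route is to treat Theorem~6.2.1 of Vershynin (2018) as a black box and to track constants explicitly only through the mean-bounding and minimum-selection steps.
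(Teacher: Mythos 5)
The paper never proves Lemma~\ref{lem:HW}: it is stated as a known form of the Hanson--Wright inequality and simply cited to Vershynin (2018), so your job was to reconstruct a proof from scratch. For the general sub-Gaussian part your route is the natural one and is essentially sound: taking $u=e_i$ gives $\|X_i\|_{\psi_2}\le C\sigma$, the centered Hanson--Wright inequality with $\|A\|_2=\lambda_{\textsf{max}}(A)$, $\normfro{A}^2\le n\lambda_{\textsf{max}}(A)^2$ and $s=t\sigma^2 n\lambda_{\textsf{max}}(A)$ yields a bound $2\exp(-c_1tn)$ for $t\ge1$, and $\PE[X'AX]=\sum_iA_{ii}\PE[X_i^2]\le 4\sigma^2 n\lambda_{\textsf{max}}(A)$ (you only need $A_{ii}\le\lambda_{\textsf{max}}(A)\ge0$, not full positive semidefiniteness). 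Two bookkeeping caveats: absorbing the factor $2$ means \emph{shrinking} $c$, not enlarging it, and if the absolute constant $c_1$ happens to be small this absorption can fail at the boundary $tn=6$; the clean patch there is the Markov bound $\PP(X'AX>(4+t)\sigma^2n\lambda_{\textsf{max}}(A))\le 4/(4+t)\le 2/5$ on the bounded range of $tn$. Since the lemma only claims \emph{some} absolute $c$, this part is acceptable.

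The genuine gap is the Gaussian specialization with $c=3$. Laurent--Massart gives $\PP(\chi^2_n\ge n+2\sqrt{nx}+2x)\le e^{-x}$, and to reach the exponent $x=tn/2$ you would need $1+\sqrt{2t}+t\le 4+t$, i.e.\ $t\le 4.5$, which contradicts $t\ge6$; for every finite $t$ the admissible $x$ is strictly below $tn/2$ (at $t=6$ one gets $x\approx2.82\,n<3n$), so it does not deliver $c=3$ ``directly.'' In fact no argument can: for $A=I_n$, $\sigma=1$, the large-deviation rate of $\chi^2_n/n$ at level $4+t$ is $\tfrac12\left(t+3-\log(4+t)\right)$, which is strictly smaller than $t/2$ as soon as $t>e^3-4\approx16.1$, so by Cram\'er's lower bound the claimed inequality $e^{-tn/2}$ fails for such $t$ and large $n$. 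What you can honestly prove is either $c=3$ on the restricted range $6\le t\le e^3-4$, via the chi-square Chernoff bound $\PP(\chi^2_n>an)\le \exp\left(-\tfrac n2(a-1-\log a)\right)$ with $a=4+t$, or some absolute $c<3$ uniformly in $t\ge6$; either version suffices for the way the lemma is invoked in the proof of Corollary~\ref{coro:lm}. This also flags that the constant $c=3$ in the lemma as stated in the paper is itself inaccurate, which your write-up should note rather than claim to recover.
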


We will also need the following lemma on determinants of sub-matrices.
\begin{lemma}\label{lem:interlacing}
If symmetric positive definite matrices $A,M$ and $D\in\rset^{q\times q}$ are such that $M=\left(\begin{array}{cc}A & B\\B'&D\end{array}\right)$, then 
\[\det(A) \lambda_{\textsf{min}}(M)^q \leq \det(M) \leq \det(A) \lambda_{\textsf{max}}(M)^q.\]
\end{lemma}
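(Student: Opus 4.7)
The plan is to use the block-determinant identity and bound the eigenvalues of the Schur complement of $A$ in $M$. Since $M$ is spd and $A$ is its upper-left principal block, $A$ is itself spd and hence invertible; the Schur-complement factorization
\[
M = \begin{pmatrix} I & 0 \\ B'A^{-1} & I \end{pmatrix} \begin{pmatrix} A & 0 \\ 0 & S \end{pmatrix} \begin{pmatrix} I & A^{-1}B \\ 0 & I \end{pmatrix},
\]
with $S \eqdef D - B'A^{-1}B$, is valid and yields $\det(M) = \det(A)\det(S)$. The lemma therefore reduces to showing $\lambda_{\min}(M)^q \leq \det(S) \leq \lambda_{\max}(M)^q$. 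Since $S$ is spd of size $q$, this follows once every eigenvalue of $S$ is pinched between $\lambda_{\min}(M)$ and $\lambda_{\max}(M)$.

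For the lower bound on the spectrum of $S$, I would use an embedding argument. Given any unit vector $y \in \rset^q$, set $x \eqdef -A^{-1}By$. A direct expansion of $(x,y)'M(x,y)$ collapses the cross terms and gives $(x,y)'M(x,y) = y'Sy$, while $\|(x,y)\|_2^2 = \|x\|_2^2 + 1 \geq 1$. Consequently $y'Sy \geq \lambda_{\min}(M)\|(x,y)\|_2^2 \geq \lambda_{\min}(M)$, giving $\lambda_{\min}(S) \geq \lambda_{\min}(M)$ and hence $\det(S) \geq \lambda_{\min}(M)^q$.

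For the upper bound, note that $A^{-1} \succ 0$ implies $B'A^{-1}B \succeq 0$, so $S \preceq D$ in the Loewner order; in particular $\lambda_{\max}(S) \leq \lambda_{\max}(D)$. Since $D$ is a principal submatrix of $M$, the Cauchy interlacing theorem (the namesake of the lemma) gives $\lambda_{\max}(D) \leq \lambda_{\max}(M)$, so $\lambda_{\max}(S) \leq \lambda_{\max}(M)$ and therefore $\det(S) \leq \lambda_{\max}(M)^q$.

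There is essentially no hard step here: the argument is standard linear algebra. The only things to watch are that the Schur complement exists (ensured by $A \succ 0$) and that the stated dimension ``$A \in \rset^{q\times q}$'' in the lemma appears to be a typographical slip — the proof is insensitive to the size of $A$ and only requires the lower-right block $D$ to be $q \times q$, as used in the application inside the proof of Theorem \ref{thm:sel}.
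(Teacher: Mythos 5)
Your proof is correct, but it takes a genuinely different route from the paper. The paper disposes of Lemma \ref{lem:interlacing} in one line by citing Cauchy's interlacing theorem (\cite{horn:johnson}, Theorem 4.3.17): writing the eigenvalues of $M$ as $\mu_1\geq\cdots\geq\mu_{m+q}$ and those of the principal block $A$ as $\alpha_1\geq\cdots\geq\alpha_m$, interlacing gives $\mu_{i+q}\leq\alpha_i\leq\mu_i$, and multiplying these inequalities over $i$ yields $\det(M)/\lambda_{\max}(M)^q\leq\det(A)\leq\det(M)/\lambda_{\min}(M)^q$, which is the statement. You instead factor $\det(M)=\det(A)\det(S)$ through the Schur complement $S=D-B'A^{-1}B$ and pinch the spectrum of $S$: the substitution $x=-A^{-1}By$ collapsing $(x,y)'M(x,y)$ to $y'Sy$ is exactly right and gives $\lambda_{\min}(S)\geq\lambda_{\min}(M)$, while $S\preceq D$ together with $\lambda_{\max}(D)\leq\lambda_{\max}(M)$ (which is just the Rayleigh-quotient bound $y'Dy=(0,y)'M(0,y)\leq\lambda_{\max}(M)$, so you do not even need the full interlacing theorem) gives the other side. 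What each approach buys: the paper's argument is shorter once the interlacing theorem is granted; yours is essentially self-contained, relying only on the variational characterization of extreme eigenvalues, and it dovetails with the Schur-complement identities the paper already exploits elsewhere (e.g.\ in bounding $\det(\mathcal{I}_{\delta_\star})/\det(\mathcal{I}_\delta)$ in the proof of Theorem \ref{thm:sel}). You are also right that the ``$A,M,D\in\rset^{q\times q}$'' phrasing is a typographical slip: only $D$ need be $q\times q$, and both proofs are insensitive to the size of $A$.
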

\begin{proof}
This follows from Cauchy's interlacing property for eigenvalues. See for instance \cite{horn:johnson} Theorem 4.3.17.
\end{proof}

\section{Algorithms for linear regression models}
\label{append:algo}
Both algorithms are initialized from the lasso solution and its support. The VA also needs an initial value of the matrix $C$ which we take as $(c/n) I_p$, with $c= 0.001$.

\begin{algorithm}[Gibbs sampler for (\ref{post:linmod})]\label{algo:gibbs}
At the $k$-th iteration, given $(\delta^{(k)},\theta^{(k)})$:
\begin{enumerate}
\item For all $j$ such that $\delta^{(k)}_j=0$, draw $\theta_j^{(k+1)}\sim \textbf{N}(0,\rho_0^{-1})$. Then draw jointly $[\theta^{(k+1)}]_\delta\sim \textbf{N}(m^{(k)},\Sigma^{(k)})$, where
\[m^{(k)} = \left(X_{\delta^{(k)}}'X_{\delta^{(k)}} + \sigma^2\rho_1 I_{\|\delta^{(k)}\|_0}\right)^{-1}X_{\delta^{(k)}}'z,\;\; \Sigma^{(k)} =\sigma^2\left(X_{\delta^{(k)}}'X_{\delta^{(k)}} + \sigma^2\rho_1 I_{\|\delta^{(k)}\|_0}\right)^{-1} .\]
\item \begin{enumerate}
\item Given $\theta^{(k+1)}=\theta$, set $\delta^{(k+1)}=\delta^{(k)}$, and repeat for $j=1,\ldots,p$.  Draw $\iota\sim\textbf{Ber}(0.5)$. If $\delta^{(k)}_j =0$, and $\iota=1$, with probability  $\min(1,A_j)/2$ change $\delta^{(k+1)}_j$ to $\iota$. If $\delta^{(k)}_j =1$, and $\iota=0$, with probability $\min(1,A_j^{-1})/2$, change $\delta^{(k+1)}_j$ to $\iota$; where
\[ A_j  = \frac{\textsf{q}}{1-\textsf{q}}\sqrt{\frac{\rho_1}{\rho_0}} e^{-\left(\rho_1-\rho_0\right)\frac{\theta_j^2}{2}}e^{-\frac{\theta_j^2}{2\sigma^2}\|X_j\|_2^2 +\frac{\theta_j}{\sigma^2}\left(\pscal{X_j}{Y}-\sum_{i:\;\delta^{(k+1)}_i=1,\;i\neq j} \theta_i\pscal{X_j}{X_i}\right)}.\] 
\end{enumerate}
\end{enumerate}
\end{algorithm}

\medskip
%
%\begin{algorithm}[Skinny VA approximation for (\ref{post:linmod})]
%Given $\alpha^{(k)}, \mu^{(k)}$, and $(v^2)^{(k)}$
%\begin{enumerate}
%\item 
%\begin{enumerate}
%\item Set $\bar\alpha = \alpha^{(k)}$. For $j=1,\ldots,p$  update $\bar\alpha_j$ as  $\bar\alpha_j=\frac{1}{1+R_j}$, where
%\[R_j =\frac{1-\textsf{q}}{\textsf{q}}\sqrt{\frac{\rho_0}{\rho_1}}e^{\left(\rho_1-\rho_0\right)\frac{\widehat{\theta_j^2}}{2}}e^{\frac{1}{2\sigma^2}\left(\widehat{\theta_j^2}\|X_j\|_2^2 -2\mu_j^{(k)}\pscal{X_j}{y-\sum_{i\neq j} \mu^{(k)}_i\bar\alpha_i X_i}\right)}, \]
%where $\widehat{\theta_j^2} = (\mu_j^{(k)})^2 + (v_j^2)^{(k)}$.
%\item Set $\alpha^{(k+1)}=\bar\alpha$.
%\end{enumerate}
%\item 
%\begin{enumerate}
%\item Set $\bar \mu=\mu^{(k)}$. For $j=1,\ldots,p$, set 
%\[(v^2)_j^{(k+1)} = \frac{1}{\left(\rho_1 + \frac{\|X_j\|_2^2}{\sigma^2}\right)\alpha_j^{(k+1)} + \rho_0(1-\alpha_j^{(k+1)}) },\]
%and update $\bar\mu_j$ to 
%\[ \bar\mu_j = \frac{(v^2)_j^{(k+1)}}{\sigma^2}\alpha_j^{(k+1)}\pscal{X_j}{y-\sum_{i\neq j}\alpha_i^{(k+1)}\bar\mu_iX_i}.\]
%\item Set $\mu^{(k+1)}=\bar\mu$.
%\end{enumerate}
%\end{enumerate}
%\end{algorithm}
%
%\medskip

\begin{algorithm}[Midsize VA approximation for (\ref{post:linmod}) using template $\delta^{(\textsf{i})}$]
Given $\alpha^{(k)}, \mu^{(k)}$, and $C^{(k)}$
\begin{enumerate}
\item 
\begin{enumerate}
\item Set $\bar\alpha = \alpha^{(k)}$. For $j=1,\ldots,p$  update $\bar\alpha_j$ as  $\bar\alpha_j=\frac{1}{1+R_j}$, where
\[R_j =\frac{1-\textsf{q}}{\textsf{q}}\sqrt{\frac{\rho_0}{\rho_1}}e^{\left(\rho_1-\rho_0\right)\frac{\widehat{\theta_j^2}}{2}}e^{\frac{1}{2\sigma^2}\left[\widehat{\theta_j^2}\|X_j\|_2^2 -2\mu_j^{(k)}\pscal{X_j}{y-\sum_{i\neq j}\mu^{(k)}_i\bar\alpha_{i}X_i} + S_j\right]}, \]
where $\widehat{\theta_j^2} = (\mu_j^{(k)})^2 + C_{jj}^{(k)}$, and $S_j = 2\sum_{i\neq j} \bar\alpha_iC_{ij}\pscal{X_j}{X_i}$.
\item Set $\alpha^{(k+1)}=\bar\alpha$.
\end{enumerate}
\item 
\begin{enumerate}
\item For each $j$ such that $\delta^{(\textsf{i})}_j=0$, set
\[C_{jj}^{(k+1)} = \frac{1}{\left(\rho_1 + \frac{\|X_j\|_2^2}{\sigma^2}\right)\alpha_j^{(k+1)} + \rho_0(1-\alpha_j^{(k+1)}) },\]
and 
\[\mu_j = \frac{C_{jj}^{(k+1)}}{\sigma^2}\alpha_j^{(k+1)}\pscal{X_j}{y-\sum_{i\neq j}\alpha_i^{(k+1)}\bar\mu_iX_i}.\]
\item If $\|\delta^{(\textsf{i})}\|_0>0$ do the following. Set $\tilde y= y -\sum_{j:\delta^{(\textsf{i})}_j=0} \alpha^{(k+1)}_j\mu^{(k+1)}_j X_j$. Form the matrix  $M\in\rset^{p\times p}$ such that $M_{ij}= \alpha_i^{(k+1)}\|X_i\|_2^2$, if $i=j$, and $M_{ij}= \alpha_i^{(k+1)}\alpha_j^{(k+1)}\pscal{X_i}{X_j}$ if $i\neq j$. Let $\Lambda\in\rset^{p\times p}$ be the diagonal matrix such that $\Lambda_{jj} = \alpha_j^{(k+1)}\rho_1 + \rho_0(1-\alpha_j^{(k+1)})$. Then we update $C^{(k)}$ to 
\[[C^{(k+1)}]_{\delta^{(\textsf{i})},\delta^{(\textsf{i})}} = \left(\left[\Lambda + \frac{1}{\sigma^2}M\right]_{\delta^{(\textsf{i})},\delta^{(\textsf{i})}}\right)^{-1},\]
and we update $\mu^{(k)}$ to 
\[[\mu^{(k+1)}]_{\delta^{(\textsf{i})}} = \left([C^{(k+1)}]_{\delta^{(\textsf{i})},\delta^{(\textsf{i})}}\right)\left[\mathsf{diag}(\alpha^{(k+1)})\right]_{\delta^{(\textsf{i})},\delta^{(\textsf{i})}} X_{\delta^{(\textsf{i})}}' \tilde y,\]
where $\mathsf{diag}(\alpha^{(k+1)})$ is the diagonal matrix with diagonal given by $\alpha^{(k+1)}$.
\end{enumerate}
\end{enumerate}
\end{algorithm}
\begin{remark}
Setting $\delta^{(\textsf{i})}={\bf 0}_p$ in the algorithm above yields the mean field variational approximation (skinny-VA). And taking $\delta^{(\textsf{i})}$ as the vector will all components equal to $1$ yields the full variational approximation (full-VA).
\end{remark}

\end{document}